
\documentclass[10pt,11pt]{amsart}%
\usepackage[dvips]{graphicx}
\usepackage{amsmath}
\usepackage{color}
\usepackage{amsfonts}
\usepackage{amssymb}
\usepackage[margin=.6in]{geometry}
\usepackage{tikz}
\usepackage{young}%
\setcounter{MaxMatrixCols}{30}
\providecommand{\U}[1]{\protect\rule{.1in}{.1in}}
\usetikzlibrary{matrix,arrows,decorations.pathmorphing,automata, matrix, positioning, calc, shapes.multipart}
\providecommand{\U}[1]{\protect\rule{.1in}{.1in}}
\providecommand{\U}[1]{\protect\rule{.1in}{.1in}}
\providecommand{\U}[1]{\protect\rule{.1in}{.1in}}
\providecommand{\U}[1]{\protect\rule{.1in}{.1in}}
\providecommand{\U}[1]{\protect\rule{.1in}{.1in}}
\providecommand{\U}[1]{\protect\rule{.1in}{.1in}}

\newtheorem{theorem}{Theorem}[section]

\newtheorem{corollary}[theorem]{Corollary}

\newtheorem{definition}[theorem]{Definition}
\newtheorem{example}[theorem]{Example}

\newtheorem{lemma}[theorem]{Lemma}

\newtheorem{proposition}[theorem]{Proposition}
\newtheorem{remark}[theorem]{Remark}

\setcounter{tocdepth}{2}
\begin{document}
\title[Keys and Demazure crystals]{Keys and Demazure crystals for Kac-Moody algebras\\ }
\date{October, 2019}
\author{Nicolas Jacon and C\'{e}dric Lecouvey}
\address{N.J.: Universit\'{e} de Reims Champagne-Ardennes, UFR Sciences exactes et
naturelles Laboratoire de Math\'{e}matiques FRE 2011 Moulin de la Housse BP
1039 51100 REIMS.\\
C. L.: Institut Denis Poisson UMR CNRS 7013 Facult\'{e} des Sciences et
Techniques, Universit\'{e} Fran\c{c}ois Rabelais Parc de Grandmont}

\begin{abstract}
The Key map is an important tool in the determination of the Demazure crystals
associated to Kac-Moody algebras. In finite type A, it can be computed in the
tableau realization of crystals by a simple combinatorial procedure due to
Lascoux and Sch\"{u}tzenberger.\ We show that this procedure is a part of a
more general construction holding in the Kac-Moody case that we illustrate in
finite types and affine type A.  In affine type A, we introduce higher level
generalizations of core partitions which are expected to play an important role 
 in the representation theory of Ariki-Koike algebras.

\end{abstract}
\maketitle

\section{Introduction}

Kac-Moody algebras are infinite-dimensional analogues of semisimple Lie
algebras. Their classification is based on the notion of Cartan datum, a
generalization of the finite root systems. In particular, a Kac-Moody algebra
$\mathfrak{g}$ admits an enveloping algebra $U(\mathfrak{g})$, a Weyl group
$W$, a weight lattice $P$ and a cone $P_{+}$ of dominant weights. To each
dominant weight $\lambda$ is associated a highest weight $U(\mathfrak{g}%
)$-module $V(\lambda)$. The works of Kashiwara, Lusztig and Littelmann during
the 90's have shown the existence of a fundamental object associated to
$V(\lambda)$: the crystal $B(\lambda)$. It is an oriented graph whose
combinatorics encodes many informations on $V(\lambda)$. In particular, it is
endowed with a weight function with values in $P$ whose generating series over
$B(\lambda)$ coincides with the character of $V(\lambda)$ (see \cite{kash} and
the references therein). The graph $B(\lambda)$ admits a unique source vertex
$b_{\lambda}$ (its highest weight vertex) and there is a simple action of the
Weyl group $W$ on $B(\lambda)$. Also for $\lambda,\mu$ in $P_{+}$, the crystal
$B(\lambda)\otimes B(\mu)=\{b\otimes b^{\prime}\mid b\in B(\lambda),b^{\prime
}\in B(\mu)\}$ of the tensor product $V(\lambda)\otimes V(\mu)$ can be easily
computed from $B(\lambda)$ and $B(\mu)$.\ In particular, $b_{\lambda}\otimes
b_{\mu}$ is of highest weight $\lambda+\mu$ in $B(\lambda)\otimes B(\mu
)$.\ The crystals with highest weight vertices $b_{\lambda}\otimes b_{\mu}$
and $b_{\mu}\otimes b_{\lambda}$ in $B(\lambda)\otimes B(\mu)$ and
$B(\mu)\otimes B(\lambda)$ are then isomorphic. The corresponding isomorphism
can be regarded as the restriction of more general isomorphisms between
$B(\lambda)\otimes B(\mu)$ and $B(\mu)\otimes B(\lambda)$ called combinatorial
$R$-matrices.

The Demazure modules $V(\lambda)_{w}$ are $U^{+}(\mathfrak{g})$-submodules of
$V(\lambda)$ defined for any $w\in W$. Quite remarkably, each such Demazure
module $V(\lambda)$ also admits a crystal $B_{w}(\lambda)$ which is a subgraph
of $B(\lambda)$. It has been proved by Littelmann that the generating series
of the weight function over $B_{w}(\lambda)$ gives the Demazure character of
$B_{w}(\lambda)$. Given the crystal $B(\lambda)$, it is a natural question to
ask whether a vertex $b$ in $B(\lambda)$ belongs to a Demazure crystal
$B_{w}(\lambda)$. This problem may be solved by using a combinatorial
procedure which involves the computation of a certain map called the right Key
map. This map associates to each vertex $b$ of $B(\lambda)$ an element
$K^{R}(b)$ in the orbit $O(\lambda)$ of $b_{\lambda}$ under the action of $W$.
The Key map can be computed in any realization of the abstract crystal
$B(\lambda)$ but has a great combinatorial complexity.\ Observe also that the
algebra $U^{+}(\mathfrak{g})$ admits a crystal $B(\infty)$ with Demazure
crystals $B_{w}(\infty),w\in W$ and associated Key maps.

In finite type $A$ (i.e. for the Lie algebras $\mathfrak{sl}_{n}$), the
dominant weights $\lambda$ can be regarded as partitions and each crystal
$B(\lambda)$ has a simple realization in terms of semistandard tableaux of
shape $\lambda$.\ In \cite{Las1}, Lascoux and Sch\"{u}tzenberger defined a
simple procedure associating to such a tableau a \textquotedblleft
Key\textquotedblright\ tableau defined as a semistandard tableau such that
each column of height $h$ is included in any column of height $h^{\prime}\geq
h$. They then showed that these Key tableaux permit to compute the Demazure
characters. By using the Littelmann path model and the dilatation of crystals
introduced by Kashiwara, one can then prove that the Key map defined in
\cite{Las1} can be recover from the previous general definition when the
crystals $B(\lambda)$ are realized in terms of semistandard tableaux.

\bigskip

The main goal of this paper is to give a general reduction procedure to
compute the Key map for any Kac-Moody algebra. Our strategy is to show that
the approach of Lascoux and Sch\"{u}tzenberger can be generalized to any
crystal $B(\lambda)$ associated to any Kac-Moody algebra. More precisely, we
explain how the Key map $K^{R}$ can be computed for any weight $\lambda$,
recursively on $\lambda$ essentially by reduction to the case of the
fundamental weights. In this perspective, the Demazure crystals can be
characterized by the Key map for the fundamental weights, the previous
restrictions of combinatorial $R$-matrices and the description of the strong
Bruhat order on cosets of $W$. In particular, in finite type $A$, the Key map
for a fundamental weight is the identity, the combinatorial $R$-matrices can
be computed on tableaux by the Jeu de Taquin procedure and the strong Bruhat
order is easy to describe. Thus one recovers the results of \cite{Las1}. For
the classical types and for type $G_{2}$, there are analogue simple tableaux
models and we then illustrate our general procedure by giving natural
extension of Lascoux-Sch\"{u}tzenberger's construction. They might also be
adapted to the remaining exceptional cases based on the \textquotedblleft
tableaux\textquotedblright\ existing model for crystals (see \cite{BS}).\ This
suggests that recent results by Brubaker and al. \cite{B3G}, Masson \cite{Mas}
and Proctor \cite{Proct} for type $A$ might have generalizations in finite
types. Note that we were informed during the redaction of this paper that
Santos \cite{San} also simultaneously got the description of the Key in type
$C$. His approach, based on the symplectic plactic monoid, is nevertheless
distinct from ours. It is also worth mentioning that the Key map can be
computed as the last direction of paths in the alcove path model \cite{Len1}
and there exist crystal isomorphisms \cite{LenLub} between this model and the
tableaux model of Kashiwara and Nakashima.\ We next focus on the affine type
$A_{e-1}^{(1)}$ for which there also exists an interesting crystal model using
multipartitions and related to the modular representation theory of
Ariki-Koike algebras (some generalizations of the Hecke algebras). When
$\lambda=\omega_{i}$ is a fundamental weight, $O(\omega_{i})$ is parametrized
by particular partitions called $e$-cores and the Key map can be computed
thanks to a combinatorial procedure introduced in \cite{A}. Also the strong
Bruhat order on $O(\omega_{i})$ corresponds to the inclusion of the Young
diagrams of the $e$-cores. Thus, we can apply the previous reduction. Along
the way, we introduce higher level generalizations of the core partitions
which give interesting analogues of the Young lattice and which 
 have a nice interpretation in terms of the block theory of  Ariki-Koike algebras
\cite{JLw}. Let us conclude by mentioning there are also quite simple
combinatorial models for the highest weight crystals in any affine type (see
for example \cite{HK}).\ It would be interesting to have a combinatorial
description of the key maps and the $R$-matrices for the fundamental weights
in this setting.

\bigskip

The paper is organized as follows. Section 2 is a recollection of basics facts
on crystals and Demazure characters.\ In Section 3, we present the previous
recursive procedure to compute the Key map.\ We explain how it can be used for
Demazure crystals associated to finite types in Section 4. The affine type $A$
case is studied in Section 5 where we introduce the notion of
$(e,\boldsymbol{s})$-core as a natural labelling of the orbit of the empty
multipartition in Uglov's and Kleshchev realizations of crystals.\ We also
describe the Key map on Kleshchev multipartitions.\ Finally, we explain how
our results on the the Demazure subcrystals in $B(\lambda)$ can be used to
characterize the Demazure subcrystals in $B(\infty)$.

\section{Background on Keys and Demazure crystals}

\subsection{Crystals for integrable modules over Kac-Moody algebras}

\subsubsection{Background on root systems and Kac-Moody algebras}

\label{subsec_KMA}Let $I$ be a finite set and $A=(a_{i,j})_{(i,j)\in I^{2}}$
be a generalized Cartan matrix of rank $r$.\ This means that the entries of
the matrix satisfy the following conditions

\begin{enumerate}
\item $a_{i,j}\in\mathbb{Z}$ for $i,j\in I^{2},$

\item $a_{i,i}=2$ for $i\in I^{2},$

\item $a_{i,j}=0$ if and only if $a_{j,i}=0$ for $i,j\in I^{2}$.
\end{enumerate}

We will also assume that $A$ is indecomposable: given subsets $I$ and $J$ of
$\{1,\ldots,n\}$, there exists $(i,j)\in I^{2}$ such that $a_{i,j}\not =%
0$.\ We refer to \cite{KacB} for the classification of indecomposable
generalized Cartan matrices.\ Recall there exist only three kinds of such
matrices: when all the principal minors of $A$ are positive, $A$ is of finite
type and corresponds to the Cartan matrix of a simple Lie algebra over
$\mathbb{C}$; when all the proper principal minors of $A$ are positive and
$\det(A)=0$ the matrix $A$ is said of\emph{ }affine type; otherwise $A$ is of
indefinite type. For technical reasons, from now on, we will restrict
ourselves to symmetrizable generalized Cartan matrices i.e. we will assume
there exists a diagonal matrix $D$ with entries in $\mathbb{Z}_{>0}$ such that
$DA$ is symmetric.

The root and weight lattices associated to a generalized symmetrizable Cartan
matrix are defined by mimicking the construction for the Cartan matrices of
finite type.\ Let $P^{\vee}$ be a free abelian group of rank $2\left\vert
I\right\vert -r$ with $\mathbb{Z}$-basis $\{h_{i}\mid i\in I\}\cup
\{d_{1},\ldots,d_{\left\vert I\right\vert -r}\}$.\ Set $\mathfrak{h}:=P^{\vee
}\otimes_{\mathbb{Z}}\mathbb{C}$ and $\mathfrak{h}_{\mathbb{R}}:=P^{\vee
}\otimes_{\mathbb{Z}}\mathbb{R}$. The weight lattice $P$ is then defined by
\[
P:=\{\gamma\in\mathfrak{h}^{\ast}\mid\gamma(P^{\vee})\subset\mathbb{Z\}}%
\text{.}%
\]
Set $\Pi^{\vee}:=\{h_{i}\mid i\in I\}$.$\;$One can then choose a set
$\Pi:=\{\alpha_{i}\mid i\in I\}$ of linearly independent vectors in
$P\subset\mathfrak{h}^{\ast}$ such that $\alpha_{i}(h_{j})=a_{i,j}$ for
$i,j\in I^{2}$ and $\alpha_{i}(d_{j})\in\{0,1\}$ for $i\in\{1,\ldots
,\left\vert I\right\vert -r\}$.\ The elements of $\Pi$ are the simple
roots.\ The free abelian group $Q:=\bigoplus_{i=1}^{\left\vert I\right\vert
}\mathbb{Z}\alpha_{i}$ is the root lattice.\ The quintuple $(A,\Pi,\Pi^{\vee
},P,P^{\vee})$ is called a generalized\emph{ }Cartan datum associated to the
matrix $A$.\ Let $P_{+}=\{\lambda\in P\mid\lambda(h_{i})\geq0$ for any $i\in
I\}$ be the set of dominant weights. For any $i\in I$, the\emph{ }fundamental
weight $\omega_{i}\in P$ is such that $\omega_{i}(h_{j})=\delta_{i,j}$ for
$j\in I$ and $\omega_{i}(d_{j})=0$ for $j\in\{1,\ldots,\left\vert I\right\vert
-r\}$.\ 

For any $i\in I$, we define the simple reflection $s_{i}$ on $\mathfrak{h}%
^{\ast}$ by
\begin{equation}
s_{i}(\gamma)=\gamma-h_{i}(\gamma)\alpha_{i}\ \text{for any }\gamma\in
P\text{.} \label{defSi}%
\end{equation}
The Weyl group $W$ is the subgroup of $GL(\mathfrak{h}^{\ast})$ generated by
the reflections $s_{i}$.\ This is a Coxeter group acting on the weight lattice
$P$ and we refer the reader to \cite{BB} for a complete exposition. In
particular, all the reduced decompositions of a fixed $w\in W$ have the same
length $\ell(w)$. In the sequel we shall need the following characterizations
of the strong Bruhat order $\trianglelefteq$ and the weak Bruhat order $\leq$
on $W$. Given $u$ and $v$ in $W$, we have

\begin{itemize}
\item $u\trianglelefteq v$ if and only if every reduced decomposition of $v$
admits a subword that is a reduced decomposition of $u$.

\item $u\leq v$ if and only if there are reduced decompositions of $u$ and $v$
such that $u$ is a suffix of $v$.
\end{itemize}

Of course, if $u\leq v$, we have $u\trianglelefteq v$ but the converse is not
true in general. For any dominant weight $\lambda$, write $W_{\lambda}$ for
the stabilizer of $\lambda$ under the action of $W$. Every $w\in W$ then
admits a unique decomposition on the form $w=p_{\lambda}(w)v$ with $v\in
W_{\lambda}$ and $p_{\lambda}(w)\in W$ of minimal length.\ Let us denote by
$W^{\lambda}$ the image of $W$ by the projection map $p_{\lambda}$. By setting
$J_{\lambda}=\{i\in I\mid s_{i}(\lambda)=\lambda\}$, we get that $u$ belongs
to $W^{\lambda}$ if and only if none of its reduced decompositions ends with a
generator $s_{i}$ such that $i\in J_{\lambda}$ (alternatively all its reduced
expressions ends with a generator $s_{i},i\notin J_{\lambda}$). Finally recall
that for any $w$ and $w^{\prime}$ in $W$, we have%
\begin{equation}
w\trianglelefteq w^{\prime}\Longrightarrow p_{\lambda}(w)\trianglelefteq
p_{\lambda}(w^{\prime}). \label{proj}%
\end{equation}
We have in fact the more precise lemma (which follows from Theorem $2.6.1$ in
\cite{BB})

\begin{lemma}
\label{Lem_SBOCosets}Assume $\lambda,\mu$ are dominant weights and
$(w,w^{\prime})\in W^{\lambda+\mu}\times W^{\lambda+\mu}$.\ Then
\[
w\trianglelefteq w^{\prime}\Longleftrightarrow\left\{
\begin{array}
[c]{c}%
p_{\lambda}(w)\trianglelefteq p_{\lambda}(w^{\prime}),\\
p_{\mu}(w)\trianglelefteq p_{\mu}(w^{\prime}).
\end{array}
\right.
\]

\end{lemma}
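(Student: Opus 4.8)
The forward implication is immediate: applying (\ref{proj}) first to $\lambda$ and then to $\mu$ gives $w\trianglelefteq w'\Rightarrow p_\lambda(w)\trianglelefteq p_\lambda(w')$ and $p_\mu(w)\trianglelefteq p_\mu(w')$. All the work is in the converse, and the plan is to recast it as a statement about the Bruhat order on a parabolic quotient of $W$.

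The one place where the hypothesis on $\lambda,\mu$ really enters is the identity $J_{\lambda+\mu}=J_\lambda\cap J_\mu$: indeed $i\in J_{\lambda+\mu}$ means $(\lambda+\mu)(h_i)=0$, i.e. $\lambda(h_i)+\mu(h_i)=0$, and since $\lambda$ and $\mu$ are dominant both summands are $\ge 0$, so $\lambda(h_i)=\mu(h_i)=0$, that is $i\in J_\lambda\cap J_\mu$ (the reverse inclusion is clear). Consequently $W_{\lambda+\mu}=W_\lambda\cap W_\mu$, the set $W^{\lambda+\mu}$ is precisely the set of minimal length representatives of the cosets of $W/(W_\lambda\cap W_\mu)$, and $p_\lambda,p_\mu$ are the usual parabolic projections $W\to W^\lambda$, $W\to W^\mu$. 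Thus the lemma asserts exactly that
\[
\Phi\colon W^{\lambda+\mu}\longrightarrow W^\lambda\times W^\mu,\qquad w\longmapsto\bigl(p_\lambda(w),p_\mu(w)\bigr),
\]
is an order embedding for $\trianglelefteq$, with the product order on the right-hand side. That $\Phi$ is order preserving is (\ref{proj}); it is automatically injective (if $p_\lambda(w)=p_\lambda(w'')$ and $p_\mu(w)=p_\mu(w'')$ then $w^{-1}w''\in W_\lambda\cap W_\mu=W_{\lambda+\mu}$, and two minimal representatives of one $W_{\lambda+\mu}$-coset are equal); the substance is that $\Phi$ is order reflecting, which is precisely Theorem $2.6.1$ of \cite{BB}.

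To keep the argument self-contained I would prove this last point as follows. First, a one-line consequence of (\ref{proj}): $p_\lambda(w)\trianglelefteq p_\lambda(w')$ is equivalent to $p_\lambda(w)\trianglelefteq w'$, because $p_\lambda(w')\trianglelefteq w'$ always holds ($w'=p_\lambda(w')v$ with $v\in W_\lambda$ and $\ell(w')=\ell(p_\lambda(w'))+\ell(v)$), while conversely (\ref{proj}) applied to $p_\lambda(w)\trianglelefteq w'$, together with $p_\lambda(p_\lambda(w))=p_\lambda(w)$, recovers the first inequality; and similarly for $\mu$. One is thus reduced to showing: if $w\in W^{\lambda+\mu}$, $p_\lambda(w)\trianglelefteq w'$ and $p_\mu(w)\trianglelefteq w'$, then $w\trianglelefteq w'$. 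I would argue by induction on $\ell(w')$. For $\ell(w')=0$ the hypotheses force $p_\lambda(w)=p_\mu(w)=e$, hence $w\in W_\lambda\cap W_\mu\cap W^{\lambda+\mu}=\{e\}$. For $\ell(w')\ge 1$, choose $s=s_i$ with $w's<w'$; since $w'$ is minimal in its $W_{\lambda+\mu}$-coset one has $i\notin J_{\lambda+\mu}=J_\lambda\cap J_\mu$, say $i\notin J_\mu$; one then invokes the lifting property of the Bruhat order, distinguishing whether $ws<w$ or $ws>w$, to pass to a strictly smaller instance while checking that the two projection inequalities are preserved (after, if needed, interchanging the roles of $\lambda$ and $\mu$).

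I expect the main obstacle to be exactly this bookkeeping. Right multiplication by $s$ does not in general preserve minimality in a $W_{\lambda+\mu}$-coset, so one must strip only descents lying outside $J_\lambda\cap J_\mu$, sometimes replace an element by the minimal representative of its coset, and keep track of which of the two projection conditions survives at each stage; it is this care that is encapsulated in the cited result of \cite{BB}, which may therefore simply be quoted.
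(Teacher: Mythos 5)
Your proof is correct and follows essentially the same route as the paper, which likewise disposes of the lemma by invoking Theorem 2.6.1 of Bj\"orner--Brenti; your explicit verification that $J_{\lambda+\mu}=J_{\lambda}\cap J_{\mu}$ (so that $W^{\lambda+\mu}=W^{J_{\lambda}\cap J_{\mu}}$ and the cited theorem applies to the family $\{J_{\lambda},J_{\mu}\}$) supplies the one step the paper leaves implicit. The self-contained induction you sketch is admittedly not carried to completion, but since you ultimately quote the same reference for the hard (order-reflecting) direction, nothing essential is missing.
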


\bigskip

Let $\mathfrak{g}$ be the symmetrizable Kac-Moody algebra associated to the
generalized Cartan matrix $A$.\ We yet refer to \cite{KacB} for a detailed
definition of $\mathfrak{g}$ and write as usual $R$ its root system and $P$
its weight lattice. The algebra $\mathfrak{g}$ admits a presentation by
relations on its Chevalley type generators $e_{i},f_{i},$ $i\in I$ and $h\in
P^{\ast}$.\ There exists a relevant semisimple category $\mathcal{O}%
_{\mathrm{int}}$ of integrable $\mathfrak{g}$-modules whose simple are
parametrized by the dominant weights in $P_{+}$.\ To each $\lambda\in P_{+}$
corresponds a unique (up to isomorphism) irreducible highest weight integrable
$\mathfrak{g}$-module $V(\lambda)$ of highest weight $\lambda$.\ The
irreducible module $V(\lambda)$ decomposes into weight spaces $V(\lambda
)=\bigoplus_{\gamma\in P}V(\lambda)_{\gamma};$ and each weight space
$V(\lambda)_{\gamma}$ is finite-dimensional.\ Consider the ring algebra
$\mathbb{Z}[P]$ with basis the formal exponentials $e^{\beta},\beta\in P$.\ We
have an action of $W$ on $\mathbb{Z}[P]$ defined by $w\cdot e^{\beta
}=e^{w(\beta)}$.\ Set $\mathbb{Z}^{W}[P]=\{X\in\mathbb{Z}[P]\mid
w(X)=X\}$.\ The character $s_{\lambda}$ of $V(\lambda)$ is the element of
$\mathbb{Z}[P]$ defined by $s_{\lambda}:=\sum_{\gamma\in P}K_{\lambda,\gamma
}e^{\gamma}$ where $K_{\lambda,\gamma}:=\operatorname{dim}(V(\lambda)_{\gamma
})$. It belongs in fact to $\mathbb{Z}^{W}[P]$ because $K_{\lambda,\gamma
}=K_{\lambda,w(\gamma)}$ for any $w\in W$. We have the Weyl-Kac character
formula: for any $\lambda\in P_{+}$,%
\begin{equation}
s_{\lambda}=\frac{\sum_{w\in{\mathsf{W}}}\varepsilon(w)e^{w(\lambda+\rho
)-\rho}}{\prod_{\alpha\in R_{+}}(1-e^{-\alpha})^{m_{\alpha}}} \label{WCF}%
\end{equation}
where $m_{\alpha}$ is the multiplicity of the roots $\alpha$ (equal to $1$ in
the finite case).

The quantum group $U_{q}(\mathfrak{g})$ is also defined from the same
generalized Cartan matrix $A$. It also admits a presentation by generators and
relations which can be regarded as $q$-deformation of that of $\mathfrak{g}$
(see \cite{kash}). Roughly speaking, one obtains the enveloping algebra
$U(\mathfrak{g)}$ of $\mathfrak{g}$ as the limit of $U_{q}(\mathfrak{g)}$ when
$q$ tends to $1$. This implies that the representation theory of
$U_{q}(\mathfrak{g)}$ is essentially similar to that of $U(\mathfrak{g)}$ and
thus also to that of $\mathfrak{g}$.\ Therefore, for simplicity and since we
do not need to distinguish the different module structures in the sequel, we
will use the same notation for the category of integrable modules of
$\mathfrak{g,}U(\mathfrak{g})$ and $U_{q}(\mathfrak{g)}$. In particular, for
each dominant weight $\lambda$, there exists a unique $U_{q}(\mathfrak{g}%
)$-module in $\mathcal{O}_{\mathrm{int}}$ also denoted by $V(\lambda)$.

\subsubsection{Crystals of integrable modules}

To each dominant weight $\lambda$ corresponds a crystal graph $B(\lambda)$
which can be regarded as the combinatorial skeleton of the simple module
$V(\lambda)$. Its structure can be defined from the notion of canonical bases
as introduced  by Lusztig \cite{Luca} and subsequently studied by Kashiwara
under the name of global bases (see \cite{kash} and \cite{kash2}). It also has
a purely combinatorial definition in terms of Littelmann's path model (see
\cite{Lit1}).\ The crystal $B(\lambda)$ is a graph whose set of vertices is
endowed with a weight function $\mathrm{wt}:B(\lambda)\rightarrow P$ and with
the structure of a colored and oriented graph given by the action of the
crystal operators $\tilde{f}_{i}$ and $\tilde{e}_{i}$ with $i\in I$. More
precisely, we have an oriented arrow $b\overset{i}{\rightarrow}b^{\prime}$
between two vertices $b$ and $b^{\prime}$ in $B(\lambda)$ if and only if
$b^{\prime}=\tilde{f}_{i}(b)$ or equivalently $b=\tilde{e}_{i}(b^{\prime})$.
We have $\tilde{f}_{i}(b)=0$ (resp. $\tilde{e}_{i}(b)=0$) when no arrow $i$
starts from $b$ (resp. ends at $b$). There is a unique vertex $b_{\lambda}$ in
$B(\lambda)$ such that $\tilde{e}_{i}(b_{\lambda})=0$ for any $i\in I$ called
the highest weight vertex of $B(\lambda)$ and we have $\mathrm{wt}(b_{\lambda
})=\lambda$.\ Thus, for any $b\in B(\lambda)$, there is a path $b=\tilde
{f}_{i_{1}}\cdots\tilde{f}_{i_{r}}(b_{\lambda})$ from $b_{\lambda}$ to
$b$.\ The weight function $\mathrm{wt}$ is such that
\[
\mathrm{wt}(b)=\lambda-\sum_{k=1}^{r}\alpha_{i_{k}}.
\]
For any $i\in I$, the crystal $B(\lambda)$ decomposes into $i$-chains.\ For
any vertex $b\in B(\lambda)$, set $\varphi_{i}(b)=\max\{k\mid\tilde{f}_{i}%
^{k}(b)\neq0\}$ and $\varepsilon_{i}(b)=\max\{k\mid\tilde{e}_{i}^{k}%
(b)\neq0\}$.\ We have
\[
\mathrm{wt}(\tilde{f}_{i}(b))=\mathrm{wt}(b)-\alpha_{i}\text{ and }s_{\lambda
}=\sum_{b\in B(\lambda)}e^{\mathrm{wt}(b)}.
\]
The Weyl group $W$ acts on the vertices of $B(\lambda)$: the action of the
simple reflection $s_{i}$ on $B(\lambda)$ sends each vertex $b$ on the unique
vertex $b^{\prime}$ in the $i$-chain of $b$ such that $\varphi_{i}(b^{\prime
})=\varepsilon_{i}(b)$ and $\varepsilon_{i}(b^{\prime})=\varphi_{i}(b)$ for
any $i\in I$. This simply means that $b$ and $b^{\prime}$ correspond by the
reflection with respect to the center of the $i$-chain containing $b$. We
shall write
\[
O(\lambda)=\{w\cdot b_{\lambda}=b_{w\lambda}\mid w\in W\}
\]
for the orbit of the highest weight vertex of $B(\lambda)$.\ Observe
$b_{w\lambda}$ is then the unique vertex in $B(\lambda)$ of weight $w\lambda$.

More generally, the crystal $B_{M}$ of any module $M$ in $\mathcal{O}%
_{\mathrm{int}}$ is the disjoint union of the crystals associated to the
irreducible modules appearing in its decomposition.\ In particular, the
multiplicity of the irreducible module $V(\lambda)$ in $M$ corresponds to the
number of copies of the crystal $B(\lambda)$ in $B_{M}$. Consider $M$ and $N$
two modules in $\mathcal{O}_{\mathrm{int}}$ with crystals $B_{M}$ and $B_{N}$,
respectively. The crystal associated to $M\otimes N$ is the crystal
$B_{M}\otimes B_{N}$ whose set of vertices is the direct product of the sets
of vertices of $B_{M}$ and $B_{N}$ and whose crystal structure is given by the
following rules\footnote{Observe our convention here is not the same as in
\cite{kash} and \cite{kash2}.}
\begin{equation}
\tilde{e}_{i}(u\otimes v)=\left\{
\begin{array}
[c]{l}%
u\otimes\tilde{e}_{i}(v)\text{ if }\varepsilon_{i}(u)\leq\varphi_{i}(v)\\
\tilde{e}_{i}(u)\otimes v\text{ if }\varepsilon_{i}(u)>\varphi_{i}(v)
\end{array}
\right.  \text{ and }\tilde{f}_{i}(u\otimes v)=\left\{
\begin{array}
[c]{l}%
\tilde{f}_{i}(u)\otimes v\text{ if }\varphi_{i}(v)\leq\varepsilon_{i}(u)\\
u\otimes\tilde{f}_{i}(v)\text{ if }\varphi_{i}(v)>\varepsilon_{i}(u)
\end{array}
\right.  . \label{tens_crys}%
\end{equation}

A crystal $B(\infty)$ for the positive part $U_{q}^{+}(\mathfrak{g)}$ of the
quantum group $U_{q}(\mathfrak{g})$ is also available by the results of
Lusztig \cite{Luca} and Kashiwara \cite{kash}. This crystal $B(\infty)$ admits
a unique source vertex $b_{\emptyset}$.\ Moreover, for any $\lambda\in P_{+}$,
there exists a unique embedding of crystals $\pi_{\lambda}:B(\lambda
)\hookrightarrow B(\infty)$ so that for any path $b=\tilde{f}_{i_{1}}%
\cdots\tilde{f}_{i_{r}}(b_{\lambda})$ in $B(\lambda)$, we have $\pi_{\lambda
}(b)=\tilde{f}_{i_{1}}\cdots\tilde{f}_{i_{r}}(b_{\emptyset})$ in $B(\infty)$.

\subsubsection{Cosets of the Weyl group and crystals}

\label{subsub_reduc_strongBO}There is a one-to-one correspondence between
$W^{\lambda}$ and $O(\lambda)$ which associates to each $w\in W^{\lambda}$ the
vertex $b_{w\lambda}$.\ Also in $O(\lambda)$, the vertices $b_{w\lambda}$ and
$b_{s_{i}w\lambda}$ are such that%
\begin{equation}
\left\{
\begin{array}
[c]{c}%
b_{s_{i}w\lambda}=\tilde{f}_{i}^{\varphi_{i}(b_{w\lambda})}b_{w\lambda}\text{
with }\varepsilon_{i}(b_{w\lambda})=0\text{ if }\ell(s_{i}w)=\ell(w)+1,\\
b_{s_{i}w\lambda}=\tilde{e}_{i}^{\varepsilon_{i}(b_{w\lambda})}b_{w\lambda
}\text{ with }\varphi_{i}(b_{w\lambda})=0\text{ if }\ell(s_{i}w)=\ell(w)-1.
\end{array}
\right.  \label{i-chain_orbit}%
\end{equation}
In particular, for any element $b_{w\lambda}\in O(\lambda)$ and any $i\in I$,
we have either $\varepsilon_{i}(b_{w\lambda})=0$, or $\varphi_{i}(b_{w\lambda
})=0$\footnote{Nevertheless, the condition $\varepsilon_{i}(b_{w\lambda})=0$
or $\varphi_{i}(b_{w\lambda})=0$ does not characterize the elements of
$\mathcal{O}(\lambda)$.}. Now if $w$ belongs to $W^{\lambda}$ with reduced
decomposition $w=s_{i_{1}}\cdots s_{i_{\ell}}$, we will have
\[
b_{w\lambda}=\tilde{f}_{i_{\ell}}^{a_{\ell}}\cdots\tilde{f}_{i_{1}}^{a_{1}%
}(b_{\lambda})=s_{i_{\ell}}\cdots s_{i_{1}}\cdot b_{\lambda}%
\]
with $a_{1}=\varphi_{i_{1}}(b_{\lambda})$ and $a_{k}=\varphi_{i_{k}}(\tilde
{f}_{i_{k-1}}^{a_{k-1}}\cdots\tilde{f}_{i_{1}}^{a_{1}}b_{\lambda})$ for
$k=2,\ldots,\ell$. The converse is true which permits to identify the reduced
expressions of $w=s_{i_{\ell}}\cdots s_{i_{1}}\in W^{\lambda}$ with the
directed paths in $O(\lambda)$ from $b_{\lambda}$ to $b_{w\lambda}$. In some
sense, the crystal $B(\lambda)$ can be regarded as an automaton which
associates to any $w\in W$, its projection $p_{\lambda}(w)$ on $W^{\lambda}$.
On can also observe that $O(\lambda)$ has the structure of the Hasse diagram
on $W^{\lambda}$ by putting arrows $b_{w\lambda}\dashrightarrow b_{s_{i}%
w\lambda}$ when $\ell(s_{i}w)=\ell(w)+1$.

\subsubsection{Dilatation of crystals}

Consider a positive integer $m$ and $\lambda$ a dominant weight. There exists
a unique embedding of crystals $\psi_{m}:B(\lambda)\hookrightarrow
B(m\lambda)$ such that for any vertex $b\in B(\lambda)$ and any path
$b=\tilde{f}_{i_{1}}\cdots\tilde{f}_{i_{k}}(b_{\lambda})$ in $B(\lambda),$ we
have
\[
\psi_{m}(b)=\tilde{f}_{i_{1}}^{m}\cdots\tilde{f}_{i_{k}}^{m}(b_{m\lambda}).
\]
Since the vertex $b_{\lambda}^{\otimes m}$ is of highest weight $m\lambda$ in
$B(\lambda)^{\otimes m}$, one gets a particular realization $B(b_{\lambda
}^{\otimes m})$ of $B(m\lambda)$ in $B(\lambda)^{\otimes m}$ with highest
weight vertex $b_{\lambda}^{\otimes m}$. This thus gives a canonical
embedding
\begin{equation}
K_{m}:\left\{
\begin{array}
[c]{c}%
B(b_{\lambda})\hookrightarrow B(b_{\lambda}^{\otimes m})\subset B(b_{\lambda
})^{\otimes m}\\
b\longmapsto b_{1}\otimes\cdots\otimes b_{m}%
\end{array}
\right.  \label{embdedd}%
\end{equation}
Consider $\lambda$ and $\mu$ two dominant weights. Write $b_{\lambda}$ and
$b_{\mu}$ for the highest weight vertices of $B(\lambda)$ and $B(\mu)$. Then
$B(b_{\lambda}\otimes b_{\mu})$ is a realization of the abstract crystal
$B(\lambda+\mu)$ and we can define the $m$-dilatation $K_{m}:B(b_{\lambda
}\otimes b_{\mu})\hookrightarrow B(b_{\lambda+\mu}^{\otimes m})$ as in
(\ref{embdedd}). The following lemma shows there is another natural
$m$-dilatation of $B(b_{\lambda}\otimes b_{\mu})$ (see for example Corollary
2.1.3 in \cite{Lec2} for a proof).

\begin{lemma}
\label{Lem_Kmtild}The map%
\[
K_{m}^{\prime}:\left\{
\begin{array}
[c]{c}%
B(b_{\lambda}\otimes b_{\mu})\hookrightarrow B(b_{\lambda}^{\otimes m}\otimes
b_{\mu}^{\otimes m})\\
b_{1}\otimes b_{2}\longmapsto K_{m}(b_{1})\otimes K_{m}(b_{2})
\end{array}
\right.
\]
is a $m$-dilatation of $B(b_{\lambda}\otimes b_{\mu})$, that is for any $i\in
I$ we have
\[
K_{m}^{\prime}(\tilde{f}_{i}(b_{1}\otimes b_{2}))=\tilde{f}_{i}^{m}%
K_{m}^{\prime}(b_{1}\otimes b_{2})\text{ and }K_{m}^{\prime}(\tilde{e}%
_{i}(b_{1}\otimes b_{2}))=\tilde{e}_{i}^{m}K_{m}^{\prime}(b_{1}\otimes
b_{2}).
\]

\end{lemma}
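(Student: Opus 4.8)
The plan is to verify directly that the composite map $K_m'$ intertwines $\tilde f_i$ on $B(b_\lambda \otimes b_\mu)$ with $\tilde f_i^m$ on $B(b_\lambda^{\otimes m} \otimes b_\mu^{\otimes m})$, and similarly for $\tilde e_i$; the statement about $\tilde e_i$ follows from that about $\tilde f_i$ by the defining relation $b' = \tilde f_i(b) \Leftrightarrow b = \tilde e_i(b')$, so it suffices to treat $\tilde f_i$. First I would recall that each $K_m = K_m^{(\lambda)}$ and $K_m = K_m^{(\mu)}$ is itself an $m$-dilatation, i.e. $K_m(\tilde f_i b) = \tilde f_i^m K_m(b)$ and $K_m(\tilde e_i b) = \tilde e_i^m K_m(b)$ on $B(b_\lambda)$ and $B(b_\mu)$ respectively; this is the single-factor case coming from the dilatation embedding $\psi_m$ together with the identification of $B(m\lambda)$ with $B(b_\lambda^{\otimes m})$, and I would cite it rather than reprove it. The key point is then to understand how $\tilde f_i^m$ acts on a tensor product $K_m(b_1) \otimes K_m(b_2)$ and compare it with the action of $\tilde f_i$ on $b_1 \otimes b_2$ transported through $K_m' $.

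The main technical step is a compatibility of the tensor product rule \eqref{tens_crys} with dilatation. Concretely, set $x = K_m(b_1)$ and $y = K_m(b_2)$; since $K_m$ scales the string lengths, one has $\varepsilon_i(x) = m\,\varepsilon_i(b_1)$ and $\varphi_i(y) = m\,\varphi_i(b_2)$ (this is exactly the "$m$-dilatation" property applied to count how many $\tilde e_i$'s or $\tilde f_i$'s can be applied). I would then split into the two cases of the tensor rule for $b_1 \otimes b_2$. If $\varphi_i(b_2) \le \varepsilon_i(b_1)$, then $\tilde f_i(b_1 \otimes b_2) = \tilde f_i(b_1) \otimes b_2$, and on the dilated side $\varphi_i(y) = m\varphi_i(b_2) \le m\varepsilon_i(b_1) = \varepsilon_i(x)$; I need to check that applying $\tilde f_i^m$ to $x \otimes y$ keeps acting on the left factor throughout all $m$ steps, yielding $\tilde f_i^m(x) \otimes y = K_m(\tilde f_i b_1) \otimes y$, which is precisely $K_m'(\tilde f_i(b_1\otimes b_2))$. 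The point to watch is that after each single $\tilde f_i$, the quantities $\varepsilon_i$ of the left factor drop by $1$ and $\varphi_i$ of the left factor rise by $1$ while the right factor is untouched, so the inequality $\varphi_i(y) \le \varepsilon_i(\cdot)$ of the tensor rule persists for all $m-1$ remaining applications (it never becomes an equality-then-flip within the block because the left string is a full multiple of $m$ and we consume exactly $m$); hence the whole block stays on the left. The symmetric case $\varphi_i(b_2) > \varepsilon_i(b_1)$ is handled the same way, with the block staying on the right factor, giving $x \otimes \tilde f_i^m(y) = x \otimes K_m(\tilde f_i b_2)$.

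The hard part — or rather the only place where one must be careful rather than merely formal — is this bookkeeping in the "boundary" subcase where $\varphi_i(b_2) = \varepsilon_i(b_1)$: one must confirm that the $m$ consecutive applications of $\tilde f_i$ all land on the same factor and do not straddle the two factors, and that no intermediate vertex is annihilated. This is exactly why the multiplicative structure of $K_m$ (all relevant strings through $x$ and $y$ have lengths divisible by $m$, being dilations of strings through $b_1,b_2$) is used; it is the reason the naive guess "$\tilde f_i^m$ distributes nicely" is in fact correct. Once this case analysis is complete, combining it with the single-factor dilatation property of $K_m$ on each of $B(b_\lambda)$ and $B(b_\mu)$ gives $K_m'(\tilde f_i(b_1 \otimes b_2)) = \tilde f_i^m K_m'(b_1 \otimes b_2)$ in all cases, and the $\tilde e_i$ statement follows by the arrow-reversal symmetry of crystals. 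This also shows $K_m'$ is an embedding with image $B(b_{\lambda+\mu}^{\otimes m})$, identifying the two $m$-dilatations as claimed.
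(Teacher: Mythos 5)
The paper does not actually prove this lemma in the text: it cites Corollary 2.1.3 of \cite{Lec2}. Your direct verification is the standard argument behind that reference, and its architecture is right: use that $K_{m}$ on each factor is an $m$-dilatation, deduce $\varepsilon_{i}(K_{m}(b_{1}))=m\,\varepsilon_{i}(b_{1})$ and $\varphi_{i}(K_{m}(b_{2}))=m\,\varphi_{i}(b_{2})$, and then track through the tensor rule (\ref{tens_crys}) that all $m$ applications of $\tilde{f}_{i}$ land on the same factor. The reduction of the $\tilde{e}_{i}$ statement to the $\tilde{f}_{i}$ one is also fine.

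There is, however, a sign error in your bookkeeping that propagates into a misidentification of where the difficulty lies. Applying $\tilde{f}_{i}$ to a vertex $u$ gives $\varepsilon_{i}(\tilde{f}_{i}u)=\varepsilon_{i}(u)+1$ and $\varphi_{i}(\tilde{f}_{i}u)=\varphi_{i}(u)-1$, not the reverse as you state. Consequently, in the case $\varphi_{i}(b_{2})\leq\varepsilon_{i}(b_{1})$ (action on the left factor), the governing inequality $\varphi_{i}(y)\leq\varepsilon_{i}(\,\cdot\,)$ becomes strictly stronger after each step, so that case --- including your flagged boundary subcase $\varphi_{i}(b_{2})=\varepsilon_{i}(b_{1})$ --- is automatic and uses no divisibility at all. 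The case that genuinely needs the multiplicative gap is the other one: when $\varphi_{i}(b_{2})>\varepsilon_{i}(b_{1})$, the quantity $\varphi_{i}$ of the right factor drops by one at each step, and one must check that $m\varphi_{i}(b_{2})-(k-1)>m\varepsilon_{i}(b_{1})$ for $k=1,\ldots,m$, which holds precisely because $\varphi_{i}(b_{2})-\varepsilon_{i}(b_{1})\geq1$ forces a gap of at least $m$ on the dilated side. With the signs corrected and the case analysis reassigned accordingly, your proof is complete; the final remark that the image of $K_{m}^{\prime}$ is all of $B(b_{\lambda}^{\otimes m}\otimes b_{\mu}^{\otimes m})$ should also be dropped, since a dilatation image is in general a proper subset of that connected component.
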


\begin{theorem}
\label{Th_Dila}(see \cite{kash2})

\begin{enumerate}
\item For any $w\in W,$ we have $K_{m}(b_{w\lambda})=b_{w\lambda}^{\otimes m}$.

\item Consider $b\in B(\lambda)$. When $m$ has sufficiently many factors,
there exist elements $w_{1},\ldots,w_{m}$ in $W$ such that $K_{m}%
(b)=b_{w_{1}\lambda}\otimes\cdots\otimes b_{w_{m}\lambda}$. Moreover, in this case

\begin{enumerate}
\item up to repetition, the elements $b_{w_{1}\lambda}$ and $b_{w_{m}\lambda}$
in $K_{m}(b)$ do not then depend on $m$,

\item the sequence $(w_{1}\lambda,\ldots,w_{m}\lambda)$ in $K_{m}(b)$ does not
depend on the realization of the crystal $B(\lambda)$ and we have
$w_{1}\trianglelefteq\cdots\trianglelefteq w_{m}$.
\end{enumerate}
\end{enumerate}
\end{theorem}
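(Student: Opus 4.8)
The plan is to reduce everything to the single-weight dilatation statement of \cite{kash2}, which already gives a realization $K_m(b)=b_{w_1\lambda}\otimes\cdots\otimes b_{w_m\lambda}$ for $m$ with sufficiently many prime factors, together with the invariance of the outer tensor factors and the Bruhat chain $w_1\trianglelefteq\cdots\trianglelefteq w_m$; the only thing to verify is that this extends with the claimed independence from the realization. First I would prove (1): by Theorem~\ref{Th_Dila} applied to a vertex of $O(\lambda)$, one knows $K_m$ maps $O(\lambda)$ into $O(m\lambda)^{\otimes m}$ compatibly with the $W$-action, and since $\psi_m$ sends $b_\lambda$ to $b_{m\lambda}$ and intertwines $s_i$ with the reflection on the $i$-chain (which acts on $b_\lambda^{\otimes m}$ exactly as $\tilde f_i^{\,m}$ or $\tilde e_i^{\,m}$ by the tensor rule \eqref{tens_crys} when the factors are in $O$), an induction on $\ell(w)$ using \eqref{i-chain_orbit} gives $K_m(b_{w\lambda})=b_{w\lambda}^{\otimes m}$; equivalently one just observes that $\psi_m(b_{w\lambda})$ and $b_{w\lambda}^{\otimes m}$ are both the unique vertex of weight $m\,w\lambda$ in the connected component $B(b_\lambda^{\otimes m})$.

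For part (2), fix $b\in B(\lambda)$ and a reduced-in-the-crystal path $b=\tilde f_{i_1}\cdots\tilde f_{i_r}(b_\lambda)$, so $K_m(b)=\tilde f_{i_1}^{\,m}\cdots\tilde f_{i_r}^{\,m}(b_\lambda^{\otimes m})$ inside $B(b_\lambda)^{\otimes m}$. The point is that, by the tensor product rule \eqref{tens_crys}, applying a power $\tilde f_i^{\,m}$ to a tensor $c_1\otimes\cdots\otimes c_m$ distributes the $m$ applications of $\tilde f_i$ across the factors, filling them from one side; when each $c_j$ is either a highest-weight vertex of an $i$-string or already exhausted, the combinatorics of where the applications land is governed only by the numbers $\varepsilon_i,\varphi_i$ of the factors, which, for vertices of the various $O(\lambda)$'s, are determined by the Weyl-group data (by \eqref{i-chain_orbit} and the fact that a vertex of $O(\lambda)$ has $\varepsilon_i=0$ or $\varphi_i=0$). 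Concretely: take $m$ of the form $m=m'm''$ with both $m'$ and $m''$ having many prime factors, use Lemma~\ref{Lem_Kmtild} to factor $K_m=K'_{m'}\circ(\text{stuff})$ and peel off the first and last factors; as $m'\to\infty$ the leftmost factor stabilizes to some $b_{u\lambda}$ and the rightmost to some $b_{v\lambda}$ because each extra prime just repeats the extreme vertex — this is exactly the argument that proves stabilization of $K_m(b)$ in \cite{kash2}, transported one factor in. Iterating, each $b_{w_j\lambda}$ in the decomposition is produced by a sequence of operations $\tilde f_{i_k}$ read off from the path, and the bookkeeping shows the sequence $(w_1\lambda,\ldots,w_m\lambda)$ depends only on $(i_1,\ldots,i_r)$ and on the $W$-action, not on how $B(\lambda)$ is modeled.

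It remains to get the Bruhat monotonicity $w_1\trianglelefteq\cdots\trianglelefteq w_m$. Here I would argue that in $B(b_\lambda^{\otimes m})$, the highest-weight vertex of the sub-$i$-string through a tensor $b_{w_1\lambda}\otimes\cdots\otimes b_{w_m\lambda}\in O(\lambda)^{\otimes m}$ obtained as $K_m(b)$ has, by the tensor rule, the form $b_{w'_1\lambda}\otimes\cdots\otimes b_{w'_m\lambda}$ with each $w'_j\in\{w_j,s_iw_j\}$ and the pattern of which $w_j$ get multiplied by $s_i$ being an initial segment (on the appropriate side) — this is the local step — so applying $s_i$'s never breaks the chain condition, and since $b$ itself lies in the Demazure-type closure generated from $b_\lambda^{\otimes m}$ by such operations, one gets the chain by induction on $r$. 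The main obstacle I anticipate is precisely this last bookkeeping: verifying that the $\varepsilon_i/\varphi_i$ comparisons in \eqref{tens_crys}, applied to an arbitrary tensor of orbit vertices, always distribute the crystal operators so as to (a) keep every factor inside some $O(\mu)$ and (b) preserve the Bruhat order between consecutive factors. One has to track this carefully, perhaps reducing to $m=2$ or $3$ by the associativity of the tensor product and Lemma~\ref{Lem_Kmtild}, and invoking Lemma~\ref{Lem_SBOCosets} to lift Bruhat comparisons through the projections $p_\lambda$. Everything else is a packaging of \cite{kash2}.
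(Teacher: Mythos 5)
The paper offers no proof of this statement: Theorem \ref{Th_Dila} is imported directly from Kashiwara's book \cite{kash2} (it is the dilatation/decomposition theorem, equivalently the Lakshmibai--Seshadri path description of $B(\lambda)$ in Littelmann's model), so there is nothing in the text to compare your argument against. Judged on its own, your proposal does establish part (1), but only via the second of the two arguments you give --- $K_m(b_{w\lambda})$ and $b_{w\lambda}^{\otimes m}$ are both vertices of the extremal weight $mw\lambda$ in $B(b_\lambda^{\otimes m})\cong B(m\lambda)$, and such a vertex is unique; your first argument for (1) invokes Theorem \ref{Th_Dila} itself and is circular.

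For part (2) there is a genuine gap. The entire content of the theorem is that, for $m$ sufficiently divisible, \emph{every} tensor factor of $K_m(b)$ lies in the orbit $O(\lambda)$ and the resulting weights form a chain for the strong Bruhat order; your argument for this reduces to ``this is exactly the argument that proves stabilization of $K_m(b)$ in \cite{kash2}, transported one factor in,'' i.e.\ you defer the crux to the very result being proved. The local bookkeeping you propose does not close it: the intermediate vertices $\tilde f_{i_k}^{\,m}\cdots\tilde f_{i_r}^{\,m}(b_\lambda^{\otimes m})$ need not have all their factors in $O(\lambda)$ (only the final result does, and only for suitable $m$), so the tensor rule (\ref{tens_crys}) cannot be analysed purely in terms of orbit vertices; moreover the ``sufficiently many factors'' hypothesis, which is where all the difficulty sits, is never actually used in your sketch --- it is precisely what clears the rational turning points of the associated Lakshmibai--Seshadri path. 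A complete proof has to go through either Littelmann's path model (where $w_1\trianglelefteq\cdots\trianglelefteq w_m$ is built into the definition of an LS path of shape $\lambda$, and dilatation by a common denominator of the turning points straightens the path into $m$ extremal segments) or Kashiwara's global-basis argument; neither is reproduced or replaced here. Finally, Lemma \ref{Lem_SBOCosets} concerns the comparison of projections onto two different cosets and does not supply the chain condition between consecutive factors.
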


From Assertion 2 of the previous theorem, we can define the left and right
Keys of an element in $B(\lambda)$.

\begin{definition}
Let $b\in B(\lambda)$, then the left Key $K^{L}(b)$ of $b$ and the right Key
$K^{R}(b)$ of $b$ are defined as follows:
\[
K^{L}(b)=b_{w_{1}\lambda}\text{ and }K^{R}(b)=b_{w_{m}\lambda}.
\]

\end{definition}

\begin{remark}
\ \label{Rq_impo}

\begin{enumerate}
\item By Assertion 4 of the theorem, the sequence $(w_{1}\lambda,\ldots
,w_{m}\lambda)$ does not depend on the realization of the crystal $B(\lambda
)$.\ Nevertheless, the components $b_{w_{k}\lambda}$ do and thus also the left
and right Keys.

\item Assume $B_{1}(\lambda)$ and $B_{2}(\lambda)$ are two realizations of the
crystal $B(\lambda)$ and $\phi:B_{1}(\lambda)\rightarrow B_{2}(\lambda)$ the
associated crystal isomorphism.\ Let $K_{m}^{(1)}$ and $K_{m}^{(2)}$ be the
crystal embedding defined from $B_{1}(\lambda)$ and $B(\lambda)$ as in
(\ref{embdedd}). Since $\phi$ is a crystal isomorphism and $K_{m}^{(1)}%
,K_{m}^{(2)}$ are both crystal embeddings we have $K_{m}^{(2)}\circ\phi
=\phi^{\otimes m}\circ K_{m}^{(1)}$ where $\phi^{\otimes m}$ is defined on
$B_{1}(\lambda)^{\otimes m}$ by applying $\phi$ to each factors. In
particular, for any $b\in B_{1}(\lambda)$ we have
\begin{equation}
K^{L}\cdot\phi(b)=\phi\cdot K^{L}(b)\text{ and }K^{R}\cdot\phi(b)=\phi\cdot
K^{R}(b).\label{K_comm_fi}%
\end{equation}

\end{enumerate}
\end{remark}

Following Kashiwara, let us now define for any $\mu\in W\cdot\lambda$, the
set
\[
\overline{B}_{\mu}(\lambda)=\{b\in B(\lambda)\mid K^{R}(b)=b_{\mu}\}
\]
We then have $B(\lambda)=%
{\textstyle\bigsqcup\limits_{\mu\in W\cdot\lambda}}
\overline{B}_{\mu}(\lambda)$.

\subsection{Crystals of Demazure modules}

Let $\lambda$ be a dominant weight and consider $w\in W$.\ Then, there exists
(up to a constant) a unique highest weight vector $v_{w\lambda}$ in
$V(\lambda)$. The Demazure module associated to $v_{w\lambda}$ is the
$U_{q}^{+}(\mathfrak{g})$-module defined by
\[
D_{w}(\lambda):=U_{q}^{+}(\mathfrak{g})\cdot v_{w\lambda}.
\]
Demazure \cite{Dem} introduced the character $s_{\lambda}^{w}$ of
$D_{w}(\lambda)$ and shows that it can be computed by applying to $e^{\lambda
}$ a sequence of divided difference operators given by any decomposition of
$w$. More precisely, define for any $i\in I$ the operator $D_{i}$ on
$\mathbb{Z}[P]$ by%
\[
D_{i}(X)=\frac{X-e^{-\alpha_{i}}(s_{i}\cdot X)}{1-e^{-\alpha_{i}}}.
\]
Consider a reduced decomposition $w=s_{i_{1}}\cdots s_{i_{\ell}}$ of $w$.
Then, Demazure proved that $D_{w}=D_{i_{1}}\cdots D_{i_{\ell}}$ depends only
on $w$ and not on the reduced decomposition considered.\ Then $s_{\lambda}%
^{w}=D_{w}(e^{\lambda})\in\mathbb{Z}[P]$ is the Demazure character. Later
Kashiwara \cite{Kash0} and Littelmann \cite{Lit1} defined a relevant notion of
crystals for the Demazure modules. To do this, consider for any $w\in W$, the
set
\[
\overline{B}_{w\lambda}(\lambda)=\{b\in B(\lambda)\mid K^{R}(b)=b_{w\lambda
}\}.
\]
By definition we have $\overline{B}_{w\lambda}(\lambda)=\overline
{B}_{w^{\prime}\lambda}(\lambda)$ when $w$ and $w^{\prime}$ belong to the same
left coset of $W/W_{\lambda}$. We also get $B(\lambda)=%
{\textstyle\bigsqcup\limits_{w\lambda\in W\lambda}}
\overline{B}_{w\lambda}(\lambda)$.

\begin{definition}
The Demazure crystal $B_{w}(\lambda)$ is defined by
\begin{equation}
B_{w}(\lambda)=%
{\textstyle\bigsqcup\limits_{w^{\prime}\trianglelefteq w}}
\overline{B}_{w^{\prime}\lambda}(\lambda). \label{B_w(lambda)}%
\end{equation}

\end{definition}

By writing $w=uv$ with $u\in W^{\lambda}$ and $v\in W_{\lambda}$, we get
$B_{w}(\lambda)=B_{u}(\lambda)$ from the characterization of the strong Bruhat
order recalled in \S \ref{subsec_KMA}.\ Thus we can and shall assume that both
$w$ and $w^{\prime}$ belong to $W^{\lambda}$ in (\ref{B_w(lambda)}). The
following Theorem has been established by Kashiwara and Littelmann.

\begin{theorem}
\label{Th_KL}Assume $\lambda$ is a dominant weight.

\begin{enumerate}
\item We have $s_{\lambda}^{w}=\sum_{b\in B_{w}(\lambda)}e^{\mathrm{wt}(b)}$.

\item For any reduced decomposition $s_{i_{1}}\cdots s_{i_{\ell}}$ of $w$, we
have $B_{w}(\lambda):=\{\tilde{f}_{i_{1}}^{k_{1}}\cdots\tilde{f}_{i_{\ell}%
}^{k_{\ell}}(b_{\lambda})\mid(k_{1},\ldots,k_{\ell})\in\mathbb{Z}_{\geq
0}^{\ell}\}.$
\end{enumerate}
\end{theorem}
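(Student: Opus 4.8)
The plan is to establish assertion (2) first, by induction on $\ell(w)$ using the dilatation Theorem~\ref{Th_Dila} and the lifting property of the strong Bruhat order, and then to deduce assertion (1). Fix $w\in W^{\lambda}$ with a reduced decomposition $w=s_{i_{1}}\cdots s_{i_{\ell}}$ and set
\[
\mathfrak{B}_{w}=\{\tilde{f}_{i_{1}}^{k_{1}}\cdots\tilde{f}_{i_{\ell}}^{k_{\ell}}(b_{\lambda})\mid(k_{1},\ldots,k_{\ell})\in\mathbb{Z}_{\geq 0}^{\ell}\}\setminus\{0\};
\]
then (2) is the equality $\mathfrak{B}_{w}=B_{w}(\lambda)$, and once this is known for every reduced word it also shows that $\mathfrak{B}_{w}$ does not depend on the chosen word. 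Putting $i=i_{1}$ and $v=s_{i_{2}}\cdots s_{i_{\ell}}$, one has $v=s_{i}w\in W^{\lambda}$ (if $u\in W^{\lambda}$ and $s_{j}u<u$ then $s_{j}u\in W^{\lambda}$) and $\ell(v)=\ell(w)-1$, and $\mathfrak{B}_{w}=\mathfrak{D}_{i}(\mathfrak{B}_{v})$ where $\mathfrak{D}_{i}(S):=\{\tilde{f}_{i}^{k}(b)\mid b\in S,\ k\geq 0\}\setminus\{0\}$. So the inductive step is $\mathfrak{D}_{i}(B_{v}(\lambda))=B_{w}(\lambda)$; the base case $w=e$ is $B_{e}(\lambda)=\{b_{\lambda}\}$, which is immediate since $w_{1}\trianglelefteq\cdots\trianglelefteq w_{m}=e$ forces every $w_{j}=e$ in the dilatation, whence $K_{m}(b)=b_{\lambda}^{\otimes m}=K_{m}(b_{\lambda})$ and $b=b_{\lambda}$.

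The decisive input is a local statement about the Key map: if $b\in B(\lambda)$ with $K^{R}(b)=b_{v'\lambda}$ ($v'\in W^{\lambda}$) and $\tilde{f}_{i}^{k}(b)\neq 0$, then $K^{R}(\tilde{f}_{i}^{k}(b))\in\{b_{v'\lambda},\,b_{s_{i}v'\lambda}\}$, and if moreover $\varepsilon_{i}(b)=0$ then $\ell(s_{i}v')=\ell(v')+1$. I would deduce both from Theorem~\ref{Th_Dila}. Choose $m$ with sufficiently many prime factors, so that $K_{m}(b)=b_{w_{1}\lambda}\otimes\cdots\otimes b_{w_{m}\lambda}$ with $K^{R}(b)=b_{w_{m}\lambda}$, and recall that $K_{m}(\tilde{f}_{i}^{k}(b))=\tilde{f}_{i}^{mk}K_{m}(b)$ because $K_{m}$ is an $m$-dilatation. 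By the tensor rule~(\ref{tens_crys}) the rightmost component of a tensor can only move downwards in its $i$-chain when $\tilde{f}_{i}$ is applied, so the rightmost component of $K_{m}(\tilde{f}_{i}^{k}(b))$ lies in the $i$-chain of $b_{w_{m}\lambda}$; being again a vertex of $O(\lambda)$ by Theorem~\ref{Th_Dila}, it must be one of the two extremities of that chain, i.e. $b_{w_{m}\lambda}$ or $b_{s_{i}w_{m}\lambda}$ (here one uses~(\ref{i-chain_orbit}) and the fact that, since every vertex of $O(\lambda)$ has $\varepsilon_{i}=0$ or $\varphi_{i}=0$, the orbit meets any $i$-chain only in its endpoints). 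This gives the first statement. If in addition $\varepsilon_{i}(b)=0$, then $\varepsilon_{i}(K_{m}(b))=m\,\varepsilon_{i}(b)=0$; since $\varepsilon_{i}$ of a tensor vanishes only when $\varepsilon_{i}$ of its rightmost component does, we get $\varepsilon_{i}(b_{w_{m}\lambda})=\varepsilon_{i}(K^{R}(b))=0$, which for a vertex $b_{v'\lambda}$ of $O(\lambda)$ with $v'\in W^{\lambda}$ is exactly $\ell(s_{i}v')=\ell(v')+1$.

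Granting this, both inclusions of $\mathfrak{D}_{i}(B_{v}(\lambda))=B_{w}(\lambda)$ become order-theoretic, using throughout that $v\trianglelefteq s_{i}v=w$, $s_{i}w<w$, the characterisations of \S\ref{subsec_KMA}, and the lifting property of the Bruhat order (\cite{BB}) together with~(\ref{proj}). For the inclusion $\subseteq$: if $b\in B_{v}(\lambda)$, i.e. $K^{R}(b)=b_{v'\lambda}$ with $v'\trianglelefteq v$, $v'\in W^{\lambda}$, and $b'=\tilde{f}_{i}^{k}(b)$, then $K^{R}(b')=b_{w'\lambda}$ with $w'$ equal to $v'$ or to $p_{\lambda}(s_{i}v')$; in the first case $w'\trianglelefteq v\trianglelefteq w$, and in the second the lifting property applied with $s_{i}w<w$ gives $s_{i}v'\trianglelefteq w$, hence $w'=p_{\lambda}(s_{i}v')\trianglelefteq w$. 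For the reverse inclusion: given $b'\in B_{w}(\lambda)$, set $k=\varepsilon_{i}(b')$, $b=\tilde{e}_{i}^{k}(b')$, so that $b'=\tilde{f}_{i}^{k}(b)$ and $\varepsilon_{i}(b)=0$; writing $K^{R}(b)=b_{v'\lambda}$ and $K^{R}(b')=b_{w'\lambda}$, the local input gives $v'\in\{w',\,p_{\lambda}(s_{i}w')\}$ together with $\ell(s_{i}v')=\ell(v')+1$, and from $w'\trianglelefteq w$ and $s_{i}w<w$ one checks via the lifting property that $v'\trianglelefteq v$ (and $v'\in W^{\lambda}$) in either case. Hence $b\in B_{v}(\lambda)$ and $b'=\tilde{f}_{i}^{k}(b)\in\mathfrak{D}_{i}(B_{v}(\lambda))$, which completes the induction and proves~(2).

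For (1), assertion~(2) gives $B_{w}(\lambda)=\mathfrak{D}_{i_{1}}(B_{v}(\lambda))$ with $B_{v}(\lambda)=\mathfrak{D}_{i_{2}}\cdots\mathfrak{D}_{i_{\ell}}(\{b_{\lambda}\})$, so by induction on $\ell(w)$ it suffices to prove $\mathrm{ch}(\mathfrak{D}_{i}(S))=D_{i}(\mathrm{ch}(S))$ for $S=B_{v}(\lambda)$ with $\ell(s_{i}v)=\ell(v)+1$, where $\mathrm{ch}(S)=\sum_{b\in S}e^{\mathrm{wt}(b)}$: indeed $\mathrm{ch}(B_{w}(\lambda))=D_{i_{1}}(\mathrm{ch}(B_{v}(\lambda)))=D_{i_{1}}(s_{\lambda}^{v})=s_{\lambda}^{s_{i_{1}}v}=s_{\lambda}^{w}$ then follows from the inductive hypothesis and Demazure's relation $D_{s_{i}v}=D_{i}D_{v}$ for a reduced word, the base case being $\mathrm{ch}(\{b_{\lambda}\})=e^{\lambda}=s_{\lambda}^{e}$. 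The identity $\mathrm{ch}\circ\mathfrak{D}_{i}=D_{i}\circ\mathrm{ch}$ fails for an arbitrary subset $S$; what rescues it — and this, together with the local statement of the second paragraph, is where I expect the real work to be — is the structural fact that the Demazure set $B_{v}(\lambda)$ meets every $i$-chain in the empty set, in its highest vertex alone, or in the whole chain, after which the identity is checked one $i$-chain at a time from $D_{i}(e^{\nu})=\sum_{j=0}^{\langle\nu,h_{i}\rangle}e^{\nu-j\alpha_{i}}$ when $\langle\nu,h_{i}\rangle\geq 0$. Both of these structural facts are the crystal counterpart of the statement that $U_{q}^{+}(\mathfrak{g})v_{w\lambda}$ is generated over $U_{q}^{+}(\mathfrak{g})v_{v\lambda}$ by the $\tilde{f}_{i}$-string through $v_{v\lambda}$, and the cleanest self-contained way to obtain them is via Littelmann's path model~\cite{Lit1} (or Kashiwara's analysis of crystal bases of Demazure modules~\cite{Kash0}); the dilatation argument above then transports them into any realisation of $B(\lambda)$, for instance a tableau model.
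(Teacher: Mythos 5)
The paper does not actually prove this theorem: it is quoted as a result of Kashiwara \cite{Kash0} and Littelmann \cite{Lit1}, so there is no in-text argument to compare yours against. Judged on its own terms, your treatment of assertion (2) is essentially sound. The ``local Key statement'' --- that $K^{R}(\tilde{f}_{i}^{k}(b))\in\{b_{v'\lambda},b_{s_{i}v'\lambda}\}$ and that $\varepsilon_{i}(b)=0$ forces $\ell(s_{i}v')=\ell(v')+1$ --- does follow from Theorem \ref{Th_Dila}, the tensor rule (\ref{tens_crys}) and (\ref{i-chain_orbit}) exactly as you say, and the rest is Deodhar's lifting property together with the standard fact that for $v'\in W^{\lambda}$ with $s_{i}v'>v'$ one has either $s_{i}v'\in W^{\lambda}$ or $s_{i}v'\in v'W_{\lambda}$; you use this implicitly in the case $w'=p_{\lambda}(s_{i}v')$ and should make it explicit. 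Two smaller points: you only treat $w\in W^{\lambda}$, whereas assertion (2) is stated for an arbitrary reduced word of an arbitrary $w$, so the letters absorbed by $W_{\lambda}$ still need the (easy) extra step; and you should fix a single $m$ in Theorem \ref{Th_Dila} that works simultaneously for $b$ and $\tilde{f}_{i}^{k}(b)$.

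The genuine gap is in assertion (1). Your reduction of the character identity to the string property --- that $B_{v}(\lambda)$ meets each $i_{1}$-chain in $\emptyset$, in its top vertex alone, or in the whole chain --- is correct, and you correctly diagnose that without it $\mathrm{ch}\circ\mathfrak{D}_{i}=D_{i}\circ\mathrm{ch}$ fails (a two-element top segment of a longer chain already breaks it). But you then assert this property with a pointer to \cite{Lit1} and \cite{Kash0} instead of proving it, and it does not follow from what you have established: your induction only yields that $B_{w}(\lambda)$ is $\tilde{f}_{i_{1}}$-stable, which says nothing about how $B_{v}(\lambda)$ sits inside $i_{1}$-chains. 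Stability under all the $\tilde{e}_{j}$, and the dichotomy ``single top vertex or the whole string,'' are precisely the hard content of the Kashiwara--Littelmann theorem. So what you have written is a correct and informative reduction of the theorem to its crux, not a complete proof; since the paper itself takes the whole statement as a black box this is no worse than the source, but part (1) should be flagged as incomplete as a standalone argument.
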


It is also interesting to define%
\[
B_{w}(\infty)=\lim_{\lambda\rightarrow+\infty}B_{w}(\lambda):=\{\tilde
{f}_{i_{1}}^{k_{1}}\cdots\tilde{f}_{i_{\ell}}^{k_{\ell}}(b_{\emptyset}%
)\mid(k_{1},\ldots,k_{\ell})\in\mathbb{Z}_{\geq0}^{\ell}\}.
\]
Thus, from the above result, we deduce that to compute the Demazure crystal
$B_{w}(\lambda)$, it suffices to

\begin{itemize}
\item compute the Key map $K^{R}$ on $B(\lambda)$.

\item compute the strong Bruhat order on $W^{\lambda}$, or alternatively on
the vertices of $O(\lambda)$.
\end{itemize}

\section{Recursive computations of the Keys and the strong Bruhat order}

\label{Sec_Th_fund}We shall describe in this section procedures for computing
the Keys using combinatorial $\mathrm{R}$-matrices and the strong Bruhat order
on the orbit of the highest weight vertex.

\subsection{Keys and combinatorial $\mathrm{R}$-matrices}

\label{subsec_Fock}Consider $\lambda,\mu$ two dominant weights. Then
$B(\lambda)\otimes B(\mu)$ contains a unique connected component
$B_{\lambda,\mu}(\lambda+\mu)$ isomorphic to the abstract crystal
$B(\lambda+\mu)$ with highest weight vertex $b_{\lambda,\mu}=b_{\lambda
}\otimes b_{\mu}$. Moreover, the crystals $B(\lambda)\otimes B(\mu)$ and
$B(\mu)\otimes B(\lambda)$ are isomorphic. In general, there are fewer
isomorphisms from $B(\lambda)\otimes B(\mu)$ to $B(\mu)\otimes B(\lambda)$
(see \cite{KT} for the description of such an isomorphism in the Kac-Moody
case). Nevertheless, each such isomorphism sends $B_{\lambda,\mu}(\lambda
+\mu)$ on $B_{\mu,\lambda}(\lambda+\mu)$ for there is only one connected
component in $B(\lambda)\otimes B(\mu)$ and $B(\mu)\otimes B(\lambda)$ of
highest weight $\lambda+\mu$ that we call \emph{principal}. We shall write
$\mathrm{R}$ the unique isomorphism from $B_{\lambda,\mu}(\lambda+\mu)$ to
$B_{\mu,\lambda}(\lambda+\mu)$.\ Also recall that $O_{\lambda,\mu}(\lambda
+\mu)$ is the orbit of $b_{\lambda}\otimes b_{\mu}$ in $B(\lambda)\otimes
B(\mu)$ under the action of $W$.

Given two crystals $B_{1}$ and $B_{2}$ the flip $\mathrm{F}$ is the bijection
from $B_{1}\otimes B_{2}$ to $B_{2}\otimes B_{1}$ defined by $\mathrm{F}%
(u\otimes v)=v\otimes u$ for any $u\otimes v$ in $B_{1}\otimes B_{2}$. This is
not a crystal isomorphism in general.

The previous definitions of $B_{\lambda,\mu}(\mu+\lambda),$ $O_{\lambda,\mu
}(\lambda+\mu)$ etc. extend naturally to the case where a sequence
$\lambda^{(1)},\ldots,\lambda^{(m)}$ of dominant weights is considered (rather
than just two dominant weights).\ Let us start with the easy following lemma.

\begin{lemma}
\label{Lem_W_tens_prod}Consider $w\cdot b_{\lambda^{(1)}}\otimes\cdots\otimes
b_{\lambda^{(m)}}\in O_{\lambda^{(1)},\ldots,\lambda^{(m)}}(\lambda
^{(1)}+\cdots+\lambda^{(m)})$.\ Then we have
\[
w\cdot b_{\lambda^{(1)}}\otimes\cdots\otimes b_{\lambda^{(m)}}=b_{w\lambda
^{(1)}}\otimes\cdots\otimes b_{w\lambda^{(m)}}.
\]
Moreover, for any $i\in I$ and any $k=1,\ldots,m$ we have%
\[
\left\{
\begin{array}
[c]{c}%
\varepsilon_{i}(b_{w\cdot\lambda^{(k)}})=0\text{ when }\ell(s_{i}%
w)=\ell(w)+1,\\
\varphi_{i}(b_{w\cdot\lambda^{(k)}})=0\text{ when }\ell(s_{i}w)=\ell(w)-1.
\end{array}
\right.
\]

\end{lemma}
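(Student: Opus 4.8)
The plan is to prove the first equality by induction on $\ell(w)$; the second assertion will then follow at once from (\ref{i-chain_orbit}). If $\ell(w)=0$ there is nothing to prove, so suppose $\ell(w)\geq 1$ and fix a reduced decomposition $w=s_{i}w'$ with $\ell(w')=\ell(w)-1$, hence $\ell(s_{i}w')=\ell(w')+1$. Since the orbit $O_{\lambda^{(1)},\ldots,\lambda^{(m)}}(\lambda^{(1)}+\cdots+\lambda^{(m)})$ is stable under $W$, the induction hypothesis applies to $w'\cdot b_{\lambda^{(1)}}\otimes\cdots\otimes b_{\lambda^{(m)}}$ and gives
\[
w'\cdot b_{\lambda^{(1)}}\otimes\cdots\otimes b_{\lambda^{(m)}}=b_{w'\lambda^{(1)}}\otimes\cdots\otimes b_{w'\lambda^{(m)}}.
\]
Since $\ell(s_{i}w')=\ell(w')+1$, applying (\ref{i-chain_orbit}) inside each $B(\lambda^{(k)})$ shows that, writing $p_{k}:=\varphi_{i}(b_{w'\lambda^{(k)}})$, we have $\varepsilon_{i}(b_{w'\lambda^{(k)}})=0$ and $\tilde{f}_{i}^{p_{k}}(b_{w'\lambda^{(k)}})=b_{s_{i}w'\lambda^{(k)}}=b_{w\lambda^{(k)}}$ for every $k$. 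Thus the inductive step reduces to the identity
\[
s_{i}\cdot\bigl(b_{w'\lambda^{(1)}}\otimes\cdots\otimes b_{w'\lambda^{(m)}}\bigr)=\tilde{f}_{i}^{p_{1}}(b_{w'\lambda^{(1)}})\otimes\cdots\otimes\tilde{f}_{i}^{p_{m}}(b_{w'\lambda^{(m)}}).
\]

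This is the one genuinely non-formal point. I would prove it by restricting all the crystals involved to their $i$-colored arrows: each $i$-string of $B(\lambda^{(k)})$ is then an $A_{1}$-crystal, the tensor rule (\ref{tens_crys}) for the color $i$ coincides with the tensor product rule for $A_{1}$-crystals, and the operator $s_{i}$ is, by its very definition, the reflection within each $i$-chain, hence compatible with tensor products. The $i$-chain through $b_{w'\lambda^{(1)}}\otimes\cdots\otimes b_{w'\lambda^{(m)}}$ stays inside the product $B(p_{1})\otimes\cdots\otimes B(p_{m})$ of the $i$-strings containing its factors. Since every $\varepsilon_{i}(b_{w'\lambda^{(k)}})$ vanishes, this vertex is annihilated by $\tilde{e}_{i}$ and has $\varphi_{i}=p_{1}+\cdots+p_{m}=:N$, so $s_{i}$ sends it to $\tilde{f}_{i}^{N}(b_{w'\lambda^{(1)}}\otimes\cdots\otimes b_{w'\lambda^{(m)}})$. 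This last vertex has $\mathrm{wt}(\,\cdot\,)(h_{i})$-value $-N$, which is the minimal value attained in $B(p_{1})\otimes\cdots\otimes B(p_{m})$, and the unique vertex realizing it is the one with each factor at the bottom of its string, that is $\tilde{f}_{i}^{p_{1}}(b_{w'\lambda^{(1)}})\otimes\cdots\otimes\tilde{f}_{i}^{p_{m}}(b_{w'\lambda^{(m)}})$. Together with $w=s_{i}w'$ and the induction hypothesis, this yields the first equality for $w$ and closes the induction. (Alternatively, the identity follows from a direct signature-rule computation: since all $\varepsilon_{i}(b_{w'\lambda^{(k)}})=0$, no cancellation occurs in the $i$-signature of the tensor, so $s_{i}$ acts on this vertex as $\tilde{f}_{i}^{N}$, and a short induction on $N$ using (\ref{tens_crys}) shows that $\tilde{f}_{i}$ empties the factors one after another from right to left.)

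For the second assertion, once the first equality is established, each component $b_{w\lambda^{(k)}}$ of $w\cdot b_{\lambda^{(1)}}\otimes\cdots\otimes b_{\lambda^{(m)}}$ lies in the orbit $O(\lambda^{(k)})\subset B(\lambda^{(k)})$, and the two required vanishings --- $\varepsilon_{i}(b_{w\lambda^{(k)}})=0$ when $\ell(s_{i}w)=\ell(w)+1$, and $\varphi_{i}(b_{w\lambda^{(k)}})=0$ when $\ell(s_{i}w)=\ell(w)-1$ --- are exactly what (\ref{i-chain_orbit}), read inside $B(\lambda^{(k)})$, asserts. The main obstacle is thus entirely concentrated in the computation of $s_{i}$ on a tensor product of vertices annihilated by $\tilde{e}_{i}$ performed above; everything else is bookkeeping with (\ref{i-chain_orbit}) and the $W$-stability of the orbits.
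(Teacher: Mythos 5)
Your proof is correct and follows exactly the route the paper indicates for this lemma (the paper's proof is the one-line remark that the statement "follows from (\ref{i-chain_orbit}), the definition of the action of the generators $s_{i}$ and an easy induction on the length of $w$"). You have simply spelled out the inductive step in full, and your signature-rule/weight argument identifying $s_{i}\cdot(b_{1}\otimes\cdots\otimes b_{m})$ with the product of the bottoms of the $i$-strings when all $\varepsilon_{i}(b_{k})=0$ is a valid way to make that step precise.
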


\begin{proof}
This follows from (\ref{i-chain_orbit}), the definition of the action of the
generators $s_{i}$ and an easy induction on the length of $w$.
\end{proof}

\bigskip

Now, consider $w\cdot b_{\lambda,\mu}\in O_{\lambda,\mu}(\lambda+\mu)\subset
B(\lambda)\otimes B(\mu)$.

\begin{lemma}
\label{Lemm_orbit-tens}Assume $w\in W$. Then, we have
\[
w\cdot b_{\lambda,\mu}=b_{p_{\lambda}(w)\lambda}\otimes b_{p_{\mu}(w)\mu}.
\]

\end{lemma}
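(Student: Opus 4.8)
The plan is to reduce to Lemma~\ref{Lem_W_tens_prod} by first observing that the orbit $O_{\lambda,\mu}(\lambda+\mu)$ inside $B(\lambda)\otimes B(\mu)$ is itself the orbit of a highest weight vertex in a tensor product, so that Lemma~\ref{Lem_W_tens_prod} applies with $m=2$ and gives $w\cdot b_{\lambda,\mu}=b_{w\lambda}\otimes b_{w\mu}$. The content of the present lemma is therefore the identification $b_{w\lambda}=b_{p_\lambda(w)\lambda}$ and $b_{w\mu}=b_{p_\mu(w)\mu}$ inside the individual crystals $B(\lambda)$ and $B(\mu)$. This is immediate from the definitions: $b_{w\lambda}$ denotes the unique vertex of $B(\lambda)$ of weight $w\lambda$, and since $w=p_\lambda(w)v$ with $v\in W_\lambda$ we have $w\lambda=p_\lambda(w)v\lambda=p_\lambda(w)\lambda$; hence $b_{w\lambda}$ and $b_{p_\lambda(w)\lambda}$ are the same vertex. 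The same argument applies to $\mu$.

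More carefully, I would write the argument so that the appeal to Lemma~\ref{Lem_W_tens_prod} is clean. Since $b_{\lambda,\mu}=b_\lambda\otimes b_\mu$ is a highest weight vertex of weight $\lambda+\mu$ in $B(\lambda)\otimes B(\mu)$, every vertex of $O_{\lambda,\mu}(\lambda+\mu)$ has the form $w\cdot(b_\lambda\otimes b_\mu)$ for some $w\in W$, and Lemma~\ref{Lem_W_tens_prod} (in the case of two weights) yields $w\cdot(b_\lambda\otimes b_\mu)=b_{w\lambda}\otimes b_{w\mu}$, where on each factor $b_{w\lambda}$ is by definition the unique vertex of that weight in $B(\lambda)$, and likewise $b_{w\mu}$ in $B(\mu)$. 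Then I invoke the decomposition $w=p_\lambda(w)v_\lambda$ with $v_\lambda\in W_\lambda$ fixing $\lambda$, so $w\lambda=p_\lambda(w)\lambda$ and therefore $b_{w\lambda}=b_{p_\lambda(w)\lambda}$; symmetrically $b_{w\mu}=b_{p_\mu(w)\mu}$. Combining gives $w\cdot b_{\lambda,\mu}=b_{p_\lambda(w)\lambda}\otimes b_{p_\mu(w)\mu}$, as claimed.

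There is essentially no obstacle here; the lemma is a bookkeeping statement matching two pieces of notation, and the only thing to be careful about is that the vertices $b_{w\lambda}\in B(\lambda)$ are genuinely \emph{labelled by weights} (so that $w$ and $p_\lambda(w)$, having the same action on $\lambda$, produce the same vertex), which is exactly the remark made right after the definition of $O(\lambda)$ in the excerpt: $b_{w\lambda}$ is the unique vertex of $B(\lambda)$ of weight $w\lambda$. The one point worth flagging explicitly for the reader is that the \emph{tensor} $b_{p_\lambda(w)\lambda}\otimes b_{p_\mu(w)\mu}$ need not be of the form $u\cdot(b_\lambda\otimes b_\mu)$ for a single $u$ if one tried to read it naively — the correct single element is $w$ itself — but this is not needed, since the lemma only asserts the equality of the two displayed elements of $B(\lambda)\otimes B(\mu)$, which we have just established.
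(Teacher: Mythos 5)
Your proof is correct, and it diverges from the paper's in the second half. Both arguments begin by invoking Lemma~\ref{Lem_W_tens_prod} to write $w\cdot b_{\lambda,\mu}=b_{w\lambda}\otimes b_{w\mu}$ with each factor lying in the orbit of the corresponding highest weight vertex. From there the paper fixes a reduced expression of $w\in W^{\lambda+\mu}$, reads it as a directed path in $O_{\lambda,\mu}(\lambda+\mu)$, and uses the tensor product rules~(\ref{tens_crys}) to project this path onto directed paths in $O_{\lambda}(b_{\lambda})$ and $O_{\mu}(b_{\mu})$; the correspondence between directed paths and elements of $W^{\lambda}$, $W^{\mu}$ from \S\ref{subsub_reduc_strongBO} then forces the components to be indexed by $p_{\lambda}(w)$ and $p_{\mu}(w)$. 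You instead observe that $b_{w\lambda}$ is by definition the unique vertex of $B(\lambda)$ of weight $w\lambda$, and that $w\lambda=p_{\lambda}(w)\lambda$ because $w=p_{\lambda}(w)v$ with $v\in W_{\lambda}$, so $b_{w\lambda}=b_{p_{\lambda}(w)\lambda}$ by uniqueness of the vertex of that (extremal) weight; likewise for $\mu$. This weight-uniqueness argument is shorter and fully justified by the paper's own remark following the definition of $O(\lambda)$, and it even spares the reduction to $w\in W^{\lambda+\mu}$ that the paper performs. What the paper's path argument buys in exchange is an explicit description of how a reduced word for $w$ projects to reduced words for $p_{\lambda}(w)$ and $p_{\mu}(w)$ via the automaton picture, which is in the spirit of how the orbit is used later; but for the stated equality your route suffices.
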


\begin{proof}
We can assume that $w\in W^{\lambda+\mu}$ and set $w\cdot b_{\lambda,\mu
}=b_{1}\otimes b_{2}$.\ By Lemma \ref{Lem_W_tens_prod}, we know that $b_{1}\in
O_{\lambda}(b_{\lambda})$ and $b_{2}\in O_{\mu}(b_{\mu})$. Thus we can set
$w\cdot b_{\lambda,\mu}=b_{w_{L}\lambda}\otimes b_{w_{R}\lambda}$ with
$(w_{L},w_{R})\in W^{\lambda}\times W^{\mu}$. Moreover, if we fix a reduced
expression of $w=s_{i_{1}}\cdots s_{i_{m}}\in W^{\lambda+\mu}$, we get a
directed path $\pi_{(\lambda,\mu)}^{w}$ in $O_{\lambda,\mu}(\lambda+\mu)$ from
$b_{\lambda,\mu}$ to $b$ obtained by applying crystals operators $\tilde
{f}_{i},i\in I$. By using the tensor product rules (\ref{tens_crys}) for these
operators, this thus yields also a directed path $\pi_{\lambda}^{w_{L}}$ in
$O_{\lambda}(b_{\lambda})$ from $b_{\lambda}$ to $b_{w_{L}\lambda}$ and a
directed path $\pi_{\mu}^{w_{R}}$ in $O_{\mu}(b_{\mu})$ from $b_{\mu}$ to
$b_{w_{R}\mu}$.\ The equivalence between directed paths and elements in
$W^{\lambda}$ and $W^{\mu}$ (see \S \ref{subsub_reduc_strongBO}) then imposes
that we have $(w_{L},w_{R})=(p_{\lambda}(w),p_{\mu}(w))$.
\end{proof}

\bigskip

Consider $b=u\otimes v$ in $B_{\lambda,\mu}(\lambda+\mu)$ and set%
\[
K^{L}(b)=u^{L}\otimes v^{L},\quad K^{R}(b)=u^{R}\otimes v^{R}.
\]

\begin{lemma}
\label{Lem_K_tens}We have $u^{L}=K^{L}(u)$ and $v^{R}=K^{R}(v)$.
\end{lemma}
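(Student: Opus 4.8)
The plan is to use the $m$-dilatation $K_m'$ from Lemma~\ref{Lem_Kmtild} rather than the naive dilatation $K_m$, since $K_m'$ respects the tensor factorization. Fix $m$ with sufficiently many factors so that, by Theorem~\ref{Th_Dila}, $K_m(b)=b_{w_1(\lambda+\mu)}\otimes\cdots\otimes b_{w_m(\lambda+\mu)}$ with $w_1\trianglelefteq\cdots\trianglelefteq w_m$ in $W^{\lambda+\mu}$, so that $K^L(b)=b_{w_1(\lambda+\mu)}$ and $K^R(b)=b_{w_m(\lambda+\mu)}$. Write $b=u\otimes v$ with $u\in B(\lambda)$, $v\in B(\mu)$. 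Enlarging $m$ if necessary, I may also assume $K_m(u)=b_{u_1\lambda}\otimes\cdots\otimes b_{u_m\lambda}$ with $u_1\trianglelefteq\cdots\trianglelefteq u_m$ in $W^\lambda$, and $K_m(v)=b_{v_1\mu}\otimes\cdots\otimes b_{v_m\mu}$ with $v_1\trianglelefteq\cdots\trianglelefteq v_m$ in $W^\mu$; thus $K^L(u)=b_{u_1\lambda}$, $K^R(u)=b_{u_m\lambda}$, $K^L(v)=b_{v_1\mu}$, $K^R(v)=b_{v_m\mu}$.

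Next I would compare the two dilatations. Since $b=u\otimes v$ lies in the principal component $B_{\lambda,\mu}(\lambda+\mu)$, both $K_m$ and $K_m'$ are $m$-dilatations of that abstract crystal $B(\lambda+\mu)$ (Theorem~\ref{Th_Dila}(1) and Lemma~\ref{Lem_Kmtild}), and an $m$-dilatation of a highest weight crystal is unique once the image of the highest weight vertex is fixed; here both send $b_{\lambda,\mu}$ into $B(b_{\lambda,\mu}^{\otimes m})$ with highest weight vertex $b_{\lambda,\mu}^{\otimes m}$. Actually what I need is just that $K_m'(b)$, being an element of $B(b_\lambda^{\otimes m}\otimes b_\mu^{\otimes m})$ obtained from $b$ by an $m$-dilatation, must have the same sequence of Weyl group elements attached to it as $K_m(b)$, by the realization-independence in Theorem~\ref{Th_Dila}(4b). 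So if I decompose $K_m'(b)=K_m(u)\otimes K_m(v)$ and regroup its $2m$ tensor factors appropriately, the extreme factors must be $b_{w_1(\lambda+\mu)}$-type objects; more precisely I want to identify the left end of $K_m'(b)$ with $K^L(b)$-data and the right end with $K^R(b)$-data.

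The cleanest route: by Lemma~\ref{Lem_Kmtild}, $K_m'(b)=K_m(u)\otimes K_m(v)=(b_{u_1\lambda}\otimes\cdots\otimes b_{u_m\lambda})\otimes(b_{v_1\mu}\otimes\cdots\otimes b_{v_m\mu})$ is an $m$-dilatation image of $b$. Now I invoke the part of Theorem~\ref{Th_Dila}(3) that says the extreme components $b_{w_1(\lambda+\mu)}$ and $b_{w_m(\lambda+\mu)}$ do not depend on $m$ and are intrinsic. Apply Theorem~\ref{Th_Dila}(2) directly to the crystal $B(\lambda)^{\otimes m}\otimes B(\mu)^{\otimes m}$ viewed as containing a copy of $B((\lambda+\mu))$ via $b\mapsto K_m'(b)$: the first tensor factor of any element of $O_{\ldots}$ of this form that occurs, read off from a reduced word computation, is $b_{u_1\lambda}$, so $K^L(b)=b_{u_1\lambda}\otimes(\text{something})$, forcing $u^L=b_{u_1\lambda}=K^L(u)$. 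Symmetrically, reading from the right, the last factor is $b_{v_m\mu}$, so $v^R=b_{v_m\mu}=K^R(v)$. Formally this uses that $K_m'(b)\in O$ iff $b\in O(\lambda+\mu)\cap$ (appropriate set), together with Lemma~\ref{Lemm_orbit-tens} and Lemma~\ref{Lem_W_tens_prod} to split the orbit data across the two halves.

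The main obstacle I anticipate is bookkeeping: justifying rigorously that the leftmost factor of $K_m'(b)$ equals $K^L(u)$ and the rightmost equals $K^R(v)$, i.e.\ that ``taking the extreme factor'' commutes with the regrouping of $B(\lambda)^{\otimes m}\otimes B(\mu)^{\otimes m}$ into $\big(B(\lambda)\otimes B(\mu)\big)^{\otimes m}$ up to the principal component. This should follow by tracking a reduced decomposition of the relevant $w\in W^{\lambda+\mu}$ through the tensor product rules~(\ref{tens_crys}) exactly as in the proof of Lemma~\ref{Lemm_orbit-tens}, noting the leftmost tensor factor only ever gets acted on by $\tilde f_i$ when $\varepsilon_i$ of all factors to its left vanish — but here it is itself leftmost, so its evolution is governed solely by the $\lambda$-data, giving $b_{u_1\lambda}$; dually for the right end. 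I would spell this out by induction on $\ell(w)$. The statement about $u^R$ and $v^L$ is deliberately not claimed, because the inner factors do mix the two sides; only the outer ones are clean, which is exactly what the lemma asserts.
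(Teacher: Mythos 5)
Your architecture matches the paper's: both arguments compare the two dilatations $K_m$ and $K_m'$ of $B(b_{\lambda}\otimes b_{\mu})$ and read off the extreme tensor factors, and you correctly isolate the one nontrivial point, namely that the leftmost of the $2m$ individual factors of $K_m(b)\in (B(\lambda)\otimes B(\mu))^{\otimes m}$ (which is $u^{L}$) coincides with the leftmost factor of $K_m'(b)\in B(\lambda)^{\otimes m}\otimes B(\mu)^{\otimes m}$ (which is $K^{L}(u)$), and dually on the right. But the justification you offer for that point does not hold up. The ``realization-independence'' in Theorem \ref{Th_Dila} concerns the sequence of weights attached to the $m$-component decomposition of $K_m(b)$; it says nothing about how the $2m$ individual factors are matched when the two target crystals order them differently ($\lambda\mu\lambda\mu\cdots$ versus $\lambda\cdots\lambda\mu\cdots\mu$), which is exactly what is at stake. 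Your ``tracking'' argument rests on the claim that the evolution of the leftmost factor is governed solely by the $\lambda$-data; this is false: by the signature rule underlying (\ref{tens_crys}), whether $\tilde f_i$ acts on the first factor depends on $\varphi_i$ and $\varepsilon_i$ of all the remaining factors, and those remaining factors sit in different orders in the two tensor products. Finally, the reduction to a reduced decomposition of some $w\in W^{\lambda+\mu}$ and induction on $\ell(w)$ via Lemma \ref{Lemm_orbit-tens} only makes sense for vertices of the orbit $O_{\lambda,\mu}(\lambda+\mu)$, whereas $b=u\otimes v$ is an arbitrary vertex of the principal component.

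The paper closes this gap with a structural fact you do not supply: the unique crystal isomorphism $\mathrm{I}:B(b_{\lambda,\mu}^{\otimes m})\rightarrow B(b_{\lambda}^{\otimes m}\otimes b_{\mu}^{\otimes m})$ satisfies $K_m'=\mathrm{I}\circ K_m$ (both are $m$-dilatations sending $b_{\lambda,\mu}$ to $b_{\lambda}^{\otimes m}\otimes b_{\mu}^{\otimes m}$, so they agree by Lemma \ref{Lem_Kmtild} and uniqueness), and $\mathrm{I}$ factors as a composition of combinatorial $\mathrm{R}$-matrices each acting on an adjacent pair of interior factors; since the first factor is already a $B(\lambda)$ and the last already a $B(\mu)$, none of these $\mathrm{R}$-matrices touches the extreme positions, so $\mathrm{I}$ fixes them and the two leftmost (resp.\ rightmost) factors agree. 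Some input of this kind about the isomorphism between the two orderings is needed; it cannot be replaced by bookkeeping on the signature rule alone.
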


\begin{proof}
For $m$ with sufficiently many factors, we get by definition of $K^{L}(b)$ and
$K^{R}(b)$%
\begin{equation}
K_{m}(b)=K^{L}(b)\otimes\cdots\otimes K^{R}(b)\label{K_m(l+m)}%
\end{equation}
where $K_{m}$ is the crystal embedding from $B(b_{\lambda,\mu})$ in
$B(b_{\lambda,\mu}^{\otimes m})$ defined in (\ref{embdedd}). Now the crystals
$B(b_{\lambda,\mu}^{\otimes m})$ and $B(b_{\lambda}^{\otimes m}\otimes b_{\mu
}^{m})$ are isomorphic for their highest weight vertices $b_{\lambda,\mu
}^{\otimes m}=(b_{\lambda}\otimes b_{\mu})^{m}$ and $b_{\lambda}^{\otimes
m}\otimes b_{\mu}^{m}$ have the same highest weight $m(\lambda+\mu)$.$\ $The
isomorphism $\mathrm{I}$ from $B(b_{\lambda,\mu}^{\otimes m})$ and
$B(b_{\lambda}^{\otimes m}\otimes b_{\mu}^{m})$ and its converse
$\mathrm{I}^{-1}$ are obtained by composing $\mathrm{R}$-matrices whose
actions on the previous highest weight vertices reduce to the flip of
components $b_{\lambda}$ and $b_{\mu}$.\ In particular $\mathrm{I}$ and
$\mathrm{I}^{-1}$ fix the leftmost and rightmost components in the vertices of
$B(b_{\lambda,\mu}^{\otimes m})$ and $B(b_{\lambda}^{\otimes m}\otimes b_{\mu
}^{m})$. Since $m$ can be any integer with sufficiently many factors, one can
choose such a integer $m$ so that (\ref{K_m(l+m)}) holds and simultaneously%
\[
K_{m}^{\prime}(b)=K_{m}(u)\otimes K_{m}(v)=K^{L}(u)\otimes\cdots\otimes
K^{R}(u)\otimes K^{L}(v)\otimes\cdots\otimes K^{R}(v)
\]
where $K_{m}^{\prime}=\mathrm{I}\circ K_{m}$ by Lemma \ref{Lem_Kmtild}. Since
$\mathrm{I}^{-1}$ fixes the leftmost and rightmost components in
$K_{m}^{\prime}(b)$ we are done.
\end{proof}

\bigskip

Now we can show that the action of $W$ commutes with the flip $F$ on the orbit
$O_{\lambda,\mu}(\lambda+\mu)$.

\begin{proposition}
For any $w\in W$ and any vertex $b\in O_{\lambda,\mu}(\lambda+\mu)$, we have
\[
w\circ F(b)=F\circ w(b).
\]

\end{proposition}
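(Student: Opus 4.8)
The plan is to reduce the statement to a computation on the orbit $O_{\lambda,\mu}(\lambda+\mu)$, where by Lemma~\ref{Lemm_orbit-tens} every vertex has the very explicit form $b_{p_\lambda(w)\lambda}\otimes b_{p_\mu(w)\mu}$. Since $W$ is generated by the simple reflections $s_i$, it suffices to prove $s_i\circ F(b)=F\circ s_i(b)$ for each $i\in I$ and each $b\in O_{\lambda,\mu}(\lambda+\mu)$; the general statement then follows by a trivial induction on $\ell(w)$, writing $w=s_i w'$ with $\ell(w)=\ell(w')+1$ and using that $F$ commutes with $w'$ (which maps $O_{\lambda,\mu}(\lambda+\mu)$ to itself, so the inductive hypothesis applies to $w'(b)$) together with the $s_i$ case.

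For the generator case, write $b=b_{w'\lambda}\otimes b_{w'\mu}$ for an appropriate $w'\in W^{\lambda+\mu}$ (using Lemma~\ref{Lem_W_tens_prod}, so that the action of $w'$ on the tensor factors is componentwise). First I would dispose of the trivial subcase where $s_i$ fixes $b$, i.e. $\varepsilon_i(b)=\varphi_i(b)=0$: then $s_i$ also fixes $F(b)=b_{w'\mu}\otimes b_{w'\lambda}$ because by Lemma~\ref{Lem_W_tens_prod} each component $b_{w'\lambda}$, $b_{w'\mu}$ separately has $\varepsilon_i=0$ or $\varphi_i=0$, and one checks via the tensor product rules~(\ref{tens_crys}) that a tensor product of such ``extremal'' vertices is $s_i$-fixed iff both factors are. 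In the remaining subcase $\ell(s_iw')=\ell(w')\pm1$, say $\ell(s_iw')=\ell(w')+1$, so that $\varepsilon_i(b_{w'\lambda})=\varepsilon_i(b_{w'\mu})=0$ by Lemma~\ref{Lem_W_tens_prod}; then on $b=b_{w'\lambda}\otimes b_{w'\mu}$ the tensor rule~(\ref{tens_crys}) gives $s_i(b)=\tilde f_i^{\varphi_i(b_{w'\lambda})}b_{w'\lambda}\otimes\tilde f_i^{\varphi_i(b_{w'\mu})}b_{w'\mu}=b_{s_iw'\lambda}\otimes b_{s_iw'\mu}$, which is just $(s_iw')\cdot b_{\lambda,\mu}$ reinterpreted via Lemma~\ref{Lemm_orbit-tens}; similarly on $F(b)=b_{w'\mu}\otimes b_{w'\lambda}$ we get $s_i\circ F(b)=b_{s_iw'\mu}\otimes b_{s_iw'\lambda}=F(s_i(b))$. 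The case $\ell(s_iw')=\ell(w')-1$ is symmetric with $\tilde e_i$ in place of $\tilde f_i$.

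The main obstacle is the bookkeeping in that last step: one must be careful that applying $s_i$ componentwise really agrees with applying $s_i$ to the tensor product, which is exactly the content of~(\ref{i-chain_orbit}) combined with Lemma~\ref{Lem_W_tens_prod}, namely that on an orbit vertex the two factors ``move in the same direction'' under $s_i$, so no cross-terms appear in the tensor product rule. Once that coherence is in hand the commutation with $F$ is immediate because $F$ merely swaps the two factors and $s_i$ acts identically (factor by factor) on both sides. An alternative, essentially equivalent route would be to invoke Lemma~\ref{Lemm_orbit-tens} directly: $F\circ w(b_{\lambda,\mu}\text{-orbit vertex})$ and $w\circ F(\cdot)$ both equal $b_{p_\mu(w)\mu}\otimes b_{p_\lambda(w)\lambda}$, since $p_\lambda$ and $p_\mu$ are computed independently of the order of the factors; I would likely present this shorter argument, with the generator computation above relegated to a remark if needed.
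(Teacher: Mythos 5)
Your proposal is correct, and the ``shorter argument'' you say you would actually present is exactly the paper's proof: writing $b=u\cdot b_{\lambda,\mu}$, one applies Lemma~\ref{Lemm_orbit-tens} both in $B(\lambda)\otimes B(\mu)$ and in $B(\mu)\otimes B(\lambda)$ to see that $F\circ w(b)$ and $w\circ F(b)$ both equal $b_{p_{\mu}(wu)\mu}\otimes b_{p_{\lambda}(wu)\lambda}$. Your longer generator-by-generator induction is also sound, but note that it essentially re-derives the componentwise action of $W$ on orbit vertices, which is already the content of Lemmas~\ref{Lem_W_tens_prod} and~\ref{Lemm_orbit-tens}; what it buys is an explicit check of where the tensor-product rules~(\ref{tens_crys}) enter (the ``no cross-terms'' point you correctly identify), at the cost of the case analysis on $\ell(s_iw')$. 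The one step to state carefully in the short version is that $F(u\cdot b_{\lambda,\mu})=u\cdot b_{\mu,\lambda}$, i.e.\ that Lemma~\ref{Lemm_orbit-tens} applied to $B(\mu)\otimes B(\lambda)$ produces the flipped pair $b_{p_{\mu}(u)\mu}\otimes b_{p_{\lambda}(u)\lambda}$ --- this is what your remark that $p_{\lambda}$ and $p_{\mu}$ are order-independent amounts to, and it is precisely how the paper closes the argument.
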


\begin{proof}
Write $b=u(b_{\lambda}\otimes b_{\mu})$. One the one hand , Lemma
\ref{Lemm_orbit-tens} gives $w(b)=wu(b_{\lambda}\otimes b_{\mu})=b_{p_{\lambda
}(wu)\lambda}\otimes b_{p_{\mu}(wu)\mu}$ and thus $F\circ w(b)=b_{p_{\mu
}(wu)\mu}\otimes b_{p_{\lambda}(wu)\mu}$. On the other hand, we get $w\circ
F(b)=w\circ F(b_{p_{\lambda}(u)\lambda}\otimes b_{p_{\mu}(u)\mu})=w(b_{p_{\mu
}(u)\mu}\otimes b_{p_{\lambda}(u)\lambda})=wu(b_{\mu}\otimes b_{\lambda
})=b_{p_{\mu}(wu)\mu}\otimes b_{p_{\lambda}(wu)\lambda}$. Therefore, we have
$w\circ F(b)=F\circ w(b)$ as desired.
\end{proof}

\begin{corollary}
\label{Cor_R=F}The maps $\mathrm{R}$ and $\mathrm{F}$ coincide on
$O_{\lambda,\mu}(\lambda+\mu)$.
\end{corollary}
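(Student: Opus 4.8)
The plan is to deduce the corollary directly from the preceding proposition together with the uniqueness statement built into the definition of $\mathrm{R}$. Recall that $\mathrm{R}$ is defined as the \emph{unique} crystal isomorphism from $B_{\lambda,\mu}(\lambda+\mu)$ to $B_{\mu,\lambda}(\lambda+\mu)$, and that $O_{\lambda,\mu}(\lambda+\mu)=W\cdot b_{\lambda,\mu}$ is the orbit of the highest weight vertex $b_{\lambda,\mu}=b_\lambda\otimes b_\mu$ inside $B_{\lambda,\mu}(\lambda+\mu)$. So the first step is to observe that on the highest weight vertices the two maps agree: $\mathrm{R}(b_\lambda\otimes b_\mu)=b_\mu\otimes b_\lambda=\mathrm{F}(b_\lambda\otimes b_\mu)$, since a crystal isomorphism must send a highest weight vertex to a highest weight vertex of the same weight, and $b_\mu\otimes b_\lambda$ is the unique such vertex in $B_{\mu,\lambda}(\lambda+\mu)$.

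The second step is to propagate this equality along the $W$-action. Since $\mathrm{R}$ is a crystal isomorphism, it commutes with the action of $W$ on crystals: $\mathrm{R}(w\cdot b)=w\cdot\mathrm{R}(b)$ for all $w\in W$ and $b\in B_{\lambda,\mu}(\lambda+\mu)$ (the $W$-action is defined purely in terms of the crystal operators, hence is preserved by any crystal morphism). By the Proposition just proved, $\mathrm{F}$ also commutes with the $W$-action on $O_{\lambda,\mu}(\lambda+\mu)$, i.e. $\mathrm{F}(w\cdot b)=w\cdot\mathrm{F}(b)$ for $b$ in that orbit. Now take an arbitrary vertex $b\in O_{\lambda,\mu}(\lambda+\mu)$ and write $b=w\cdot b_{\lambda,\mu}$ for some $w\in W$. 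Then
\[
\mathrm{R}(b)=\mathrm{R}(w\cdot b_{\lambda,\mu})=w\cdot\mathrm{R}(b_{\lambda,\mu})=w\cdot(b_\mu\otimes b_\lambda)=w\cdot\mathrm{F}(b_{\lambda,\mu})=\mathrm{F}(w\cdot b_{\lambda,\mu})=\mathrm{F}(b),
\]
which is exactly the claim.

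I do not expect any serious obstacle here: the corollary is essentially a formal consequence of the Proposition, and the only point requiring minor care is that one really can reach every element of $O_{\lambda,\mu}(\lambda+\mu)$ as $w\cdot b_{\lambda,\mu}$ — which is immediate from the very definition of the orbit — and that the $W$-action on the target crystal $B_{\mu,\lambda}(\lambda+\mu)$ is the restriction of the $W$-action on $B(\mu)\otimes B(\lambda)$, so that $\mathrm{F}$ and $\mathrm{R}$ are being compared as genuine maps of $W$-sets. The verification that $\mathrm{R}$ intertwines the $W$-actions is standard (any crystal isomorphism does, since $s_i$ acts by the reflection in each $i$-chain and crystal isomorphisms preserve $i$-chains), so one can invoke it without further comment.
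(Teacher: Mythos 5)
Your proposal is correct and follows essentially the same route as the paper: both arguments note that $\mathrm{R}$ and $\mathrm{F}$ agree on the highest weight vertex $b_{\lambda,\mu}$ (sending it to $b_{\mu,\lambda}$), that $\mathrm{R}$ commutes with the $W$-action because it is a crystal isomorphism, and that $\mathrm{F}$ commutes with the $W$-action on the orbit by the preceding proposition, then propagate the equality to every $b=w\cdot b_{\lambda,\mu}$. No gaps; your extra remarks on uniqueness of the highest weight vertex and on the $W$-equivariance of crystal isomorphisms are exactly the justifications the paper leaves implicit.
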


\begin{proof}
Consider $b=w\cdot b_{\lambda,\mu}$ in $O_{\lambda,\mu}(\lambda+\mu)$. On the
one hand side, we have
\[
\mathrm{R}(w\cdot b_{\lambda,\mu})=w\cdot\mathrm{R}(b_{\lambda,\mu})=w\cdot
b_{\mu,\lambda}%
\]
because $\mathrm{R}$ is a crystal isomorphism (and thus commutes with the
action of $W$) and $\mathrm{R}(b_{\lambda,\mu})=b_{\mu,\lambda}$. On the other
side we get%
\[
\mathrm{F}(w\cdot b_{\lambda,\mu})=w\cdot\mathrm{F}(b_{\lambda,\mu})=w\cdot
b_{\mu,\lambda}%
\]
by using the previous proposition and the equality $\mathrm{F}(b_{\lambda,\mu
})=b_{\mu,\lambda}$.
\end{proof}

\subsection{Reduction to smaller dominant weights}

Denote by $\prec$ the partial dominant order on $P_{+}$ such that $\mu
\preceq\lambda$ if and only if $\lambda-\mu\in P_{+}$ and resume the notation
of \S \ref{subsec_Fock}. Our aim is now to compute the left and right Keys of
any vertex in $B_{\lambda,\mu}(\lambda+\mu)$ as a tensor product of Keys in
$B(\lambda)$ and $B(\mu)$. For any $b=u\otimes v$ in $B_{\lambda,\mu}%
(\lambda+\mu),$ set $\mathrm{R}(b)=\widetilde{b}=\widetilde{v}\otimes
\widetilde{u}$ in $B_{\mu,\lambda}(\lambda+\mu)$.

\begin{theorem}
\label{Th_Fund}We have%
\[
K^{L}(b)=K^{L}(u)\otimes K^{L}(\widetilde{v})\text{ }\quad\text{and }\quad
K^{R}(b)=K^{R}(\widetilde{u})\otimes K^{R}(v).
\]

\end{theorem}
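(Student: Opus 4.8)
The plan is to prove the two formulas for $K^L(b)$ and $K^R(b)$ separately, treating them symmetrically. Consider first the left Key. By Lemma~\ref{Lem_K_tens} applied directly to $b = u\otimes v$ in $B_{\lambda,\mu}(\lambda+\mu)$, we already know that $K^L(b) = u^L \otimes v^L$ with $u^L = K^L(u)$; so the only thing to establish is that $v^L = K^L(\widetilde v)$, where $\mathrm{R}(b) = \widetilde v \otimes \widetilde u$. For the right Key, the situation is mirror-symmetric: Lemma~\ref{Lem_K_tens} gives $K^R(b) = u^R \otimes v^R$ with $v^R = K^R(v)$, and the remaining point is $u^R = K^R(\widetilde u)$. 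So the whole theorem reduces to identifying the ``inner'' component of each Key with a Key computed on the $\mathrm{R}$-matrix image.

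First I would exploit that $\mathrm{R}$ is a crystal isomorphism from $B_{\lambda,\mu}(\lambda+\mu)$ onto $B_{\mu,\lambda}(\lambda+\mu)$, both realizations of the abstract crystal $B(\lambda+\mu)$. By Remark~\ref{Rq_impo}(2), the Keys commute with crystal isomorphisms: applying (\ref{K_comm_fi}) to $\phi = \mathrm{R}$ gives
\[
K^L(\mathrm{R}(b)) = \mathrm{R}(K^L(b)) \quad\text{and}\quad K^R(\mathrm{R}(b)) = \mathrm{R}(K^R(b)).
\]
Now apply Lemma~\ref{Lem_K_tens} to $\mathrm{R}(b) = \widetilde v \otimes \widetilde u \in B_{\mu,\lambda}(\lambda+\mu)$: writing $K^L(\mathrm{R}(b)) = (\widetilde v)^L \otimes (\widetilde u)^L$ and $K^R(\mathrm{R}(b)) = (\widetilde v)^R \otimes (\widetilde u)^R$, that lemma yields $(\widetilde v)^L = K^L(\widetilde v)$ and $(\widetilde u)^R = K^R(\widetilde u)$. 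The crucial observation is that the Keys $K^L(b)$ and $K^R(b)$ lie in the orbit $O_{\lambda,\mu}(\lambda+\mu)$ of the highest weight vertex, and by Corollary~\ref{Cor_R=F} the map $\mathrm{R}$ restricted to that orbit is just the flip $\mathrm{F}$. Hence $\mathrm{R}(K^L(b)) = \mathrm{F}(u^L \otimes v^L) = v^L \otimes u^L$ and similarly $\mathrm{R}(K^R(b)) = v^R \otimes u^R$.

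Combining these: from $K^L(\mathrm{R}(b)) = \mathrm{R}(K^L(b))$ we get $(\widetilde v)^L \otimes (\widetilde u)^L = v^L \otimes u^L$, so comparing the left components, $v^L = (\widetilde v)^L = K^L(\widetilde v)$; together with $u^L = K^L(u)$ this gives $K^L(b) = K^L(u) \otimes K^L(\widetilde v)$. Dually, from $K^R(\mathrm{R}(b)) = \mathrm{R}(K^R(b))$ we read off $(\widetilde u)^R \otimes (\widetilde v)^R = v^R \otimes u^R$, so the right component gives $u^R = (\widetilde u)^R = K^R(\widetilde u)$; with $v^R = K^R(v)$ this is $K^R(b) = K^R(\widetilde u) \otimes K^R(v)$. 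The main subtlety to get right is making sure that $K^L(b)$ and $K^R(b)$ really do belong to $O_{\lambda,\mu}(\lambda+\mu)$ so that Corollary~\ref{Cor_R=F} applies — this follows because Keys are by definition vertices of the form $b_{w(\lambda+\mu)}$, which lie in the $W$-orbit of $b_{\lambda,\mu}$; everything else is a bookkeeping of tensor components, and no delicate estimate is needed once Lemmas~\ref{Lem_K_tens}, Corollary~\ref{Cor_R=F}, and Remark~\ref{Rq_impo} are in place.
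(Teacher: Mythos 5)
Your proposal is correct and follows essentially the same route as the paper's proof: Lemma~\ref{Lem_K_tens} applied to both $b$ and $\mathrm{R}(b)$, the commutation of Keys with the crystal isomorphism $\mathrm{R}$ as in (\ref{K_comm_fi}), and Corollary~\ref{Cor_R=F} to replace $\mathrm{R}$ by the flip on the orbit, then a comparison of tensor components. The only difference is cosmetic: you write out the right-Key case explicitly where the paper dismisses it as symmetric, and you spell out why $K^L(b)$ lies in $O_{\lambda,\mu}(\lambda+\mu)$, which the paper leaves implicit.
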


\begin{proof}
We prove the first equality, the arguments being similar for the second
one.\ Write $K^{L}(b)=u^{L}\otimes v^{L}$.\ By Lemma \ref{Lem_K_tens}, we
first get that $u^{L}=K^{L}(u)$.\ We also have $K^{L}(\mathrm{R}%
(b))=\mathrm{R}(K^{L}(b))$ because $\mathrm{R}$ is a crystal isomorphism as in
(\ref{K_comm_fi}). By Corollary \ref{Cor_R=F} and once again, Lemma
\ref{Lem_K_tens} we deduce the equality%
\[
K^{L}(\mathrm{R}(b))=K^{L}(\widetilde{v}\otimes\widetilde{u})=K^{L}%
(\widetilde{v})\otimes\widetilde{u}^{L}=\mathrm{R}(K^{L}(b))=\mathrm{R}%
(K^{L}(u)\otimes v^{L})=v^{L}\otimes K^{L}(u).
\]
Thus, $v^{L}=K^{L}(\widetilde{v})$ as desired.
\end{proof}

Now consider $\mathcal{S}=(\lambda^{(1)},\ldots,\lambda^{(l)})$ a sequence of
dominant weights and write $\lambda=\lambda^{(1)}+\cdots+\lambda^{(l)}$. Let
$B_{\mathcal{S}}(\lambda)$ be the unique connected component in $%
{\textstyle\bigotimes\limits_{k=1}^{l}}
B(\lambda^{(k)})$ of highest weight $\lambda$.\ Its highest weight vertex is
$b_{\mathcal{S}}=b_{\lambda^{(1)}}\otimes\cdots\otimes b_{\lambda^{(l)}}$. For
any $k=1,\ldots,l,$ denote by $\theta_{k}^{L}$ the unique crystal isomorphism
from $B_{\mathcal{S}}(\lambda)$ to $B_{\mathcal{S}_{L}^{(k)}}(\lambda)$ where
$\mathcal{S}_{L}^{(k)}=(\lambda^{(k)},\lambda^{(1)},\ldots,\lambda^{(l)})$ (in
particular $\mathcal{S}_{L}^{(1)}=\mathcal{S}$). Write similarly $\theta
_{k}^{R}$ the unique crystal isomorphism from $B_{\mathcal{S}}(\lambda)$ to
$B_{\mathcal{S}_{R}^{(k)}}(\lambda)$ where $\mathcal{S}_{R}^{(k)}%
=(\lambda^{(1)},\ldots,\lambda^{(l)},\lambda^{(k)})$ (in particular
$\mathcal{S}_{R}^{(l)}=\mathcal{S}$)

For any $b=b_{1}\otimes\cdots\otimes b_{l}$ in $B_{\mathcal{S}}(\lambda)$, set
$\theta_{k}^{L}(b)=b_{1}^{L}(k)\otimes\cdots\otimes b_{l}^{L}(k)$ and
$\theta_{k}^{R}(b)=b_{1}^{R}(k)\otimes\cdots\otimes b_{l}^{R}(k)$. In
particular $b_{1}^{L}(k)\in B(\lambda^{(k)})$ and $b_{l}^{R}(k)\in
B(\lambda^{(k)})$ for any $k=1,\ldots,l$. An easy induction yields the
following corollary of Theorem \ref{Th_Fund}.

\begin{corollary}
\label{Cor_fund}For any $b\in B_{\mathcal{S}}(\lambda),$ we have
\[
K^{L}(b)=%
{\textstyle\bigotimes\limits_{k=1}^{l}}
K^{L}(b_{1}^{L}(k))\text{ }\quad\text{and }\quad K^{R}(b)=%
{\textstyle\bigotimes\limits_{k=1}^{l}}
K^{R}(b_{l}^{R}(k)).
\]

\end{corollary}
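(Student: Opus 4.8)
The plan is to run an induction on the length $l$ of the sequence $\mathcal{S}=(\lambda^{(1)},\ldots,\lambda^{(l)})$, using Theorem~\ref{Th_Fund} as the base/inductive step. For $l=1$ there is nothing to prove since $\theta_1^L=\theta_1^R=\mathrm{id}$ and $b_1^L(1)=b_1^R(1)=b$. For $l=2$ the statement is exactly Theorem~\ref{Th_Fund}, once one matches the notation: with $b=u\otimes v$, the isomorphism $\theta_2^L$ from $B_{(\lambda^{(1)},\lambda^{(2)})}(\lambda)$ to $B_{(\lambda^{(2)},\lambda^{(1)})}(\lambda)$ is precisely the $\mathrm{R}$-matrix, so $\theta_2^L(b)=\widetilde v\otimes\widetilde u$, giving $b_1^L(2)=\widetilde v$, $b_1^L(1)=u$; and Theorem~\ref{Th_Fund} reads $K^L(b)=K^L(u)\otimes K^L(\widetilde v)=K^L(b_1^L(1))\otimes K^L(b_1^L(2))$. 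Symmetrically for $K^R$ using $\theta_2^R$.

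For the inductive step, I would group $b=b_1\otimes\cdots\otimes b_l\in B_{\mathcal{S}}(\lambda)$ as $b=b_1\otimes b'$ where $b'=b_2\otimes\cdots\otimes b_l$ lies in the principal component $B_{\mathcal{S}'}(\mu)$ with $\mathcal{S}'=(\lambda^{(2)},\ldots,\lambda^{(l)})$ and $\mu=\lambda^{(2)}+\cdots+\lambda^{(l)}$. Apply Theorem~\ref{Th_Fund} to the two-fold tensor $B(\lambda^{(1)})\otimes B(\mu)\supset B_{\lambda^{(1)},\mu}(\lambda)$: writing $\mathrm{R}(b_1\otimes b')=\widetilde{b'}\otimes\widetilde{b_1}$ we get $K^R(b)=K^R(\widetilde{b_1})\otimes K^R(b')$, and then the induction hypothesis applied to $b'\in B_{\mathcal{S}'}(\mu)$ expands $K^R(b')=\bigotimes_{k=2}^{l}K^R((b')_{l-1}^R(k))$. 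The only remaining point is bookkeeping: one must check that the crystal isomorphism $\theta_k^R$ on $B_{\mathcal{S}}(\lambda)$ that cycles $\lambda^{(k)}$ to the rightmost slot factors, by uniqueness of $\mathrm{R}$-matrices between principal components, as the composite of first applying (the identity on $B(\lambda^{(1)})$ tensored with) the corresponding isomorphism $(\theta')_k^R$ on $B_{\mathcal{S}'}(\mu)$, and then, for $k=1$, the $\mathrm{R}$-matrix swapping $b_1$ past $b'$. Uniqueness of the isomorphism between two realizations of $B(\lambda)$ with the same principal highest weight vertex makes these composites automatically agree, so $(b)_l^R(k)=(b')_{l-1}^R(k)$ for $k\ge 2$ and $(b)_l^R(1)=\widetilde{b_1}$; this is exactly what is needed to glue the two formulas. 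The argument for $K^L$ is the mirror image, grouping $b=b''\otimes b_l$ with $b''=b_1\otimes\cdots\otimes b_{l-1}$.

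The step I expect to require the most care is the compatibility of the cyclic isomorphisms $\theta_k^{L},\theta_k^{R}$ with the two-term factorization — i.e.\ verifying that moving $\lambda^{(k)}$ to an end in the $l$-fold tensor really is the composite ``move it to an end inside the $(l-1)$-fold block, then swap that block past the remaining factor.'' This is true by uniqueness of crystal isomorphisms between connected components with prescribed highest weight vertex (all these maps send principal highest weight vertex to principal highest weight vertex), but it must be stated cleanly so that the indexing of the components $b_j^{L}(k),b_j^{R}(k)$ matches on both sides of the induction. Everything else is a routine unwinding of Theorem~\ref{Th_Fund} together with the fact (Corollary~\ref{Cor_R=F}, used inside the proof of Theorem~\ref{Th_Fund}) that on orbits the $\mathrm{R}$-matrix is just the flip, so no further new input is needed.
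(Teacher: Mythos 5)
Your proof is correct and follows exactly the route the paper intends: the paper offers no details beyond ``an easy induction yields the corollary from Theorem~\ref{Th_Fund},'' and your argument is precisely that induction, with the one genuinely necessary bookkeeping point (that $\theta_k^{L}$, $\theta_k^{R}$ factor through the two-term grouping by uniqueness of isomorphisms between principal components) correctly identified and justified.
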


\begin{remark}
The previous corollary reduces the computation of the Keys for a dominant
weight $\lambda$ to that of $\mathrm{R}$-matrices and Keys for dominant
weights less that $\lambda$ for the order $\prec$ on $P_{+}$. For finite types
and for affine type $A$, we shall see that this gives an efficient procedure
by decomposing $\lambda$ on the basis of fundamental weights.
\end{remark}

\subsection{Recursive computation of the strong Bruhat order}

We resume the notation of the previous \S \ of this section. In
\S \ref{subsub_reduc_strongBO}, we have also seen that the elements of
$W^{\lambda+\mu}$ are matched with the vertices of $O_{\lambda,\mu}%
(\lambda+\mu)$.

\begin{proposition}
\label{Prop_proj_tens_prod}Consider $w\cdot b_{\lambda,\mu}$ and $w^{\prime
}\cdot b_{\lambda,\mu}$ in $O_{\lambda,\mu}(\lambda+\mu)\subset B(\lambda
)\otimes B(\mu)$ with $w$ and $w^{\prime}$ in $W^{\lambda+\mu}$. Then
\[
w\cdot b_{\lambda,\mu}=b_{p_{\lambda}(w)\lambda}\otimes b_{p_{\mu}(w)\mu
}\text{ and }w^{\prime}\cdot b_{\lambda,\mu}=b_{p_{\lambda}(w^{\prime}%
)\lambda}\otimes b_{p_{\mu}(w^{\prime})\mu}.
\]
Moreover%
\[
w\trianglelefteq w^{\prime}\text{ if and only if }p_{\lambda}%
(w)\trianglelefteq p_{\lambda}(w^{\prime})\text{ and }p_{\mu}%
(w)\trianglelefteq p_{\mu}(w^{\prime}).
\]

\end{proposition}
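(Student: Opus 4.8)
The first part of the statement—the explicit description $w\cdot b_{\lambda,\mu}=b_{p_\lambda(w)\lambda}\otimes b_{p_\mu(w)\mu}$—is exactly Lemma~\ref{Lemm_orbit-tens}, applied to both $w$ and $w'$, so there is nothing new to prove there; I would simply invoke that lemma. The content is the equivalence
\[
w\trianglelefteq w'\ \Longleftrightarrow\ p_\lambda(w)\trianglelefteq p_\lambda(w')\ \text{ and }\ p_\mu(w)\trianglelefteq p_\mu(w').
\]
Since $w,w'\in W^{\lambda+\mu}$, this is precisely the statement of Lemma~\ref{Lem_SBOCosets}. So the whole proposition is a repackaging of two earlier results, and the proof is essentially: ``By Lemma~\ref{Lemm_orbit-tens} we get the two displayed tensor-product decompositions; the equivalence is then Lemma~\ref{Lem_SBOCosets}.''

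**Filling the one small gap.** The only point needing a word of care is that Lemma~\ref{Lem_SBOCosets} is stated for the projections onto $W^\lambda$ and $W^\mu$ of elements of $W^{\lambda+\mu}$, whereas here one wants the statement phrased in terms of the vertices $b_{p_\lambda(w)\lambda}$, etc. I would note that the correspondence $u\in W^\lambda\leftrightarrow b_{u\lambda}\in O(\lambda)$ recalled in \S\ref{subsub_reduc_strongBO} is a bijection, and that $b_{u\lambda}=b_{u'\lambda}$ iff $u,u'$ lie in the same coset of $W/W_\lambda$, i.e.\ iff $p_\lambda(u)=p_\lambda(u')$; hence ``$p_\lambda(w)\trianglelefteq p_\lambda(w')$'' is a well-defined condition on the vertices $b_{p_\lambda(w)\lambda}$ and $b_{p_\lambda(w')\lambda}$, and one direction of \eqref{proj} already gives the implication $\Rightarrow$. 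For $\Leftarrow$ one needs the full strength of Lemma~\ref{Lem_SBOCosets} (equivalently Theorem~2.6.1 of \cite{BB}), which handles the nontrivial direction of lifting compatible projections back to $W^{\lambda+\mu}$.

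**What is really going on / the main obstacle.** There is no genuine obstacle: the proposition is a corollary of material already assembled. If one wanted to avoid citing Lemma~\ref{Lem_SBOCosets} directly, the honest work would be reproving that lifting direction, and the natural route is the one suggested by \S\ref{subsub_reduc_strongBO}: identify reduced expressions of $w\in W^{\lambda+\mu}$ with directed paths in $O_{\lambda,\mu}(\lambda+\mu)$ from $b_{\lambda,\mu}$ to $w\cdot b_{\lambda,\mu}$, and use that (by the tensor product rules \eqref{tens_crys}) such a path projects componentwise to directed paths in $O(\lambda)$ and $O(\mu)$ realizing $p_\lambda(w)$ and $p_\mu(w)$; a subword of a reduced expression of $w'$ that is a reduced expression of $w$ then projects to subwords witnessing $p_\lambda(w)\trianglelefteq p_\lambda(w')$ and $p_\mu(w)\trianglelefteq p_\mu(w')$, and conversely one reconstructs a subword of $w'$ from compatible subwords of the two projections. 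But since Lemma~\ref{Lem_SBOCosets} is already in hand, I would keep the proof to the two-line citation above.
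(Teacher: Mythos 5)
Your proof is correct and is essentially identical to the paper's: the authors likewise dispose of the first claim by applying Lemma~\ref{Lemm_orbit-tens} to $w$ and $w'$ and observe that the equivalence is exactly Lemma~\ref{Lem_SBOCosets}. The extra remarks you add about the bijection $W^{\lambda}\leftrightarrow O(\lambda)$ are harmless but not needed beyond the two-line citation.
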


\begin{proof}
The first statement of the proposition comes by applying Lemma
\ref{Lemm_orbit-tens} to $w$ and $w^{\prime}$ and the second one is Lemma
\ref{Lem_SBOCosets}.
\end{proof}

\bigskip

Now, consider $\mathcal{S}=(\lambda^{(1)},\ldots,\lambda^{(l)})$ a sequence of
dominant weights and write $O_{\mathcal{S}}(\lambda)$ for the orbit of
$b_{\mathcal{S}}$ in $B_{\mathcal{S}}(\lambda)$.

Proposition \ref{Prop_proj_tens_prod} and an easy induction yields the
following corollary which permits a recursive computation of the strong Bruhat
order. It will be of particular interest in the following sections when the
$\lambda^{(k)},k=1,\ldots,l$ are fundamental weights and the crystal
$B(\lambda)$ has a convenient realization in terms of tableaux or abaci.

\begin{corollary}
\label{Cor_inclus_columns} \ 

\begin{enumerate}
\item For any $b_{1}\otimes\cdots\otimes b_{l}=w\cdot b_{\mathcal{S}}\in
O_{\mathcal{S}}(\lambda)$ with $w\in W^{\lambda^{(1)}+\cdots+\lambda^{(l)}}$,
there exists a unique $l$-tuple $(w_{1},\ldots,w_{l})\in\prod_{k=1}%
^{l}W^{\lambda^{(k)}}$ such that $b_{k}=w_{k}\cdot b_{\lambda^{(k)}}$ for any
$k=1,\ldots,l$.

\item We have $w_{k}=p_{\lambda^{(k)}}(w)$.

\item Given $w\cdot b_{\mathcal{S}}\in O_{\mathcal{S}}(\lambda)$ and
$w^{\prime}\cdot b_{\mathcal{S}}\in O_{\mathcal{S}}(\lambda)$ with
$w,w^{\prime}\in W^{\lambda^{(1)}+\cdots+\lambda^{(l)}}$, we have
\[
w\trianglelefteq w^{\prime}\text{ if and only if }w_{k}\trianglelefteq
w_{k}^{\prime}\text{ for any }k=1,\ldots,l.
\]

\end{enumerate}
\end{corollary}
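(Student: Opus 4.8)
The plan is to prove Corollary \ref{Cor_inclus_columns} by induction on $l$, using Proposition \ref{Prop_proj_tens_prod} as the engine for the inductive step. The base case $l=1$ is immediate: $b_1 = w\cdot b_{\mathcal S}$ with $w \in W^{\lambda^{(1)}}$, so one takes $w_1 = w = p_{\lambda^{(1)}}(w)$, and the Bruhat equivalence is a tautology. For the inductive step, I would group the tensor factors as $B_{\mathcal S}(\lambda) = B_{\lambda^{(1)},\,\mu}(\lambda)$ where $\mu = \lambda^{(2)}+\cdots+\lambda^{(l)}$, viewing $B_{\mathcal S}(\lambda)$ as the principal component inside $B(\lambda^{(1)}) \otimes B_{\mathcal S'}(\mu)$ with $\mathcal S' = (\lambda^{(2)},\ldots,\lambda^{(l)})$. (One should note that $B_{\mathcal S'}(\mu) \otimes B(\lambda^{(1)})$-type reassociations do not affect the principal component, since there is a unique connected component of highest weight $\lambda$, so the grouping is harmless.)

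Next I would apply Proposition \ref{Prop_proj_tens_prod} with the dominant weights $\lambda^{(1)}$ and $\mu$: writing $w\cdot b_{\mathcal S} = b_{p_{\lambda^{(1)}}(w)\lambda^{(1)}} \otimes c$ where $c = b_{p_\mu(w)\mu} \in O_{\mathcal S'}(\mu)$, the proposition gives
\[
w\trianglelefteq w' \iff p_{\lambda^{(1)}}(w)\trianglelefteq p_{\lambda^{(1)}}(w') \text{ and } p_\mu(w)\trianglelefteq p_\mu(w').
\]
The first factor $b_{p_{\lambda^{(1)}}(w)\lambda^{(1)}}$ already has the desired form with $w_1 = p_{\lambda^{(1)}}(w) \in W^{\lambda^{(1)}}$. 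For the second factor $c \in O_{\mathcal S'}(\mu) \subset \bigotimes_{k=2}^l B(\lambda^{(k)})$, the induction hypothesis applies (with $l-1$ weights): there is a unique $(l-1)$-tuple $(w_2,\ldots,w_l) \in \prod_{k=2}^l W^{\lambda^{(k)}}$ with $b_k = w_k\cdot b_{\lambda^{(k)}}$, namely $w_k = p_{\lambda^{(k)}}(v)$ where $v = p_\mu(w) \in W^\mu$, and Bruhat comparability of the $c$'s is equivalent to $w_k \trianglelefteq w_k'$ for all $k = 2,\ldots,l$. Combining with the proposition, $w \trianglelefteq w'$ iff $w_1 \trianglelefteq w_1'$ and $w_k \trianglelefteq w_k'$ for $k \geq 2$, which is exactly assertion (3).

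The one point requiring care — and the main (minor) obstacle — is assertion (2), the identification $w_k = p_{\lambda^{(k)}}(w)$ for $k \geq 2$. The induction hypothesis only directly gives $w_k = p_{\lambda^{(k)}}(v)$ with $v = p_\mu(w)$, so I need $p_{\lambda^{(k)}}(v) = p_{\lambda^{(k)}}(w)$ when $v = p_\mu(w)$ and $\lambda^{(k)} \preceq \mu$. This is a transitivity property of the coset projections: since $W_\mu \subseteq W_{\lambda^{(k)}}$ (the stabilizer of $\mu$ fixes each summand $\lambda^{(k)}$, as all are dominant), one has $p_{\lambda^{(k)}} \circ p_\mu = p_{\lambda^{(k)}}$, because writing $w = p_\mu(w) \cdot z$ with $z \in W_\mu \subseteq W_{\lambda^{(k)}}$ shows $w$ and $p_\mu(w)$ lie in the same coset of $W/W_{\lambda^{(k)}}$. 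I would state this as a one-line observation (or fold it into Lemma \ref{Lemm_orbit-tens}'s circle of ideas) and then conclude. Uniqueness of the tuple in (1) follows from uniqueness in the induction hypothesis together with uniqueness of the two-fold decomposition in Proposition \ref{Prop_proj_tens_prod} (equivalently, from Lemma \ref{Lem_W_tens_prod}, which forces $b_k \in O(\lambda^{(k)})$ so that each $w_k \in W^{\lambda^{(k)}}$ is uniquely determined by $b_k$ via the bijection $W^{\lambda^{(k)}} \leftrightarrow O(\lambda^{(k)})$).
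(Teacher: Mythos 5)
Your proof is correct and follows the same route the paper intends: the paper's proof is exactly ``Proposition \ref{Prop_proj_tens_prod} and an easy induction,'' and you carry out that induction by splitting off $\lambda^{(1)}$ from $\mu=\lambda^{(2)}+\cdots+\lambda^{(l)}$. The one detail you rightly flag and supply --- the transitivity $p_{\lambda^{(k)}}\circ p_{\mu}=p_{\lambda^{(k)}}$ via $W_{\mu}\subseteq W_{\lambda^{(k)}}$ --- is indeed the only nontrivial step the paper leaves implicit, and your justification of it is sound.
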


\section{Determination of the Demazure crystals by Keys in finite types}

By Theorems \ref{Th_KL} and \ref{Th_Fund}, given any dominant weight $\lambda$
expressed as a sum of fundamental weights, we can conveniently compute the
Demazure crystals $B_{w}(\lambda)$ as soon as we have efficient procedures for

\begin{itemize}
\item computing the combinatorial $\mathrm{R}$-matrix (or at least its
restriction to the principal connected components) on tensor product of
fundamental crystals (i.e. crystal with fundamental highest weights),

\item computing the Key for fundamental crystals,

\item computing the strong Bruhat order on $W^{\lambda}.$
\end{itemize}

\subsection{The finite type A}

\label{ftyA}

\label{Subsec_FinitetypeA}We start by recalling the results of Lascoux and
Sch\"{u}tzenberger \cite{Las1}.\ In type $A_{n}$, the crystal $B(\omega_{i}),$
$i=1,\ldots,n$ is conveniently realized as the set of columns of height $i$ on
$\{1<\cdots<n<n+1\}$.\ Then the dominant weight $\omega_{i}$ is minuscule
which implies that $K^{L}(C)=K^{R}(C)$ for any column $C\in B(\omega_{i}%
)$.\ Also the combinatorial \textrm{$R$}-matrices can be computed by using the
Jeu de Taquin procedure or the insertion scheme on semistandard tableaux. More
generally given a sequence $\mathcal{S}=(\omega_{i_{1}},\ldots,\omega_{i_{l}%
})$ of dominant weights such that $i_{1}\geq\cdots\geq i_{l}$, the vertices of
the crystal $B_{\mathcal{S}}(\lambda)$ with $\lambda=\omega_{i_{1}}%
+\cdots+\omega_{i_{l}}$ defined in \S \ \ref{subsec_Fock} can be identified
with the semistandard tableaux of shape $\lambda$ (see \cite{kash} and the
example below). The highest weight tableau is the tableau $T(\lambda)$ of
shape $\lambda$ with entries $i$ in row $i$ for any $i=1,\ldots,n$.\ The
elements of the orbit $O_{\mathcal{S}}(\lambda)$ of $T(\lambda)$ are the
semistandard tableaux $T=C_{1}\otimes\cdots\otimes C_{l}$ of shape $\lambda$
verifying the chain of inclusions $C_{l}\subset\cdots\subset C_{2}\subset
C_{1}$.\ Also for two such tableaux $T=C_{1}\otimes\cdots\otimes C_{l}$ and
$T^{\prime}=C_{1}^{\prime}\otimes\cdots\otimes C_{l}^{\prime}$ with $T=w\cdot
T(\lambda)$ and $T=w^{\prime}\cdot T(\lambda)$ and $(w,w^{\prime}%
)\in(W^{\lambda})^{2}$, we have $w\trianglelefteq w^{\prime}$ if and only if
$C_{k}C_{k}^{\prime}$ is a semistandard tableau for any $k=1,\ldots,l.$ This
is a direct consequence of \ref{Cor_inclus_columns}.\ Equivalently one gets
that the Strong Bruhat order on $W^{\lambda}$ is just the product of the
strong Bruhat orders on the cosets $W^{\omega_{i}},i=1,\ldots,l$.

\begin{example}
Let us compute the Key of the tableau%
\[
T=%
\begin{tabular}
[c]{|l|l|l}\hline
$1$ & $2$ & \multicolumn{1}{|l|}{$2$}\\\hline
$3$ & $4$ & \multicolumn{1}{|l|}{$4$}\\\hline
$4$ & $5$ & \\\cline{1-2}%
\end{tabular}
\
\]
corresponding to $v=(3,3,2)$.\ By using the Jeu de Taquin procedure, we get
for the associated generalized tableaux of shape $(3,2,3)$ and $(2,3,3)$%
\[%
\begin{tabular}
[c]{|l|l|l|}\hline
$1$ & $2$ & $2$\\\hline
$3$ & $4$ & $4$\\\hline
$4$ &  & $5$\\\cline{1-1}\cline{3-3}%
\end{tabular}
\ \text{ and }%
\begin{tabular}
[c]{l|l|l|}\hline
\multicolumn{1}{|l|}{$1$} & $2$ & $2$\\\hline
\multicolumn{1}{|l|}{$4$} & $3$ & $4$\\\hline
& $4$ & $5$\\\cline{2-3}%
\end{tabular}
\
\]
which gives
\[
T^{L}=%
\begin{tabular}
[c]{|l|l|l}\hline
$1$ & $1$ & \multicolumn{1}{|l|}{$1$}\\\hline
$3$ & $3$ & \multicolumn{1}{|l|}{$4$}\\\hline
$4$ & $4$ & \\\cline{1-2}%
\end{tabular}
\text{ and }T^{R}=%
\begin{tabular}
[c]{|l|l|l}\hline
$2$ & $2$ & \multicolumn{1}{|l|}{$2$}\\\hline
$4$ & $4$ & \multicolumn{1}{|l|}{$4$}\\\hline
$5$ & $5$ & \\\cline{1-2}%
\end{tabular}
\ .
\]

\end{example}

\subsection{Other finite types}

\subsubsection{Classical types}

Thanks to Corollary \ref{Cor_fund} the computation of the Keys in types
$B_{n},C_{n}$ and $D_{n}$ becomes very closed to that in type $A_{n}$. We
shall describe it for type $C_{n}$ and let to the reader its adaptation to
types $B_{n}$ and $D_{n}$. For a review on the combinatorics of crystals in
classical types we refer to \cite{LecSurv}.\ There exists a convenient notion
of symplectic tableaux compatible with crystal basis theory \cite{KN}.\ In
type $C_{n}$, the dominant weights can be identified with partitions exactly
as in type $A_{n}$.\ 

A tableau $T=C_{1}\cdots C_{l}$ of type $C_{n}$ and shape a partition
$\lambda$ is a filling of the Young diagram $\lambda$ by letters of
$\{1<\cdots<n<\overline{n}<\cdots<\overline{1}\}$ such that each column
$C_{l}$ is admissible and the split form of $T$ is semistandard.\ A column $C$
is admissible when it can be split in a pair $(lC,rC)$ of columns contained no
pair of letters $(z,\overline{z})$ with $z\in\{1,\ldots,n\}$ by using the
following procedure. Let $I=\{z_{1}>\cdot\cdot\cdot>z_{r}\}$ the set of
unbarred letters $z$ such that the pair $(z,\overline{z})$ occurs in $C$. The
column $C$ can be split when there exists (see the example below) a set
$J=\{t_{1}>\cdot\cdot\cdot>t_{r}\}\subset\{1,\ldots n\}$ such that:

\begin{itemize}
\item $t_{1}$ is the greatest letter of $\{1,\ldots n\}$ satisfying:
$t_{1}<z_{1},t_{1}\notin C$ and $\overline{t_{1}}\notin C,$

\item for $i=2,...,r$, $t_{i}$ is the greatest letter of $\{1,\ldots n\}$
satisfying: $t_{i}<\min(t_{i-1,}z_{i}),$ $t_{i}\notin C$ and $\overline{t_{i}%
}\notin C.$
\end{itemize}

\noindent In this case write:

\noindent$\mathrm{r}C$ for the column obtained by changing in $C,$
$\overline{z}_{i}$ into $\overline{t}_{i}$ for each letter $z_{i}\in I$ and by
reordering if necessary,

\noindent$\mathrm{l}C$ for the column obtained by changing in $C,$ $z_{i}$
into $t_{i}$ for each letter $z_{i}\in I$ and by reordering if necessary.

\noindent Admissible columns with $i$ boxes label the vertices of
$B(\omega_{i})$. Moreover for any $C$ in $B(\omega_{i})$, we have
$C^{L}=\mathrm{l}C$ and $C^{R}=\mathrm{r}C$, that is the previous procedure
give the left and right Keys of a column.

\noindent Now $T$ is a tableau of type $C_{n}$ when its split form
$\mathrm{spl}(T)=\mathrm{l}C_{1}\mathrm{r}C_{1}\cdots\mathrm{l}C_{l}%
\mathrm{r}C_{l}$ is semistandard.

As in type $A$, on associates to the sequence $\mathcal{S}=(\omega_{i_{1}%
},\ldots,\omega_{i_{l}})$ of dominant weights such that $i_{1}\geq\cdots\geq
i_{l}$ the crystal $B_{\mathcal{S}}(\lambda)$ with $\lambda=\omega_{i_{1}%
}+\cdots+\omega_{i_{l}}$.\ Its vertices then coincide with the tableaux of
type $C_{n}$ and shape $\lambda$. The $\mathrm{R}$-matrix $B_{(\omega
_{i},\omega_{j})}(\omega_{i}+\omega_{j})\rightarrow B_{(\omega_{j},\omega
_{i})}(\omega_{i}+\omega_{j})$ can be computed by using Sheats symplectic Jeu
de Taquin (which does not coincide with the restriction of the usual Jeu de
Taquin on symplectic tableaux) or the bumping procedure on symplectic
tableaux. The vertices in $O_{\mathcal{S}}(\lambda)$ are the tableaux of type
$C_{n}$ of the form $T=C_{1}\cdots C_{l}$ where $C_{l}\subset\cdots\subset
C_{1}$ and no pair of letters $(z,\overline{z})$ in each column $C_{k}$. As in
type $A$, for $T=w\cdot T(\lambda)$ and $T=w^{\prime}\cdot T(\lambda)$ and
$(w,w^{\prime})\in(W^{\lambda})^{2}$, we have $w\trianglelefteq w^{\prime}$ if
and only if $C_{k}C_{k}^{\prime}$ is a semistandard tableau for any
$k=1,\ldots,l.$

\begin{example}
Let us assume $n=4$ and compute the right Key of the tableau%
\[
T=%
\begin{tabular}
[c]{|l|l}\hline
$\mathtt{1}$ & \multicolumn{1}{|l|}{$\mathtt{2}$}\\\hline
$\mathtt{2}$ & \multicolumn{1}{|l|}{$\mathtt{4}$}\\\hline
$\mathtt{4}$ & \multicolumn{1}{|l|}{$\mathtt{\bar{4}}$}\\\hline
$\mathtt{\bar{4}}$ & \\\cline{1-1}%
\end{tabular}
\ \text{ with }\mathrm{spl}(T)=%
\begin{tabular}
[c]{|l|l|ll}\hline
$\mathtt{1}$ & $\mathtt{1}$ & $\mathtt{2}$ & \multicolumn{1}{|l|}{$\mathtt{2}%
$}\\\hline
$\mathtt{2}$ & $\mathtt{2}$ & $\mathtt{3}$ & \multicolumn{1}{|l|}{$\mathtt{4}%
$}\\\hline
$\mathtt{3}$ & $\mathtt{4}$ & $\mathtt{\bar{4}}$ &
\multicolumn{1}{|l|}{$\mathtt{\bar{3}}$}\\\hline
$\mathtt{\bar{4}}$ & $\mathtt{\bar{3}}$ &  & \\\cline{1-2}%
\end{tabular}
\
\]
We have
\[
R_{(\omega_{4},\omega_{3})}(T)=%
\begin{tabular}
[c]{l|l|}\hline
\multicolumn{1}{|l|}{$\mathtt{1}$} & $\mathtt{2}$\\\hline
\multicolumn{1}{|l|}{$\mathtt{2}$} & $\mathtt{4}$\\\hline
\multicolumn{1}{|l|}{$\mathtt{3}$} & $\mathtt{\bar{4}}$\\\hline
& $\mathtt{\bar{3}}$\\\cline{2-2}%
\end{tabular}
\ \text{ with }\mathrm{spl}(R_{(\omega_{4},\omega_{3})}(T))=%
\begin{tabular}
[c]{ll|l|l|}\hline
\multicolumn{1}{|l}{$\mathtt{1}$} & \multicolumn{1}{|l|}{$\mathtt{1}$} &
$\mathtt{1}$ & $\mathtt{2}$\\\hline
\multicolumn{1}{|l}{$\mathtt{2}$} & \multicolumn{1}{|l|}{$\mathtt{2}$} &
$\mathtt{2}$ & $\mathtt{4}$\\\hline
\multicolumn{1}{|l}{$\mathtt{3}$} & \multicolumn{1}{|l|}{$\mathtt{3}$} &
$\mathtt{\bar{4}}$ & $\mathtt{\bar{3}}$\\\hline
&  & $\mathtt{\bar{3}}$ & $\mathtt{\bar{1}}$\\\cline{3-4}%
\end{tabular}
\
\]
which gives
\[
T^{L}=%
\begin{tabular}
[c]{|l|l}\hline
$\mathtt{1}$ & \multicolumn{1}{|l|}{$\mathtt{1}$}\\\hline
$\mathtt{2}$ & \multicolumn{1}{|l|}{$\mathtt{2}$}\\\hline
$\mathtt{3}$ & \multicolumn{1}{|l|}{$\mathtt{3}$}\\\hline
$\mathtt{\bar{4}}$ & \\\cline{1-1}%
\end{tabular}
\ \text{ and }T^{R}=%
\begin{tabular}
[c]{|l|l}\hline
$\mathtt{2}$ & \multicolumn{1}{|l|}{$\mathtt{2}$}\\\hline
$\mathtt{4}$ & \multicolumn{1}{|l|}{$\mathtt{4}$}\\\hline
$\mathtt{\bar{3}}$ & \multicolumn{1}{|l|}{$\mathtt{\bar{3}}$}\\\hline
$\mathtt{\bar{1}}$ & \\\cline{1-1}%
\end{tabular}
\
\]

\end{example}

\subsubsection{Exceptional types}

For exceptional types, the Key in fundamental crystals can yet be computed
from the dilatation maps $K_{m}$ defined in (\ref{embdedd}) with $m\leq4$ (see
\cite{Lit2}). There is also relevant notions of tableaux (see \cite{BS} and
the references therein). Nevertheless, the combinatorial \textrm{$R$}-matrices
for fundamental crystals, the orbit of the highest weight vertex in the
crystals and the strong Bruhat order on this orbit become more complex to
compute beyond type $G_{2}$ (for which the model remains simple and there is a
bumping algorithm (see \cite{LecSurv})).

\section{Determination of the Demazure crystals by Keys in affine type A}

\label{Sect_affineA}In this section we assume $\mathfrak{g}=\widehat
{\mathfrak{sl}}_{e}$ is the affine Lie algebra of type $A_{e-1}^{(1)}$.\ A
sequence $\mathbf{s}=(s_{1},\ldots,s_{l})\in\mathbb{Z}^{l}$ is called a
multicharge. It defines the dominant weight $\Lambda_{\mathbf{s}}=\sum
_{i=1}^{l}\omega_{s_{i}\operatorname{mod}e}$ of level $l$ where $\omega
_{0},\ldots,\omega_{e-1}$ are the fundamental weights of $\widehat
{\mathfrak{sl}}_{e}$.

\subsection{The level 1}

We now review a convenient realization of the crystals $B(\omega
_{i}),i=0,\ldots,e-1$ by abaci.\ Recall that a partition is a nonincreasing
sequence $\lambda=(\lambda_{1}\geq\cdots\geq\lambda_{m})$ of nonnegative
integers.\ One can assume this sequence is infinite by adding parts equal to
zero. Each partition will be identified with its Young diagram. To each box
(also called node) $b$ of a partition $\lambda$, one associates its content
$c(b)=v-u$ where $u$ and $v$ are such that $b$ belongs to the $u$-th row and
the $v$-th column of $\lambda$, respectively. A partition is completely
determined by its beta numbers. These are the contents of its extended rim
obtained by adding one box to the right end of each row.\ The removable nodes
of $\lambda$ are the nodes located at the ends of its rows which yet yield a
partition when they are removed from $\lambda$. The addable nodes of $\lambda$
are the nodes in its extended rim which yield a partition when they are added
to $\lambda$.

Now fix $s\in\mathbb{Z}$.\ The symbol of $\lambda,$ denoted by $S_{s}%
(\lambda)$ is the list of its beta numbers translated by $s$.\ Alternatively,
one can consider the abacus $L_{s}(\lambda)$ which is obtained by decorating
$\mathbb{Z}$ with black and white beads such that the black beads corresponds
to the integers in $S_{s}(\lambda)$. Since $\lambda$ is assumed to have an
infinite number of zero parts, both $S_{s}(\lambda)$ and $L_{s}(\lambda)$ are
infinite. Nevertheless, only the nonzero parts of $\lambda$ are relevant which
is easy to make apparent when $S_{s}(\lambda)$ and $L_{s}(\lambda)$ are
pictured (see the following example). An addable node in $\lambda$ corresponds
in $L_{s}(\lambda)$ to a black bead with a white bead at its right whereas a
removable node corresponds to a black bead with a white bead at its left.

The set of symbols can be endowed with the structure of a type $A_{e-1}^{(1)}%
$-crystal.\ For any $i\in\{0,\ldots e-1\}$, the $i$-nodes of $\lambda$ are
those of content $x=i\operatorname{mod}e$. Let $w_{i}$ be the word on the
alphabet $\{A,R\}$ obtained by reading from right to left the entries $x$ of
$S_{s}(\lambda)$ such that $x=i\operatorname{mod}e$ or
$x=i+1\operatorname{mod}e$ corresponding to addable or removable $i$-nodes in
$\lambda$.\ We shall say that $w_{i}$ is the $\{A,R\}$-word of $\lambda
$.\ Delete recursively each factor $RA$ until obtain a reduced word of the
form $\widetilde{w}_{i}=A^{a}R^{r}$.\ Then $\tilde{f}_{i}(S_{s}(\lambda))$ is
obtained by changing in $S_{s}(\lambda)$ the rightmost entry $x$ appearing in
$\widetilde{w}_{i}$ into $x+1$ if $a>0$ and is zero otherwise. Is is easy to
check that $S_{s}(\emptyset)$ is then a source vertex. In fact the connected
component $B_{s}(\emptyset)$ of $S_{s}(\emptyset)$ is isomorphic to the
$\widehat{\mathfrak{sl}}_{e}$-crystal $B(\omega_{s\operatorname{mod}e})$ (see
for example \cite{GJ} Chapter 6 and the references therein). Also one can
prove that the vertices in $B_{s}(\emptyset)$ are the symbols $S_{s}(\lambda)$
corresponding to $e$-regular partitions, that is to partitions with no part
repeated strictly more than $e-1$ times. Alternatively, $\lambda$ is
$e$-regular if there is no sequence of $e$ black beads in $L_{s}(\lambda)$.

Observe that when $e$ tends to infinity, the previous construction yields the
crystal $B_{s}^{\infty}(S_{s}(\emptyset))$ which is isomorphic to the
$\mathfrak{sl}_{\infty}$-crystal with highest weight the $s$-th fundamental
weight. Also, up to rotation, the symbol $S_{s}(\lambda)$ is nothing but the
half-infinite column semistandard tableau on $\mathbb{Z}$ which is the natural
type $A_{\infty}$-extension of the finite columns used in
\S \ref{Subsec_FinitetypeA}.

\begin{example}
Consider the $3$-regular partition $\lambda=(5,3,3,2)$.\ Its beta numbers are
easily deduced from its Young diagram%
\[
\lambda=%
\begin{tabular}
[c]{p{0.5cm}p{0.5cm}p{0.5cm}p{0.5cm}p{0.5cm}p{0.5cm}p{0.5cm}}\cline{1-5}%
\multicolumn{1}{|p{0.5cm}}{} & \multicolumn{1}{|p{0.5cm}}{} &
\multicolumn{1}{|p{0.5cm}}{} & \multicolumn{1}{|p{0.5cm}}{} &
\multicolumn{1}{|p{0.5cm}}{} & \multicolumn{1}{|p{0.5cm}}{$5$} &
\\\cline{1-3}\cline{1-5}%
\multicolumn{1}{|p{0.5cm}}{} & \multicolumn{1}{|p{0.5cm}}{} &
\multicolumn{1}{|p{0.5cm}}{} & \multicolumn{1}{|p{0.5cm}}{$2$} &  &  &
\\\cline{1-3}%
\multicolumn{1}{|p{0.5cm}}{} & \multicolumn{1}{|p{0.5cm}}{} &
\multicolumn{1}{|p{0.5cm}}{} & \multicolumn{1}{|p{0.5cm}}{$1$} &  &  &
\\\cline{1-2}\cline{1-3}%
\multicolumn{1}{|p{0.5cm}}{} & \multicolumn{1}{|p{0.5cm}}{} &
\multicolumn{1}{|p{0.5cm}}{$-1$} &  &  &  & \\\cline{1-2}%
$-4$ &  &  &  &  &  & \\
$-5$ &  &  &  &  &  &
\end{tabular}
\
\]
and we get%
\[
S_{0}(\lambda)=%
\begin{tabular}
[c]{p{0.5cm}p{0.5cm}p{0.5cm}p{0.5cm}p{0.5cm}p{0.5cm}p{0.5cm}}%
$\cdots$ & $-5$ & $-4$ & $-1$ & $1$ & $2$ & $5$%
\end{tabular}
\]
The abacus $L_{0}(\lambda)$ is :%

\[
\begin{tikzpicture}[scale=0.5, bb/.style={draw,circle,fill,minimum size=2.5mm,inner sep=0pt,outer sep=0pt}, wb/.style={draw,circle,fill=white,minimum size=2.5mm,inner sep=0pt,outer sep=0pt}]
\node [wb] at (11,2) {};
\node [wb] at (10,2) {};
\node [wb] at (9,2) {};
\node [wb] at (8,2) {};
\node [wb] at (7,2) {};
\node [wb] at (6,2) {};
\node [bb] at (5,2) {};
\node [wb] at (4,2) {};
\node [wb] at (3,2) {};
\node [bb] at (2,2) {};
\node [bb] at (1,2) {};
\node [wb] at (0,2) {};
\node [bb] at (-1,2) {};
\node [wb] at (-2,2) {};
\node [wb] at (-3,2) {};
\node [bb] at (-4,2) {};
\node [bb] at (-5,2) {};
\node [bb] at (-6,2) {};
\node [bb] at (-7,2) {};
\node [bb] at (-8,2) {};
\node [bb] at (-9,2) {};
\draw[dashed](0.5,1.5)--node[]{}(0.5,2.5);
\end{tikzpicture}
\]



\end{example}

Recall that the hook length $h(b)$ of a node $b$ in the partition $\lambda$
(i.e. a box in its Young diagram) is the number of nodes located to the right
or below $b$ (weakly speaking, thus $b$ contributes to $h(b)$). A partition
$\lambda$ is called a $e$-core if it does not contain a node with hook length
$e$. There are alternative characterizations of the $e$-core (see for example
\cite{Las2}) of the partition $\lambda$.

\begin{proposition}
\label{Prop_CharCore}The following assertions are equivalent:

\begin{enumerate}
\item $\lambda$ is a $e$-core,

\item $\lambda$ does not contains any node with hook length $e$,

\item for any $i=0,\ldots,e-1,$ $w_{i}$ contains only nodes $A$ or only nodes
$R,$

\item for any $x$ in $S_{e}(\lambda)$, $x-e$ also belongs to $S_{e}(\lambda)$,

\item we have $L_{t}(\lambda)\subset L_{t+e}$ for any $t\in\mathbb{Z}$.
\end{enumerate}
\end{proposition}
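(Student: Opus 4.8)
The plan is to prove everything by passing to the symbol/abacus picture set up above, so that hooks, $e$-cores and the crystal data all become statements about the beads on $\mathbb{Z}$. We will use the standard dictionary (for any charge $s$): the hook lengths of $\lambda$ are exactly the positive differences $x-y$ with $x\in S_{s}(\lambda)$ a black bead and $y\notin S_{s}(\lambda)$ a white bead, and removing the corresponding node amounts to sliding the bead from $x$ to $y$. The equivalence (1)$\Leftrightarrow$(2) is then a reformulation: if a black bead sits $ke$ positions above a white bead ($k\geq 1$), then running through $x,x-e,\ldots,x-ke$ one finds consecutive positions with a black bead exactly $e$ above a white bead; hence $\lambda$ has a node of hook length $e$ iff it has one of hook length a multiple of $e$, i.e.\ $e\nmid\lambda$ in the usual sense.

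\textbf{The equivalences (2)$\Leftrightarrow$(4)$\Leftrightarrow$(5).} By the dictionary above, $\lambda$ has a node of hook length $e$ iff there is $x\in S_{s}(\lambda)$ with $x-e\notin S_{s}(\lambda)$. The property ``$x\in S\Rightarrow x-e\in S$'' is unchanged if $S$ is translated by an integer, so it holds for one charge iff for all charges; taking $s=e$ this is precisely the negation of (4). Since $S_{t+e}(\lambda)=S_{t}(\lambda)+e$, the inclusion $L_{t}(\lambda)\subseteq L_{t+e}(\lambda)$ of (5) reads $S_{t}(\lambda)\subseteq S_{t}(\lambda)+e$, i.e.\ $x\in S_{t}(\lambda)\Rightarrow x-e\in S_{t}(\lambda)$, the same charge‑independent statement. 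Equivalently, all of this says that each runner of the $e$-abacus is ``flush''. So (2), (4) and (5) are three phrasings of one thing, and the work is entirely in relating these to (3).

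\textbf{The equivalence (2)$\Leftrightarrow$(3).} Here I would use the combinatorial description of $\tilde f_i$ on symbols recalled in the text: $w_{i}$ records an $A$ for each addable $i$-node (a black bead $\equiv i$ with a white bead to its right) and an $R$ for each removable $i$-node (a black bead $\equiv i+1$ with a white bead to its left), so $w_{i}$ fails to be homogeneous exactly when $\lambda$ has, for the \emph{same} $i$, an addable and a removable $i$-node. In that case a short check (comparing the positions $a$ of the addable black bead and $b$ of the removable one, according as $a\geq b$ or $a<b$) produces a black bead lying strictly above a white bead in a common residue class mod $e$, hence a node of hook length $e$; this gives (2)$\Rightarrow$(3). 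For the converse, from a node of hook length $e$ one obtains a non-flush runner, and one must walk carefully along that runner — using that the abacus is all-black far to the left and all-white far to the right — to locate a residue $i$ admitting simultaneously an addable and a removable $i$-node.

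\textbf{Main obstacle.} The delicate point is precisely the last step: the addable/removable corners seen by the $\{A,R\}$-words live at boundaries between black and white \emph{runs} on the full line, whereas the defect detected by (4) lives \emph{within} a single runner, and the two are related only after matching up residue classes mod $e$ (with the cyclic shift when $i=e-1$). I would handle this either by the alternation of addable and removable corners along the rim of $\lambda$ (their contents strictly interlace), tracking residues through that interlacing, or by the bead bookkeeping sketched above; everything else is routine once this residue matching is in place, and the cycle of equivalences then closes via the reductions in the first two paragraphs.
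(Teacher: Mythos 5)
The paper itself offers no proof of this proposition: it is quoted as a known fact with a pointer to \cite{Las2}, and assertions (1) and (2) are verbatim identical given the definition of $e$-core stated just above it, so there is nothing in the text to compare your argument with. Your handling of (1), (2), (4) and (5) is correct: via the classical bead--hook dictionary all four reduce to the single, translation-invariant condition ``$x$ black $\Rightarrow x-e$ black''. Your argument for $(2)\Rightarrow(3)$ is also sound: if some residue $i$ carries both an addable node (black bead at $a\equiv i$, white at $a+1$) and a removable node (black bead at $b\equiv i+1$, white at $b-1$), then either the runner through $a$ and $b-1$ or the runner through $a+1$ and $b$ has a black bead strictly above a white one, and walking down that runner in steps of $e$ produces a hook of length $e$. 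This is in substance the level-one case of Lemma \ref{rem}, which is the only direction the paper ever uses.

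The step you flag as the ``main obstacle'', namely $(3)\Rightarrow(2)$, is not merely delicate: under the literal reading of $w_i$ given in the text it is false, so no amount of careful walking along the runners will close it. Take $e=2$ and $\lambda=(4,1)$. Its addable nodes are $(1,5)$, $(2,2)$, $(3,1)$, of contents $4,0,-2$, all of residue $0$, and its removable nodes are $(1,4)$, $(2,1)$, of contents $3,-1$, both of residue $1$; hence $w_0=AAA$ and $w_1=RR$ are homogeneous and (3) holds, yet the box $(1,3)$ has hook length $2$, so $\lambda$ is not a $2$-core (its $2$-core is $(2,1)$). For $e=3$ the $3$-regular partition $(3,3,1)$ exhibits the same phenomenon (addable residues $\{0,2\}$, removable residues $\{1\}$, but a hook of length $3$ at $(1,2)$). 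This is exactly the failure the paper itself records in its footnote that ``$\varepsilon_i=0$ or $\varphi_i=0$ for all $i$'' does not characterize the orbit: homogeneity of all the $\{A,R\}$-words is necessary but not sufficient for lying in the $W$-orbit of $\emptyset$. So your proof cannot be completed as written; the cycle of equivalences must run through (1), (2), (4), (5) only, with (3) retained as a consequence of them, unless (3) is reinterpreted as the global orbit condition of Corollary \ref{Cor_core}, whose proof genuinely requires the recursive argument used for Proposition \ref{Th_Orbit_affine} rather than a local signature check.
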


Given two partitions $\lambda$ and $\mu$, we write $\lambda\subseteq\mu$ when
the Young diagram of $\lambda$ is contained in that of $\mu$. This defines the
inclusion order on partitions.

\begin{corollary}
\label{Cor_core}The orbit $O_{s}(\emptyset)$ of $\emptyset$ in $B_{s}%
(\emptyset)$ under the action of the Weyl group $W$ contains exactly the
$e$-cores. Moreover, under this correspondence, the strong Bruhat order on
$W^{\omega_{s}}$ coincides with the inclusion order on partitions.
\end{corollary}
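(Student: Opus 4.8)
The plan is to establish the two claims of Corollary~\ref{Cor_core} separately, both as applications of Proposition~\ref{Prop_CharCore} together with the dictionary between the orbit $O_{s}(\emptyset)$ and $W^{\omega_{s}}$ developed in \S\ref{subsub_reduc_strongBO}. Recall from that subsection that a vertex $b_{w\omega_{s}}\in O_{s}(\emptyset)$ is characterized, among all vertices, by the property that for each $i$ one has $\varepsilon_{i}(b_{w\omega_{s}})=0$ or $\varphi_{i}(b_{w\omega_{s}})=0$ \emph{and} moreover it is reachable from $\emptyset$ by a directed path applying each $\tilde f_{i}$ (or $\tilde e_{i}$) a full $\varphi_{i}$-many (resp. $\varepsilon_{i}$-many) times. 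In the symbol/abacus realization recalled above, the quantity $\varphi_{i}(S_{s}(\lambda))$ equals the number of $A$'s in the reduced word $\widetilde w_{i}=A^{a}R^{r}$ and $\varepsilon_{i}$ equals the number of $R$'s, so the ``$\varepsilon_{i}=0$ or $\varphi_{i}=0$'' condition says exactly that the reduced word $\widetilde w_i$ is either a pure block of $A$'s or a pure block of $R$'s. Since the reduction $w_i\rightsquigarrow\widetilde w_i$ only deletes $RA$ factors, $w_i$ itself being all $A$'s or all $R$'s is \emph{equivalent} to $\widetilde w_i$ being so; hence condition~(3) of Proposition~\ref{Prop_CharCore} is precisely the statement that $S_{s}(\lambda)$ satisfies the local ``$\varepsilon_i=0$ or $\varphi_i=0$'' condition for every $i$.

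For the first assertion I would argue as follows. If $\lambda$ is an $e$-core, then by Proposition~\ref{Prop_CharCore}(3) every $w_i$ is a pure $A$-block or pure $R$-block; applying $\tilde f_i^{\varphi_i}$ (which in the pure-$A$ case slides the whole block of $i$-addable nodes into the partition) or $\tilde e_i^{\varepsilon_i}$ visibly keeps one inside the set of $e$-cores and inside $B_s(\emptyset)$, since the defining abacus condition (5) of Proposition~\ref{Prop_CharCore} is preserved by a full $i$-shift. Running this from $\emptyset$ realizes $S_s(\lambda)$ as $s_{i_\ell}\cdots s_{i_1}\cdot\emptyset$ for a suitable word, i.e.\ as an element of $O_s(\emptyset)$; formally one inducts on $|\lambda|$, removing at each step a full $i$-rim (an ``$i$-ribbon'' strip of removable $i$-nodes) to land on a smaller $e$-core. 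Conversely, if $b_{w\omega_s}\in O_s(\emptyset)$ then by the characterization of orbit vertices it satisfies ``$\varepsilon_i=0$ or $\varphi_i=0$'' for all $i$, which as noted above is exactly Proposition~\ref{Prop_CharCore}(3), so $\lambda$ is an $e$-core. This gives the set-theoretic equality $O_s(\emptyset)=\{e\text{-cores}\}$.

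For the order statement, by \S\ref{subsub_reduc_strongBO} the bijection $W^{\omega_s}\to O_s(\emptyset)$, $w\mapsto b_{w\omega_s}$, identifies $\trianglelefteq$ on $W^{\omega_s}$ with the covering relation generated by the arrows $b_{w\omega_s}\dashrightarrow b_{s_i w\omega_s}$ with $\ell(s_iw)=\ell(w)+1$; by \eqref{i-chain_orbit} such an arrow corresponds to applying $\tilde f_i^{\varphi_i}$, i.e.\ (in the core picture) to \emph{adding} a full $i$-rim strip to the $e$-core $\lambda$, which strictly enlarges the Young diagram. Hence $w\trianglelefteq w'$ forces $\lambda\subseteq\mu$ for the corresponding cores. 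For the reverse implication I would show that if $\lambda\subsetneq\mu$ are $e$-cores then there is an $e$-core $\nu$ with $\lambda\subseteq\nu\subsetneq\mu$ obtained from $\mu$ by removing a single full $i$-rim (equivalently, by the beta-number/abacus description, moving one bead past a maximal run and using condition~(4) of Proposition~\ref{Prop_CharCore} to see the result is still an $e$-core and still contains $\lambda$); iterating produces a saturated chain of $e$-cores from $\lambda$ up to $\mu$ each step of which is a Bruhat cover, whence $w\trianglelefteq w'$.

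The main obstacle I anticipate is the combinatorial heart of the reverse order implication: proving that between two nested $e$-cores there is always an intermediate $e$-core differing by one removable $i$-rim, and that this ``remove a rim'' step is exactly a length-decreasing left multiplication by some $s_i$ on the coset side. This requires carefully matching (i) the $\tilde e_i/\tilde f_i$-action on symbols, (ii) the bead moves on the abacus $L_s(\lambda)$, and (iii) the addition/removal of ribbon strips on Young diagrams, using the equivalence of the characterizations in Proposition~\ref{Prop_CharCore} to stay within the class of $e$-cores at every intermediate step. Everything else is bookkeeping with the dictionary already set up in \S\ref{subsub_reduc_strongBO} and \eqref{i-chain_orbit}.
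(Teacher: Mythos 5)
Both halves of your argument contain a genuine gap. For the first assertion, your converse direction (orbit $\Rightarrow$ $e$-core) rests on the claimed equivalence between condition (3) of Proposition \ref{Prop_CharCore} (each $w_i$ is a pure block of $A$'s or of $R$'s) and the condition ``$\varepsilon_i=0$ or $\varphi_i=0$ for all $i$'' (each $\widetilde w_i$ is pure). Only one implication holds: if $w_i$ is pure then $\widetilde w_i=w_i$ is pure, but not conversely, since for instance $w_i=RA$ reduces to the empty word, giving $\varepsilon_i=\varphi_i=0$ while the partition has both an addable and a removable $i$-node. Concretely, for $e=2$ and $s=0$ the partition $(2)$ has $w_1=RA$ and $w_0=A$, so it satisfies ``$\varepsilon_i=0$ or $\varphi_i=0$'' for every $i$, yet it is not a $2$-core. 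The paper explicitly warns of this in the footnote following (\ref{i-chain_orbit}): this local condition does \emph{not} characterize the orbit. A correct argument (this is how the paper proves the higher-level analogue, Proposition \ref{Th_Orbit_affine}) is an induction on $|\lambda|$ going down the orbit: one uses that an orbit element of positive length is $\tilde f_j^{\varphi_j}$ applied to a shorter orbit element, together with the fact (Lemma \ref{rem} in higher level) that adding all addable $j$-nodes to an $e$-core again yields an $e$-core.

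For the order statement the gap is more serious: you identify $\trianglelefteq$ on $W^{\omega_s}$ with the order generated by the arrows $b_{w\omega_s}\dashrightarrow b_{s_iw\omega_s}$ with $\ell(s_iw)=\ell(w)+1$. The transitive closure of these arrows is the left \emph{weak} order $\leq$, which the paper notes is strictly weaker than $\trianglelefteq$ in general, and it genuinely is so on affine Grassmannian cosets. Hence your forward implication only covers weak-order relations, and your reverse implication, if it worked, would prove that containment of $e$-cores implies a weak-order relation, which is false. Indeed your proposed induction step fails: take $e=3$ and $\lambda=(4,2)\subsetneq\mu=(4,2,1,1)$, both $3$-cores; every removable node of $\mu$ has residue $0$, and removing them all yields $(3,1,1)\not\supseteq(4,2)$, so no intermediate core of the kind you describe exists (strong covers on cores come from non-simple reflections and cannot be reduced to the crystal arrows). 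The paper's route, Proposition \ref{Prop_SBO_lpartitions} specialized to $l=1$, is different: truncate to partitions of bounded rank, realize each affine simple reflection as a product of commuting transpositions in a large finite symmetric group acting on the symbol, and invoke the finite type $A$ description of the strong Bruhat order on Grassmannian cosets (Lemma \ref{Lem_SBOCosets} and Corollary \ref{Cor_inclus_columns}), which translates into entrywise comparison of beta-numbers and hence into containment of Young diagrams.
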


In the following paragraph, we will see how generalize these two last results
in highest level. Now let us recall a combinatorial procedure described in
\cite{A} yielding the right Key $K_{s}^{R}(\lambda)$ of $S_{s}(\lambda)$ in
$B_{s}(\emptyset)$.\footnote{There is a similar procedure for computing the
left key $K_{s}^{L}(\lambda)$ also described in \cite{A}. Thus our forecoming
results can also be used to compute the left Key in arbitrary level.} First
set $U(S(\lambda))=\{x\in S(\lambda)\mid x-e\notin S(\lambda)\}$. Then
$K_{s}^{R}(\lambda)$ can be computed by the following algorithm:

\begin{enumerate}
\item If $U(S(\lambda))=\emptyset$, then $K_{s}^{R}(\lambda)=S_{s}(\lambda)$

\item Else let $p=\max\{x\in S(\lambda)\mid x-e\notin S(\lambda)\}$ and
$q=\min\{x>p\mid x\notin S(\lambda),x-e\in S(\lambda),x\neq
p\operatorname{mod}e\}$.\ Replace $S(\lambda)$ by $S(\lambda)\backslash
\{p\}\cup\{q\}$ and return to step 1.
\end{enumerate}

Observe the algorithm is well-defined for the set $\{x>p\mid x\notin
S(\lambda),x-e\in S(\lambda),x\neq p\operatorname{mod}e\}$ is not empty. Also
it terminates since the cardinality of $U(S(\lambda))$ decreases after
sufficiently iterations.

\begin{example}
Assume $e=3$ and
\[
S_{0}(\lambda)=%
\begin{tabular}
[c]{p{0.5cm}p{0.5cm}p{0.5cm}p{0.5cm}p{0.5cm}p{0.5cm}p{0.5cm}}%
$\cdots$ & $-5$ & $-4$ & $-1$ & $1$ & $2$ & $5$%
\end{tabular}
\ \
\]
Then we get $p=1$ and $q=8$.\ Only one iteration is needed and this gives%
\[
K_{s}^{R}(\lambda)=%
\begin{tabular}
[c]{p{0.5cm}p{0.5cm}p{0.5cm}p{0.5cm}p{0.5cm}p{0.5cm}p{0.5cm}}%
$\cdots$ & $-5$ & $-4$ & $-1$ & $2$ & $5$ & $8$%
\end{tabular}
\ \ \ \
\]
which is the symbol of the $3$-core $\mu=(8,6,4,2)$.
\end{example}

\subsection{Higher level}

\subsubsection{Uglov realization}

Let $\mathbf{s}=(s_{1},\ldots,s_{l})\in\mathbb{Z}^{l}$ be an arbitrary
multicharge. We now recall Uglov's realization of the crystal $B(\Lambda
_{\mathbf{s}})$. Its is quite similar to the level 1 case except one has to
consider $l$-partitions $\boldsymbol{\lambda}=(\lambda^{1},\ldots,\lambda
^{l})$ (i.e. sequences of partitions of length $l$) instead of partitions. To
the $l$-partition $\boldsymbol{\lambda}$ is associated its symbol which is the
sequence $S_{\boldsymbol{s}}(\boldsymbol{\lambda})=(S_{s_{l}}(\lambda
^{l}),\ldots,S_{s_{1}}(\lambda^{1}))$ of the the symbols associated to each
pair $(s_{k},\lambda^{k})$. The abacus $L_{\boldsymbol{s}}(\boldsymbol{\lambda
})=(L_{s_{l}}(\lambda^{l}),\ldots,L_{s_{1}}(\lambda^{1}))$ is defined similarly.

\begin{example}
The abacus of the $3$-partition $(1.1,2.2.2.1,9)$ with $\mathbf{s}=(4,6,1)$ is
\end{example}

\begin{center}
\begin{tikzpicture}[scale=0.5, bb/.style={draw,circle,fill,minimum size=2.5mm,inner sep=0pt,outer sep=0pt}, wb/.style={draw,circle,fill=white,minimum size=2.5mm,inner sep=0pt,outer sep=0pt}]

	\node [wb] at (11,2) {};
	\node [bb] at (10,2) {};
	\node [wb] at (9,2) {};
	\node [wb] at (8,2) {};
	\node [wb] at (7,2) {};
	\node [wb] at (6,2) {};
	\node [wb] at (5,2) {};
	\node [wb] at (4,2) {};
	\node [wb] at (3,2) {};
	\node [wb] at (2,2) {};
	\node [wb] at (1,2) {};
	\node [bb] at (0,2) {};
	\node [bb] at (-1,2) {};
	\node [bb] at (-2,2) {};
	\node [bb] at (-3,2) {};
	\node [bb] at (-4,2) {};
	\node [bb] at (-5,2) {};
	\node [bb] at (-6,2) {};
	\node [bb] at (-7,2) {};
	\node [bb] at (-8,2) {};
	\node [bb] at (-9,2) {};
	
	\node [wb] at (11,1) {};
	\node [wb] at (10,1) {};
	\node [bb] at (9,1) {};
	\node [bb] at (8,1) {};
	\node [bb] at (7,1) {};
	\node [wb] at (6,1) {};
	\node [wb] at (5,1) {};
	\node [bb] at (4,1) {};
	\node [wb] at (3,1) {};
	\node [bb] at (2,1) {};
	\node [bb] at (1,1) {};
	\node [bb] at (0,1) {};
	\node [bb] at (-1,1) {};
	\node [bb] at (-2,1) {};
	\node [bb] at (-3,1) {};
	\node [bb] at (-4,1) {};
	\node [bb] at (-5,1) {};
	\node [bb] at (-6,1) {};
	\node [bb] at (-7,1) {};
	\node [bb] at (-8,1) {};
	\node [bb] at (-9,1) {};
	
	\node [wb] at (11,0) {};
	\node [wb] at (10,0) {};
	\node [wb] at (9,0) {};
	\node [wb] at (8,0) {};
	\node [wb] at (7,0) {};
	\node [bb] at (6,0) {};
	\node [wb] at (5,0) {};
	\node [bb] at (4,0) {};
	\node [wb] at (3,0) {};
	\node [bb] at (2,0) {};
	\node [bb] at (1,0) {};
	\node [bb] at (0,0) {};
	\node [bb] at (-1,0) {};
	\node [bb] at (-2,0) {};
	\node [bb] at (-3,0) {};
	\node [bb] at (-4,0) {};
	\node [bb] at (-5,0) {};
	\node [bb] at (-6,0) {};
	\node [bb] at (-7,0) {};
	\node [bb] at (-8,0) {};
	\node [bb] at (-9,0) {};

	\draw[dashed](0.5,-0.5)--node[]{}(0.5,2.5);
	
	\end{tikzpicture}


\end{center}

The set of symbols so obtained is also endowed with the structures of
$\widehat{\mathfrak{sl}}_{\infty}$ and $\widehat{\mathfrak{sl}}_{e}$-crystals
of level $l$. Nevertheless the $\widehat{\mathfrak{sl}}_{e}$-crystal structure
is not a tensor product of level $1$ crystals when $l>1$. Thus, we cannot
apply directly to the results of Section \ref{Sec_Th_fund}.\ We shall see in
\S \ \ref{SubsecKles} that there is another (closed) construction of level $l$
$\widehat{\mathfrak{sl}}_{e}$-crystals (called the Kleshchev realization)
which is by definition a tensor product of level $1$ affine crystals. We shall
consider both in the sequel notably because Uglov's version is easier to
connect to the combinatorics of non affine type $A$ and the two versions are
of common use in the literature.

\bigskip

First of all, to get the $\widehat{\mathfrak{sl}}_{\infty}$-structure,
consider $j\in\mathbb{Z}$ and $W_{j}$ the word on the alphabet $\{A,R\}$
obtained by reading from right to left and successively in $L_{s_{l}}%
(\lambda^{l}),\ldots,L_{s_{1}}(\lambda^{1})$, the entries $j$ or $j+1$
corresponding to addable or removable nodes.\ Delete recursively each factor
$RA$ in $W_{j}$ until get a reduced word of the form $\widetilde{W}_{j}%
=A^{a}R^{r}$.\ Then $\widetilde{F}_{j}(S_{s}(\lambda))$ is obtained by
changing in $S_{\boldsymbol{s}}(\boldsymbol{\lambda})$ the rightmost $j$
appearing in $\widetilde{W}_{j}$ into $j+1$ if $a>0$ and is zero otherwise. It
is easy to check that $S_{s}(\boldsymbol{\emptyset})$ is then a source vertex
of highest weight $\Lambda_{\mathbf{s}}^{\infty},$ thus its associated
connected component $B^{\infty}(S_{\boldsymbol{s}}(\boldsymbol{\emptyset}))$
is isomorphic to $B(\Lambda_{\mathbf{s}}^{\infty})$.\ 

\noindent Now, to define the $\widehat{\mathfrak{sl}}_{e}$-structure, consider
$i\in\{0,\ldots e-1\}$ and the word%
\begin{equation}
w_{i}=\prod_{p=-\infty}^{+\infty}W_{i+pe}. \label{w_idev}%
\end{equation}
Define $\widetilde{w}_{i}=A^{a}R^{r}$ from $w_{i}$ as previously by recursive
deletion of the factors $RA$. The nodes surviving in $\widetilde{w}_{i}$ are
the normal $i$-nodes. Then $\widetilde{f}_{i}(S_{s}(\lambda))$ is obtained by
changing in $S_{\boldsymbol{s}}(\boldsymbol{\lambda})$ the entry $x$ appearing
in $\widetilde{w}_{i}$ corresponding to the rightmost (normal) node into $x+1$
if $a>0$ and is zero otherwise. The symbol $S_{\boldsymbol{s}}%
(\boldsymbol{\emptyset})$ becomes a source vertex of highest weight
$\Lambda_{\mathbf{s}}$ and the associated connected component
$B(S_{\boldsymbol{s}}(\boldsymbol{\emptyset}))$ is isomorphic to
$B(\Lambda_{\mathbf{s}})$.\ Observe that both crystal structures
$B(S_{\boldsymbol{s}}(\boldsymbol{\emptyset}))$ and $B^{\infty}%
(S_{\boldsymbol{s}}(\boldsymbol{\emptyset}))$ are compatible: we have
$B(S_{\boldsymbol{s}}(\boldsymbol{\emptyset}))\subset B^{\infty}%
(S_{\boldsymbol{s}}(\boldsymbol{\emptyset)})$ (i.e. an inclusion of the sets
of vertices) and each arrow $S_{\boldsymbol{s}}(\boldsymbol{\lambda}%
)\overset{i}{\rightarrow}S_{\boldsymbol{s}}(\boldsymbol{\mu})$ in
$B(S_{\boldsymbol{s}}(\boldsymbol{\emptyset}))$ is an arrow $S_{\boldsymbol{s}%
}(\boldsymbol{\lambda})\overset{j}{\rightarrow}S_{\boldsymbol{s}%
}(\boldsymbol{\mu})$ in $B^{\infty}(S_{\boldsymbol{s}}(\boldsymbol{\emptyset
}))$ with $j=i\operatorname{mod}e$ where $W_{j}$ is the factor of $w_{i}$
modified in (\ref{w_idev}) when $\tilde{f}_{i}$ is applied to
$S_{\boldsymbol{s}}(\boldsymbol{\lambda})$.

\subsection{Orbit of the highest weight vertex}

\label{SubsecOrbHwv}Let $\mathbf{s}=(s_{1},\ldots,s_{l})\in\mathbb{Z}^{l}$ be
an arbitrary multicharge and $e\in\mathbb{Z}^{>0}$. We now give a
characterization of the $l$-partitions in the orbit $O(\mathbf{s},e)$ of
$S_{\boldsymbol{s}}(\boldsymbol{\emptyset})$ modulo the action of the affine
Weyl group similar to Corollary \ref{Cor_core}. To an $l$-partition
${\boldsymbol{\lambda}}$, we attach its abacus (which depends on $\mathbf{s}%
$). Recall that $L_{\boldsymbol{s}}(\boldsymbol{\lambda})=(L_{s_{l}}%
(\lambda^{l}),\ldots,L_{s_{1}}(\lambda^{1}))$ is the abacus of $\lambda$. For
two runners $L_{s}(\lambda)$ and $L_{t}(\mu)$ in one abacus, write
$L_{s}(\lambda)\subset L_{t}(\mu)$ when for each black bead in the runner
$L_{s}(\lambda)$, there is a black bead at the same position in the runner
$L_{t}(\mu)$. Alternatively, let $S_{s}(\lambda)$ and $S_{t}(\mu)$ be the
symbols corresponding to these two runners.\ We have $L_{s}(\lambda)\subset
L_{t}(\mu)$ if and only if $S_{s}(\lambda)$ is contained in $S_{t}(\mu)$. Note
that for all $k\in\mathbb{Z}$, we have $L_{s}(\lambda)\subset L_{t}(\mu)$ if
and only if $L_{s+k}(\lambda)\subset L_{t+k}(\mu)$. By a slight abuse of
notation, we shall say that $x$ is in the abacus $L_{\boldsymbol{s}%
}(\boldsymbol{\lambda})$ if and only if there is a black bead in position $x$
in one of its runner (equivalently, $x$ appears in a row of $S_{s}(\lambda)$).

For any partition $\mu$ and any $s\in\mathbb{Z}$, the abacus $L_{-s}(\mu^{t})$
of the transpose partition $\mu^{t}$ is obtained from $L_{s}(\mu)$ by
switching the black and white beads and performing a mirror image.\ Observe
also that $\lambda$ is an $e$-core if and only if $\lambda^{t}$ is. This is
easy to see on the abacus where it suffices to check that for all black bead
in position $x$ , there is a black bead in position $x-e$.

\begin{example}
\label{Exa_trans}Compare below the abaci of the partitions $(5,3,1)$ with
charge $2$ and $(3.2.2.1.1)$ with charge $-2$.%

\[
\begin{tikzpicture}[scale=0.5, bb/.style={draw,circle,fill,minimum size=2.5mm,inner sep=0pt,outer sep=0pt}, wb/.style={draw,circle,fill=white,minimum size=2.5mm,inner sep=0pt,outer sep=0pt}]

	\node [wb] at (11,2) {};
	\node [wb] at (10,2) {};
	\node [wb] at (9,2) {};
	\node [wb] at (8,2) {};
	\node [bb] at (7,2) {};
	\node [wb] at (6,2) {};
	\node [wb] at (5,2) {};
	\node [bb] at (4,2) {};
	\node [wb] at (3,2) {};
	\node [wb] at (2,2) {};
	\node [bb] at (1,2) {};
	\node [wb] at (0,2) {};
	\node [bb] at (-1,2) {};
	\node [bb] at (-2,2) {};
	\node [bb] at (-3,2) {};
	\node [bb] at (-4,2) {};
	\node [bb] at (-5,2) {};
	\node [bb] at (-6,2) {};
	\node [bb] at (-7,2) {};
	\node [bb] at (-8,2) {};
	\node [bb] at (-9,2) {};

	\draw[dashed](0.5,0.5)--node[]{}(0.5,2.5);
	
	\end{tikzpicture}
\]

\[
\begin{tikzpicture}[scale=0.5, bb/.style={draw,circle,fill,minimum size=2.5mm,inner sep=0pt,outer sep=0pt}, wb/.style={draw,circle,fill=white,minimum size=2.5mm,inner sep=0pt,outer sep=0pt}]

	\node [wb] at (11,2) {};
	\node [wb] at (10,2) {};
	\node [wb] at (9,2) {};
	\node [wb] at (8,2) {};
	\node [wb] at (7,2) {};
	\node [wb] at (6,2) {};
	\node [wb] at (5,2) {};
	\node [wb] at (4,2) {};
	\node [wb] at (3,2) {};
	\node [bb] at (2,2) {};
	\node [wb] at (1,2) {};
	\node [bb] at (0,2) {};
	\node [bb] at (-1,2) {};
	\node [wb] at (-2,2) {};
	\node [bb] at (-3,2) {};
	\node [bb] at (-4,2) {};
	\node [wb] at (-5,2) {};
	\node [bb] at (-6,2) {};
	\node [bb] at (-7,2) {};
	\node [bb] at (-8,2) {};
	\node [bb] at (-9,2) {};

	\draw[dashed](0.5,0.5)--node[]{}(0.5,2.5);
	
	\end{tikzpicture}
\]

\end{example}

Assume $l>1$ and $\mathbf{s}=(s_{1},\ldots,s_{l})$. For any $1\leq a<b<l$, let
$s_{a}^{\prime}$ and $s_{b}^{\prime}$ be integers such that $s_{a}%
=s_{a}^{\prime}+p_{a}e,$ $s_{b}=s_{b}^{\prime}+p_{b}e$ with $(p_{a},p_{b}%
)\in\mathbb{Z}^{2}$ and $0\leq s_{a}^{\prime}-s_{b}^{\prime}<e$. When $l=1$,
we set $s_{1}^{\prime}=s_{1}$ for completeness. Note that $s_{a}^{\prime}$ and
$s_{b}^{\prime}$ are in fact defined modulo translation by the same multiple
of $e$ (one can see that that such a translation does not affect the
definition below ).

\begin{definition}
We say that the $l$-partition ${\boldsymbol{\lambda}}$ is a $(e,\mathbf{s}%
)$-core if it satisfies on of the following properties:

\begin{enumerate}
\item $l=1$ and $L_{s_{1}^{\prime}}(\lambda^{1})\subset L_{s_{1}^{\prime}%
+e}(\lambda^{1})$

\item $l>1$ and $L_{s_{a}^{\prime}}(\lambda^{a})\subset L_{s_{b}^{\prime}%
}(\lambda^{b})\subset L_{s_{a}^{\prime}+e}(\lambda^{a})$ for any $1\leq a<b<l$.
\end{enumerate}

We denote by $\mathfrak{L} (e,\mathbf{s})$ the set of all $(e,\mathbf{s})$-cores.
\end{definition}

\begin{remark}
\ 

\begin{enumerate}
\item The condition $L_{s_{a}^{\prime}}(\lambda^{a})\subset L_{s_{a}^{\prime
}+e}(\lambda^{b})$ means that for each $x$ in $L_{s_{a}^{\prime}}(\lambda
^{a}),$ $x-e$ also belongs to $L_{s_{a}^{\prime}}(\lambda^{a})$ (since $x\in
L_{s_{a}^{\prime}+e}(\lambda^{a})$). Thus, in the $(e,\mathbf{s})$-core
${\boldsymbol{\lambda}}$, each $\lambda^{a}$ is a core.

\item When $l>1$ and $\mathbf{s}=(s_{1},\ldots,s_{l})$ is such that $0\leq
s_{1}\leq\cdots\leq s_{l}<e$, then ${\boldsymbol{\lambda}}$ is a
$(e,\mathbf{s})$-core if and only if for any $1\leq a\leq l$, $\lambda^{a}$ is
a $e$-core and for any $1\leq a<l-1$, $L_{s_{a}}(\lambda^{a})\subset
L_{s_{a+1}}(\lambda^{b})\subset L_{s_{a}+e}(\lambda^{a})$.
\end{enumerate}
\end{remark}

\begin{example}
Assume $e=3$ and $\mathbf{s}=(0,1,1)$, then the $3$-partition $(1,3.1,3.1)$ is
in $\mathfrak{L}(e,\mathbf{s})$.

\[
\begin{tikzpicture}[scale=0.5, bb/.style={draw,circle,fill,minimum size=2.5mm,inner sep=0pt,outer sep=0pt}, wb/.style={draw,circle,fill=white,minimum size=2.5mm,inner sep=0pt,outer sep=0pt}]

			\node [wb] at (11,2) {};
			\node [wb] at (10,2) {};
			\node [wb] at (9,2) {};
			\node [wb] at (8,2) {};
			\node [wb] at (7,2) {};
			\node [wb] at (6,2) {};
			\node [wb] at (5,2) {};
			\node [bb] at (4,2) {};
			\node [wb] at (3,2) {};
			\node [wb] at (2,2) {};
			\node [bb] at (1,2) {};
			\node [wb] at (0,2) {};
			\node [bb] at (-1,2) {};
			\node [bb] at (-2,2) {};
			\node [bb] at (-3,2) {};
			\node [bb] at (-4,2) {};
			\node [bb] at (-5,2) {};
			\node [bb] at (-6,2) {};
			\node [bb] at (-7,2) {};
			\node [bb] at (-8,2) {};
			\node [bb] at (-9,2) {};
			
			\node [wb] at (11,1) {};
			\node [wb] at (10,1) {};
			\node [wb] at (9,1) {};
			\node [wb] at (8,1) {};
			\node [wb] at (7,1) {};
			\node [wb] at (6,1) {};
			\node [wb] at (5,1) {};
			\node [bb] at (4,1) {};
			\node [wb] at (3,1) {};
			\node [wb] at (2,1) {};
			\node [bb] at (1,1) {};
			\node [wb] at (0,1) {};
			\node [bb] at (-1,1) {};
			\node [bb] at (-2,1) {};
			\node [bb] at (-3,1) {};
			\node [bb] at (-4,1) {};
			\node [bb] at (-5,1) {};
			\node [bb] at (-6,1) {};
			\node [bb] at (-7,1) {};
			\node [bb] at (-8,1) {};
			\node [bb] at (-9,1) {};
			
			\node [wb] at (11,0) {};
			\node [wb] at (10,0) {};
			\node [wb] at (9,0) {};
			\node [wb] at (8,0) {};
			\node [wb] at (7,0) {};
			\node [wb] at (6,0) {};
			\node [wb] at (5,0) {};
			\node [wb] at (4,0) {};
			\node [wb] at (3,0) {};
			\node [wb] at (2,0) {};
			\node [bb] at (1,0) {};
			\node [wb] at (0,0) {};
			\node [bb] at (-1,0) {};
			\node [bb] at (-2,0) {};
			\node [bb] at (-3,0) {};
			\node [bb] at (-4,0) {};
			\node [bb] at (-5,0) {};
			\node [bb] at (-6,0) {};
			\node [bb] at (-7,0) {};
			\node [bb] at (-8,0) {};
			\node [bb] at (-9,0) {};

			\draw[dashed](0.5,-0.5)--node[]{}(0.5,2.5);
			
			\end{tikzpicture}
\]


\end{example}

\begin{example}
\label{exaff} Take $e=3$ and $\mathbf{s}=(0,1)$. The following are the
$2$-partitions of rank less than $3$ in $\mathfrak{L}(a,\mathbf{s})$:

The empty bipartition $(\emptyset,\emptyset)$, with abacus:%

\[
\begin{tikzpicture}[scale=0.5, bb/.style={draw,circle,fill,minimum size=2.5mm,inner sep=0pt,outer sep=0pt}, wb/.style={draw,circle,fill=white,minimum size=2.5mm,inner sep=0pt,outer sep=0pt}]

			\node [wb] at (11,2) {};
			\node [wb] at (10,2) {};
			\node [wb] at (9,2) {};
			\node [wb] at (8,2) {};
			\node [wb] at (7,2) {};
			\node [wb] at (6,2) {};
			\node [wb] at (5,2) {};
			\node [wb] at (4,2) {};
			\node [wb] at (3,2) {};
			\node [wb] at (2,2) {};
			\node [bb] at (1,2) {};
			\node [bb] at (0,2) {};
			\node [bb] at (-1,2) {};
			\node [bb] at (-2,2) {};
			\node [bb] at (-3,2) {};
			\node [bb] at (-4,2) {};
			\node [bb] at (-5,2) {};
			\node [bb] at (-6,2) {};
			\node [bb] at (-7,2) {};
			\node [bb] at (-8,2) {};
			\node [bb] at (-9,2) {};
			
			\node [wb] at (11,1) {};
			\node [wb] at (10,1) {};
			\node [wb] at (9,1) {};
			\node [wb] at (8,1) {};
			\node [wb] at (7,1) {};
			\node [wb] at (6,1) {};
			\node [wb] at (5,1) {};
			\node [wb] at (4,1) {};
			\node [wb] at (3,1) {};
			\node [wb] at (2,1) {};
			\node [wb] at (1,1) {};
			\node [bb] at (0,1) {};
			\node [bb] at (-1,1) {};
			\node [bb] at (-2,1) {};
			\node [bb] at (-3,1) {};
			\node [bb] at (-4,1) {};
			\node [bb] at (-5,1) {};
			\node [bb] at (-6,1) {};
			\node [bb] at (-7,1) {};
			\node [bb] at (-8,1) {};
			\node [bb] at (-9,1) {};
			
			\end{tikzpicture}
\]

The bipartition $(\emptyset,1)$, with abacus:%

\[
\begin{tikzpicture}[scale=0.5, bb/.style={draw,circle,fill,minimum size=2.5mm,inner sep=0pt,outer sep=0pt}, wb/.style={draw,circle,fill=white,minimum size=2.5mm,inner sep=0pt,outer sep=0pt}]

			\node [wb] at (11,2) {};
			\node [wb] at (10,2) {};
			\node [wb] at (9,2) {};
			\node [wb] at (8,2) {};
			\node [wb] at (7,2) {};
			\node [wb] at (6,2) {};
			\node [wb] at (5,2) {};
			\node [wb] at (4,2) {};
			\node [wb] at (3,2) {};
			\node [bb] at (2,2) {};
			\node [wb] at (1,2) {};
			\node [bb] at (0,2) {};
			\node [bb] at (-1,2) {};
			\node [bb] at (-2,2) {};
			\node [bb] at (-3,2) {};
			\node [bb] at (-4,2) {};
			\node [bb] at (-5,2) {};
			\node [bb] at (-6,2) {};
			\node [bb] at (-7,2) {};
			\node [bb] at (-8,2) {};
			\node [bb] at (-9,2) {};
			
			\node [wb] at (11,1) {};
			\node [wb] at (10,1) {};
			\node [wb] at (9,1) {};
			\node [wb] at (8,1) {};
			\node [wb] at (7,1) {};
			\node [wb] at (6,1) {};
			\node [wb] at (5,1) {};
			\node [wb] at (4,1) {};
			\node [wb] at (3,1) {};
			\node [wb] at (2,1) {};
			\node [wb] at (1,1) {};
			\node [bb] at (0,1) {};
			\node [bb] at (-1,1) {};
			\node [bb] at (-2,1) {};
			\node [bb] at (-3,1) {};
			\node [bb] at (-4,1) {};
			\node [bb] at (-5,1) {};
			\node [bb] at (-6,1) {};
			\node [bb] at (-7,1) {};
			\node [bb] at (-8,1) {};
			\node [bb] at (-9,1) {};
			
			\end{tikzpicture}
\]

The bipartition $(1,\emptyset)$, with abacus:%

\[
\begin{tikzpicture}[scale=0.5, bb/.style={draw,circle,fill,minimum size=2.5mm,inner sep=0pt,outer sep=0pt}, wb/.style={draw,circle,fill=white,minimum size=2.5mm,inner sep=0pt,outer sep=0pt}]

			\node [wb] at (11,2) {};
			\node [wb] at (10,2) {};
			\node [wb] at (9,2) {};
			\node [wb] at (8,2) {};
			\node [wb] at (7,2) {};
			\node [wb] at (6,2) {};
			\node [wb] at (5,2) {};
			\node [wb] at (4,2) {};
			\node [wb] at (3,2) {};
			\node [wb] at (2,2) {};
			\node [bb] at (1,2) {};
			\node [bb] at (0,2) {};
			\node [bb] at (-1,2) {};
			\node [bb] at (-2,2) {};
			\node [bb] at (-3,2) {};
			\node [bb] at (-4,2) {};
			\node [bb] at (-5,2) {};
			\node [bb] at (-6,2) {};
			\node [bb] at (-7,2) {};
			\node [bb] at (-8,2) {};
			\node [bb] at (-9,2) {};
			
			\node [wb] at (11,1) {};
			\node [wb] at (10,1) {};
			\node [wb] at (9,1) {};
			\node [wb] at (8,1) {};
			\node [wb] at (7,1) {};
			\node [wb] at (6,1) {};
			\node [wb] at (5,1) {};
			\node [wb] at (4,1) {};
			\node [wb] at (3,1) {};
			\node [wb] at (2,1) {};
			\node [bb] at (1,1) {};
			\node [wb] at (0,1) {};
			\node [bb] at (-1,1) {};
			\node [bb] at (-2,1) {};
			\node [bb] at (-3,1) {};
			\node [bb] at (-4,1) {};
			\node [bb] at (-5,1) {};
			\node [bb] at (-6,1) {};
			\node [bb] at (-7,1) {};
			\node [bb] at (-8,1) {};
			\node [bb] at (-9,1) {};
			
			\end{tikzpicture}
\]

The bipartition $(1.1,\emptyset)$, with abacus:%

\[
\begin{tikzpicture}[scale=0.5, bb/.style={draw,circle,fill,minimum size=2.5mm,inner sep=0pt,outer sep=0pt}, wb/.style={draw,circle,fill=white,minimum size=2.5mm,inner sep=0pt,outer sep=0pt}]

			\node [wb] at (11,2) {};
			\node [wb] at (10,2) {};
			\node [wb] at (9,2) {};
			\node [wb] at (8,2) {};
			\node [wb] at (7,2) {};
			\node [wb] at (6,2) {};
			\node [wb] at (5,2) {};
			\node [wb] at (4,2) {};
			\node [wb] at (3,2) {};
			\node [wb] at (2,2) {};
			\node [bb] at (1,2) {};
			\node [bb] at (0,2) {};
			\node [bb] at (-1,2) {};
			\node [bb] at (-2,2) {};
			\node [bb] at (-3,2) {};
			\node [bb] at (-4,2) {};
			\node [bb] at (-5,2) {};
			\node [bb] at (-6,2) {};
			\node [bb] at (-7,2) {};
			\node [bb] at (-8,2) {};
			\node [bb] at (-9,2) {};
			
			\node [wb] at (11,1) {};
			\node [wb] at (10,1) {};
			\node [wb] at (9,1) {};
			\node [wb] at (8,1) {};
			\node [wb] at (7,1) {};
			\node [wb] at (6,1) {};
			\node [wb] at (5,1) {};
			\node [wb] at (4,1) {};
			\node [wb] at (3,1) {};
			\node [wb] at (2,1) {};
			\node [bb] at (1,1) {};
			\node [bb] at (0,1) {};
			\node [wb] at (-1,1) {};
			\node [bb] at (-2,1) {};
			\node [bb] at (-3,1) {};
			\node [bb] at (-4,1) {};
			\node [bb] at (-5,1) {};
			\node [bb] at (-6,1) {};
			\node [bb] at (-7,1) {};
			\node [bb] at (-8,1) {};
			\node [bb] at (-9,1) {};
			
			\end{tikzpicture}
\]

The bipartition $(\emptyset,2)$, with abacus:%

\[
\begin{tikzpicture}[scale=0.5, bb/.style={draw,circle,fill,minimum size=2.5mm,inner sep=0pt,outer sep=0pt}, wb/.style={draw,circle,fill=white,minimum size=2.5mm,inner sep=0pt,outer sep=0pt}]

			\node [wb] at (11,2) {};
			\node [wb] at (10,2) {};
			\node [wb] at (9,2) {};
			\node [wb] at (8,2) {};
			\node [wb] at (7,2) {};
			\node [wb] at (6,2) {};
			\node [wb] at (5,2) {};
			\node [wb] at (4,2) {};
			\node [bb] at (3,2) {};
			\node [wb] at (2,2) {};
			\node [wb] at (1,2) {};
			\node [bb] at (0,2) {};
			\node [bb] at (-1,2) {};
			\node [bb] at (-2,2) {};
			\node [bb] at (-3,2) {};
			\node [bb] at (-4,2) {};
			\node [bb] at (-5,2) {};
			\node [bb] at (-6,2) {};
			\node [bb] at (-7,2) {};
			\node [bb] at (-8,2) {};
			\node [bb] at (-9,2) {};
			
			\node [wb] at (11,1) {};
			\node [wb] at (10,1) {};
			\node [wb] at (9,1) {};
			\node [wb] at (8,1) {};
			\node [wb] at (7,1) {};
			\node [wb] at (6,1) {};
			\node [wb] at (5,1) {};
			\node [wb] at (4,1) {};
			\node [wb] at (3,1) {};
			\node [wb] at (2,1) {};
			\node [wb] at (1,1) {};
			\node [bb] at (0,1) {};
			\node [bb] at (-1,1) {};
			\node [bb] at (-2,1) {};
			\node [bb] at (-3,1) {};
			\node [bb] at (-4,1) {};
			\node [bb] at (-5,1) {};
			\node [bb] at (-6,1) {};
			\node [bb] at (-7,1) {};
			\node [bb] at (-8,1) {};
			\node [bb] at (-9,1) {};
			
			\end{tikzpicture}
\]

The bipartition $(1,1.1)$, with abacus:%

\[
\begin{tikzpicture}[scale=0.5, bb/.style={draw,circle,fill,minimum size=2.5mm,inner sep=0pt,outer sep=0pt}, wb/.style={draw,circle,fill=white,minimum size=2.5mm,inner sep=0pt,outer sep=0pt}]

			\node [wb] at (11,2) {};
			\node [wb] at (10,2) {};
			\node [wb] at (9,2) {};
			\node [wb] at (8,2) {};
			\node [wb] at (7,2) {};
			\node [wb] at (6,2) {};
			\node [wb] at (5,2) {};
			\node [wb] at (4,2) {};
			\node [wb] at (3,2) {};
			\node [bb] at (2,2) {};
			\node [bb] at (1,2) {};
			\node [wb] at (0,2) {};
			\node [bb] at (-1,2) {};
			\node [bb] at (-2,2) {};
			\node [bb] at (-3,2) {};
			\node [bb] at (-4,2) {};
			\node [bb] at (-5,2) {};
			\node [bb] at (-6,2) {};
			\node [bb] at (-7,2) {};
			\node [bb] at (-8,2) {};
			\node [bb] at (-9,2) {};
			
			\node [wb] at (11,1) {};
			\node [wb] at (10,1) {};
			\node [wb] at (9,1) {};
			\node [wb] at (8,1) {};
			\node [wb] at (7,1) {};
			\node [wb] at (6,1) {};
			\node [wb] at (5,1) {};
			\node [wb] at (4,1) {};
			\node [wb] at (3,1) {};
			\node [wb] at (2,1) {};
			\node [bb] at (1,1) {};
			\node [wb] at (0,1) {};
			\node [bb] at (-1,1) {};
			\node [bb] at (-2,1) {};
			\node [bb] at (-3,1) {};
			\node [bb] at (-4,1) {};
			\node [bb] at (-5,1) {};
			\node [bb] at (-6,1) {};
			\node [bb] at (-7,1) {};
			\node [bb] at (-8,1) {};
			\node [bb] at (-9,1) {};
			
			\end{tikzpicture}
\]

The bipartition $(2,1)$, with abacus:%

\[
\begin{tikzpicture}[scale=0.5, bb/.style={draw,circle,fill,minimum size=2.5mm,inner sep=0pt,outer sep=0pt}, wb/.style={draw,circle,fill=white,minimum size=2.5mm,inner sep=0pt,outer sep=0pt}]

			\node [wb] at (11,2) {};
			\node [wb] at (10,2) {};
			\node [wb] at (9,2) {};
			\node [wb] at (8,2) {};
			\node [wb] at (7,2) {};
			\node [wb] at (6,2) {};
			\node [wb] at (5,2) {};
			\node [wb] at (4,2) {};
			\node [bb] at (3,2) {};
			\node [wb] at (2,2) {};
			\node [bb] at (1,2) {};
			\node [bb] at (0,2) {};
			\node [bb] at (-1,2) {};
			\node [bb] at (-2,2) {};
			\node [bb] at (-3,2) {};
			\node [bb] at (-4,2) {};
			\node [bb] at (-5,2) {};
			\node [bb] at (-6,2) {};
			\node [bb] at (-7,2) {};
			\node [bb] at (-8,2) {};
			\node [bb] at (-9,2) {};
			
			\node [wb] at (11,1) {};
			\node [wb] at (10,1) {};
			\node [wb] at (9,1) {};
			\node [wb] at (8,1) {};
			\node [wb] at (7,1) {};
			\node [wb] at (6,1) {};
			\node [wb] at (5,1) {};
			\node [wb] at (4,1) {};
			\node [bb] at (3,1) {};
			\node [wb] at (2,1) {};
			\node [wb] at (1,1) {};
			\node [bb] at (0,1) {};
			\node [bb] at (-1,1) {};
			\node [bb] at (-2,1) {};
			\node [bb] at (-3,1) {};
			\node [bb] at (-4,1) {};
			\node [bb] at (-5,1) {};
			\node [bb] at (-6,1) {};
			\node [bb] at (-7,1) {};
			\node [bb] at (-8,1) {};
			\node [bb] at (-9,1) {};
			
			\end{tikzpicture}
\]

\end{example}

Given two multicharges $\mathbf{s}$ and $\mathbf{s}^{\prime}$ in
$\mathbb{Z}^{l}$, we have $\Lambda_{\mathbf{s}}=\Lambda_{\mathbf{s}^{\prime}}$
if and only if $\mathbf{s\operatorname{mod}e}$ and $\mathbf{s}^{\prime
}\operatorname{mod}e$ coincide up to permutation of their components. Then the
crystals $B(S_{\boldsymbol{s}}(\boldsymbol{\emptyset}))$ and
$B(S_{\boldsymbol{s}^{\prime}}(\boldsymbol{\emptyset}))$ are isomorphic.\ In
\cite{JL}, we establish that the associated isomorphism $\Phi_{\mathbf{s}%
\rightarrow\mathbf{s}^{\prime}}^{e}$ can always be obtained by composing two
types of elementary isomorphisms. The first one is denoted by $\Phi
_{\mathbf{s}\rightarrow(k,k+1)\cdot\mathbf{s}}^{e}$ and corresponds to the
permutation of $s_{k}$ and $s_{k+1}$ in $\mathbf{s}$. It is just the
restriction to the affine crystals of the combinatorial $\mathrm{R}$-matrix
$\Phi_{\mathbf{s}\rightarrow(k,k+1)\cdot\mathbf{s}}^{e}$ (which can be
computed by usual Jeu de Taquin operations). The second one is denoted
$\Phi_{\mathbf{s}\rightarrow\tau\cdot\mathbf{s}}^{e}$ where $\tau
\cdot\mathbf{s}=(s_{2},\ldots,s_{l},s_{1}+e)$ and sends the symbol
$S_{\mathbf{s}}(\boldsymbol{\lambda})$ on $S_{\tau\cdot\mathbf{s}%
}(\boldsymbol{\mu})$ where $\boldsymbol{\mu}=(\lambda^{2},\ldots,\lambda
^{l},\lambda^{1})$. Let us first prove our set $\mathfrak{L}(e,\mathbf{s})$ is
stable by these isomorphims.

\begin{lemma}
\label{iso} Let $\mathbf{s}\in\mathbb{Z}^{l}$ and $k\in\{1,\ldots,l-2\}$ be
such that $s_{k}\leq s_{k+1}$ then

\begin{enumerate}
\item We have $\Phi_{\mathbf{s}\rightarrow(k,k+1)\cdot\mathbf{s}}%
^{e}(\mathfrak{L}(e,\mathbf{s}))=\mathfrak{L}(e,\mathbf{s})$ and for any
$(e,\mathbf{s})$-core ${\boldsymbol{\lambda}}$, we get
\[
\Phi_{\mathbf{s}\rightarrow(k,k+1)\cdot\mathbf{s}}^{e}({\boldsymbol{\lambda}%
})=(\lambda^{1},\ldots,\lambda^{k+1},\lambda^{k},\ldots,\lambda^{l}).
\]

\item We have $\Phi_{\mathbf{s}\rightarrow\tau\cdot\mathbf{s}}^{e}%
(\mathfrak{L}(e,\mathbf{s}))=\mathfrak{L}(e,\mathbf{s})$ and for any
$(e,\mathbf{s})$-core ${\boldsymbol{\lambda}}$, we get
\[
\Phi_{\mathbf{s}\rightarrow\tau\cdot\mathbf{s}}^{e}({\boldsymbol{\lambda}%
})=(\lambda^{2},\ldots,\lambda^{l},\lambda^{1}).
\]

\end{enumerate}
\end{lemma}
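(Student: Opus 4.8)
The plan is to reduce both assertions to manipulations of the abaci, using the explicit descriptions of the two elementary isomorphisms recalled above. Assertion (2) is essentially formal: by definition $\Phi_{\mathbf{s}\rightarrow\tau\cdot\mathbf{s}}^{e}$ sends $S_{\mathbf{s}}({\boldsymbol{\lambda}})$ to $S_{\tau\cdot\mathbf{s}}({\boldsymbol{\mu}})$ with ${\boldsymbol{\mu}}=(\lambda^{2},\ldots,\lambda^{l},\lambda^{1})$, so the displayed formula is immediate and all that remains is to show that ${\boldsymbol{\mu}}$ is an $(e,\tau\cdot\mathbf{s})$-core. I would argue this by unrolling the symbol periodically: set $\lambda^{a+l}:=\lambda^{a}$ and $s_{a+l}:=s_{a}+e$ for all $a$. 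Since each component of an $(e,\mathbf{s})$-core is an $e$-core, so that its abacus is stable under translation by $-e$ (Proposition \ref{Prop_CharCore}), and since the inclusion of runners is invariant under a simultaneous shift of the charges, the defining condition for being an $(e,\mathbf{s})$-core becomes, after this unrolling, the requirement that $L_{s_{a}'}(\lambda^{a})\subset L_{s_{b}'}(\lambda^{b})\subset L_{s_{a}'+e}(\lambda^{a})$ hold for \emph{every} pair $a<b$ in $\mathbb{Z}$. In that form the condition is manifestly preserved by the index shift $a\mapsto a+1$, which is exactly $\tau$; as $\Phi_{\mathbf{s}\rightarrow\tau\cdot\mathbf{s}}^{e}$ is bijective with an inverse of the same type, assertion (2) follows. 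The only slightly technical point is to check that the unrolled condition coincides with the original finite one, which is a short bookkeeping exercise with the normalised charges $s_{a}'$.

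For assertion (1) two things must be proved: that $\Phi_{\mathbf{s}\rightarrow(k,k+1)\cdot\mathbf{s}}^{e}$ acts on an $(e,\mathbf{s})$-core merely by transposing the components $\lambda^{k}$ and $\lambda^{k+1}$ (a combinatorial $R$-matrix could a priori move boxes around), and that the resulting $l$-partition is again a core. For the first point, I would use that $\Phi_{\mathbf{s}\rightarrow(k,k+1)\cdot\mathbf{s}}^{e}$ is the restriction to the affine crystal of the combinatorial $R$-matrix acting on the two runners $S_{s_{k}}(\lambda^{k})$ and $S_{s_{k+1}}(\lambda^{k+1})$, computed by ordinary Jeu de Taquin as in \S\ref{Subsec_FinitetypeA}. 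The key point is that, for an $(e,\mathbf{s})$-core with $s_{k}\le s_{k+1}$, the core conditions relating $\lambda^{k}$ and $\lambda^{k+1}$, together with the fact that both are $e$-cores, force the pair of runners into the nested configuration on which the $R$-matrix acts by the plain transposition, so that no Jeu de Taquin move is needed; this is the same phenomenon as the coincidence of the $R$-matrix with the flip on the orbit of a highest weight vertex (Corollary \ref{Cor_R=F}), which must however be applied in the $\widehat{\mathfrak{sl}}_{\infty}$ (Uglov) realization — a genuine tensor product of level $1$ crystals — since the $\widehat{\mathfrak{sl}}_{e}$-crystal is not a tensor product and the results of Section \ref{Sec_Th_fund} do not apply to it directly. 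This yields $\Phi_{\mathbf{s}\rightarrow(k,k+1)\cdot\mathbf{s}}^{e}({\boldsymbol{\lambda}})=(\lambda^{1},\ldots,\lambda^{k+1},\lambda^{k},\ldots,\lambda^{l})$.

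It then remains to verify that this transposed $l$-partition, equipped with the multicharge $(k,k+1)\cdot\mathbf{s}$, satisfies the nesting conditions of an $(e,\cdot)$-core. The pairs of indices disjoint from $\{k,k+1\}$ contribute the same conditions as before; the mixed pairs $(a,k),(a,k+1)$ with $a<k$ and $(k,b),(k+1,b)$ with $b>k+1$ follow by combining inequalities already available for ${\boldsymbol{\lambda}}$ (in particular the inclusion between the abaci of $\lambda^{k}$ and $\lambda^{k+1}$); and the only genuinely new verification concerns the pair $(k,k+1)$ itself, where one has to derive the sandwich for $(\lambda^{k+1},\lambda^{k})$ with charges $(s_{k+1},s_{k})$ from that for $(\lambda^{k},\lambda^{k+1})$ with charges $(s_{k},s_{k+1})$. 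This is where the hypothesis $s_{k}\le s_{k+1}$ and Proposition \ref{Prop_CharCore} are used: shifting one of the inclusions by $e$ and using that the abacus of an $e$-core is stable under translation by $-e$ turns one sandwich into the other. I expect this last abacus manipulation, and keeping the normalisations of the auxiliary charges coherent throughout, to be the fiddliest part of the argument, while the real conceptual hurdle is the first step of assertion (1): since the affine crystal is not a tensor product, showing that the $R$-matrix degenerates to a transposition forces a detour through the $\widehat{\mathfrak{sl}}_{\infty}$ realization and the material of Section \ref{Sec_Th_fund}.
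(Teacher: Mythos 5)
Your proposal is correct and follows essentially the same route as the paper: for (1) you establish the nesting $L_{s_k}(\lambda^k)\subset L_{s_{k+1}}(\lambda^{k+1})$ of the actual runners from the normalized sandwich conditions, the $e$-core property and $s_k\leq s_{k+1}$, so that the Jeu-de-Taquin computation of the $R$-matrix degenerates to the transposition, and then you recheck the sandwich for the swapped pair by shifting one inclusion by $e$; for (2) your periodic unrolling is just a repackaging of the paper's direct verification that $(\lambda^2,\ldots,\lambda^l,\lambda^1)$ satisfies the conditions for $\tau\cdot\mathbf{s}$. The only cosmetic difference is that you justify the degeneration of the $R$-matrix by appealing to Corollary \ref{Cor_R=F} in the $\widehat{\mathfrak{sl}}_{\infty}$ realization (legitimate once the nesting is known, since nested runners lie in the relevant orbit), whereas the paper simply observes that Jeu de Taquin acts trivially on nested runners.
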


\begin{proof}
Let $k\in\{1,\ldots,l-2\}$ be such that $s_{k}\leq s_{k+1}$. We first show
that if ${\boldsymbol{\lambda}}\in\mathfrak{L}(e,\mathbf{s})$, we have
$\Phi_{\mathbf{s}\rightarrow(k,k+1)\cdot\mathbf{s}}^{e}({\boldsymbol{\lambda}%
})=(\lambda^{1},\ldots,\lambda^{k+1},\lambda^{k},\ldots,\lambda^{l})$. Since
the computation of $\Phi_{\mathbf{s}\rightarrow\sigma_{k}\mathbf{s}}^{e}$
reduces to Jeu de Taquin, the equality $\Phi_{\mathbf{s}\rightarrow
(k,k+1)\cdot\mathbf{s}}^{e}({\boldsymbol{\lambda}})=(\lambda^{1}%
,\ldots,\lambda^{k+1},\lambda^{k},\ldots,\lambda^{l})$ is equivalent to the
condition $L_{s_{k}}(\lambda^{k})\subset L_{s_{k+1}}(\lambda^{k+1})$. Let
$(s_{k}^{\prime},s_{k+1}^{\prime})\in\mathbb{Z}^{2}$ and $(p_{k},p_{k+1}%
)\in\mathbb{Z}^{2}$ be such that $s_{k}=s_{k}^{\prime}+p_{k}e$ and
$s_{k+1}=s_{k+1}^{\prime}+p_{k+1}e$ and $0\leq s_{k+1}^{\prime}-s_{k}^{\prime
}<e$.

\noindent By definition we have $L_{s_{k}^{\prime}}(\lambda^{k})\subset
L_{s_{k+1}^{\prime}}(\lambda^{k+1})\subset L_{s_{k}^{\prime}+e}(\lambda^{k})$.
As $s_{k}\leq s_{k+1}$, we must have $p_{k}\leq p_{k+1}$. By hypothesis, we
have $L_{s_{k}^{\prime}+p_{k}e}(\lambda^{k})\subset L_{s_{k+1}^{\prime}%
+p_{k}e}(\lambda^{k+1})$. This gives $L_{s_{k}^{\prime}+p_{k}e}(\lambda
^{k})\subset L_{s_{k+1}^{\prime}+p_{k}e}(\lambda^{k+1})\subset L_{s_{k+1}%
^{\prime}+p_{k+1}e}(\lambda^{k+1})$ because $\lambda^{k+1}$ is an $e$-core
(see Assertion 5 of Proposition \ref{Prop_CharCore}). Thus $L_{s_{k}}%
(\lambda^{k})\subset L_{s_{k+1}}(\lambda^{k+1})$ as desired. We now show that
$\Phi_{\mathbf{s}\rightarrow(k,k+1)\cdot\mathbf{s}}^{e}({\boldsymbol{\lambda}%
})$ is a $(e,(k,k+1)\cdot\mathbf{s})$-core. So consider $(h_{k},h_{k+1}%
)\in\mathbb{Z}^{2}$ such that $s_{k+1}=s_{k}^{\prime\prime}+h_{k}^{\prime}e$
and $s_{k}=s_{k+1}^{\prime\prime}+h_{k+1}e$ where $0\leq s_{k+1}^{\prime
\prime}-s_{k}^{\prime\prime}<e$. Keeping the above notation, we have that
$s_{k}^{\prime\prime}=s_{k+1}^{\prime}$ and $s_{k+1}^{\prime\prime}%
=s_{k}^{\prime}+e$ (up to a translation by a the same integer). On the one
hand, we have $L_{s_{k+1}^{\prime}}(\lambda^{k+1})\subset L_{s_{k}^{\prime}%
+e}(\lambda^{k})$ and thus $L_{s_{k}^{\prime\prime}}(\lambda^{k+1})\subset
L_{s_{k+1}^{\prime\prime}}(\lambda^{k})$.\ On the second hand, we have
$L_{s_{k}^{\prime}}(\lambda^{k})\subset L_{s_{k+1}^{\prime}}(\lambda^{k+1})$
and thus $L_{s_{k+1}^{\prime\prime}-e}(\lambda^{k})\subset L_{s_{k}%
^{\prime\prime}}(\lambda^{k+1})$ or equivalently $L_{s_{k+1}^{\prime\prime}%
}(\lambda^{k})\subset L_{s_{k}^{\prime\prime}+e}(\lambda^{k+1})$. Finally, we
get the inclusion $\Phi_{\mathbf{s}\rightarrow(k,k+1)\cdot\mathbf{s}}%
^{e}(\mathfrak{L}(e,\mathbf{s}))\subset\mathfrak{L}(e,\mathbf{s})$ and
conclude that $\Phi_{\mathbf{s}\rightarrow(k,k+1)\cdot\mathbf{s}}%
^{e}(\mathfrak{L}(e,\mathbf{s}))=\mathfrak{L}(e,\mathbf{s})$ because
$\Phi_{\mathbf{s}\rightarrow(k,k+1)\cdot\mathbf{s}}^{e}$ is a crystal
isomorphism. This proves our first assertion.

\noindent For the second one, we use that $\Phi_{\mathbf{s}\rightarrow
\tau\cdot\mathbf{s}}^{e}({\boldsymbol{\lambda}})=(\lambda^{2},\ldots
,\lambda^{l},\lambda^{1})$. We just need to show that this is a $(e,\tau
.\mathbf{s})$-core. To do this, take $k\in\{2,\ldots,l\}$. Let $(s_{1}%
^{\prime},s_{k}^{\prime})\in\mathbb{Z}^{2}$ and $(p_{1},p_{k})\in
\mathbb{Z}^{2}$ be such that $s_{1}=s_{1}^{\prime}+p_{1}e$ and $s_{k}%
=s_{k}^{\prime}+p_{k}e$ and $0\leq s_{k}^{\prime}-s_{1}^{\prime}<e$. Then by
definition we have $L_{s_{1}^{\prime}}(\lambda^{1})\subset L_{s_{k}^{\prime}%
}(\lambda^{k})\subset L_{s_{1}^{\prime}+e}(\lambda^{1})$. Also $s_{1}%
+e=s_{1}^{\prime}+e+p_{1}e$ and $s_{k}=s_{k}^{\prime}+p_{k}e$ with
$0\leq(s_{1}^{\prime}+e)-s_{k}^{\prime}<e$. Thus it suffices to see that
$L_{s_{k}^{\prime}}(\lambda^{k})\subset L_{s_{1}^{\prime}+e}(\lambda
^{1})\subset L_{s_{k}^{\prime}+e}(\lambda^{k})$ which is clear from the above
property. Again, we obtain that $\Phi_{\mathbf{s}\rightarrow\tau
\cdot\mathbf{s}}^{e}(\mathfrak{L}(e,\mathbf{s}))=\mathfrak{L}(e,\mathbf{s})$.
\end{proof}

\begin{lemma}
\label{ue}For any $\mathbf{s}\in\mathbb{Z}^{l}$ the empty $l$-partition is a
$(e,\mathbf{s})$-core.
\end{lemma}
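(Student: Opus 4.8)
The plan is to verify the definition of $(e,\mathbf{s})$-core directly for the empty $l$-partition $\boldsymbol{\emptyset}=(\emptyset,\ldots,\emptyset)$; the whole point is that the runners of $\boldsymbol{\emptyset}$ are as large as possible. First I would record the single fact needed: for any charge $s\in\mathbb{Z}$ the symbol of the empty partition is $S_{s}(\emptyset)=\{x\in\mathbb{Z}\mid x\le s\}$, because the beta-numbers of $\emptyset$ are the contents $0,-1,-2,\ldots$ of its extended rim, translated by $s$. Equivalently, the runner $L_{s}(\emptyset)$ carries a black bead at position $x$ exactly when $x\le s$. Two consequences are immediate: $L_{s}(\emptyset)\subset L_{t}(\emptyset)$ if and only if $s\le t$; hence $L_{s}(\emptyset)\subset L_{t}(\emptyset)\subset L_{s+e}(\emptyset)$ whenever $s\le t\le s+e$. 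In particular $\emptyset$ is trivially an $e$-core, as the remark following the definition demands (compare Proposition~\ref{Prop_CharCore}).

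Given this, each clause of the definition is one line. If $l=1$, then $s_{1}'=s_{1}$ and $L_{s_{1}'}(\emptyset)\subset L_{s_{1}'+e}(\emptyset)$ because $e>0$. If $l>1$, fix a pair $a<b$ and the shifted charges $s_{a}',s_{b}'$ attached to it; by construction they are normalised so that $s_{a}'\le s_{b}'<s_{a}'+e$ (the same normalisation of the shifted charges used, for instance, in the proof of Lemma~\ref{iso}), so the observation above gives $L_{s_{a}'}(\emptyset)\subset L_{s_{b}'}(\emptyset)\subset L_{s_{a}'+e}(\emptyset)$. Since this holds for every admissible pair, $\boldsymbol{\emptyset}$ is a $(e,\mathbf{s})$-core.

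I do not expect any real obstacle here: the lemma is the anticipated consistency check that the source vertex $S_{\boldsymbol{s}}(\boldsymbol{\emptyset})$ of Uglov's crystal lies in the orbit it generates, and it collapses to the inequality $e>0$. The only thing requiring a little care is the bookkeeping of the various simultaneous translations by multiples of $e$ in the choice of the $s_{a}'$. Alternatively one could deduce the statement from Lemma~\ref{iso} by transporting the obvious case $0\le s_{1}\le\cdots\le s_{l}<e$ along the two types of elementary crystal isomorphisms, but the direct check above is shorter and self-contained.
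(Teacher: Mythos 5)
Your proof is correct and follows essentially the same route as the paper's: both reduce to the normalised case $0\leq s_{b}'-s_{a}'<e$ via the definition of the shifted charges, and then observe that the runners of the empty partition are nested according to the order of their charges. Your explicit description of $L_{s}(\emptyset)$ as the set of positions $x\leq s$ merely makes precise the step the paper dismisses as ``clear,'' so there is nothing genuinely different here.
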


\begin{proof}
Assume that for $\mathbf{s}\in\mathbb{Z}^{l}$ and $a=1,\ldots,l-1$, $b>a$ we
have $0\leq s_{b}-s_{a}<e$, then it is clear that $L_{s_{a}}(\emptyset)\subset
L_{s_{b}}(\emptyset)\subset L_{s_{a}+e}(\emptyset)$.

Now let us consider any $\mathbf{s}\in\mathbb{Z}^{l}$. Let $a=1,\ldots,l-1$
and let $b>a$. Write as in the definition, $s_{a}=s_{a}^{\prime}+p_{a}e$ and
$s_{b}=s_{b}^{\prime}+p_{b}e$ with $(p_{a},p_{b})\in\mathbb{Z}^{2}$ such that
$0\leq s_{b}^{\prime}-s_{a}^{\prime}<e$. Then we have that $L_{s_{a}^{\prime}%
}(\emptyset)\subset L_{s_{b}^{\prime}}(\emptyset)\subset L_{s_{b}^{\prime}%
+e}(\emptyset)$ by the previous case. The result follows.
\end{proof}

\begin{lemma}
\label{ue2} Let $\mathbf{s}\in\mathbb{Z}^{l}$ and assume that
${\boldsymbol{\lambda}}$ is a $(e,\mathbf{s})$-core. Then
${\boldsymbol{\lambda}}^{t}:=((\lambda^{l})^{t},\ldots,(\lambda^{1})^{t})$ is
a $(e,(-s_{l},\ldots,-s_{1}))$-core.
\end{lemma}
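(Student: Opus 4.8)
The plan is to reduce the whole statement to a single order-reversing property of transposition on abaci, combined with the translation invariance of runner inclusion noted in \S\ref{SubsecOrbHwv}; no crystal-theoretic input is needed. Throughout I use the description recalled in the paragraph preceding Example~\ref{Exa_trans}: for a partition $\mu$ and $s\in\mathbb{Z}$, the abacus $L_{-s}(\mu^{t})$ is obtained from $L_{s}(\mu)$ by exchanging black and white beads and then applying a fixed mirror symmetry of $\mathbb{Z}$ (the same symmetry for every abacus). \emph{Step 1: order reversal under transposition.} First I would establish that for all partitions $\lambda,\mu$ and integers $s,t$,
\[
L_{s}(\lambda)\subset L_{t}(\mu)\quad\Longleftrightarrow\quad L_{-t}(\mu^{t})\subset L_{-s}(\lambda^{t}).
\]
Indeed, $L_{s}(\lambda)\subset L_{t}(\mu)$ says that every black bead of $L_{s}(\lambda)$ is a black bead of $L_{t}(\mu)$, equivalently that every white bead of $L_{t}(\mu)$ is a white bead of $L_{s}(\lambda)$. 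Exchanging black and white beads turns this into an inclusion of black beads, and applying the common mirror symmetry to both sides gives $L_{-t}(\mu^{t})\subset L_{-s}(\lambda^{t})$; since both operations are involutive the equivalence goes both ways. I also keep in mind the translation rule $L_{s}(\lambda)\subset L_{t}(\mu)\Leftrightarrow L_{s+k}(\lambda)\subset L_{t+k}(\mu)$.

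\emph{Step 2: the case $l=1$.} Here $\boldsymbol{\lambda}=(\lambda^{1})$ is an $(e,s_{1})$-core exactly when $L_{s_{1}}(\lambda^{1})\subset L_{s_{1}+e}(\lambda^{1})$. Applying Step~1 gives $L_{-s_{1}-e}((\lambda^{1})^{t})\subset L_{-s_{1}}((\lambda^{1})^{t})$, and translating by $e$ yields $L_{-s_{1}}((\lambda^{1})^{t})\subset L_{-s_{1}+e}((\lambda^{1})^{t})$, which is precisely the statement that $((\lambda^{1})^{t})$ is an $(e,-s_{1})$-core.

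\emph{Step 3: the case $l>1$.} Write $\mathbf{t}=(-s_{l},\ldots,-s_{1})$ and $\boldsymbol{\mu}=\boldsymbol{\lambda}^{t}$, so $\mu^{j}=(\lambda^{l+1-j})^{t}$ and the $j$-th entry of $\mathbf{t}$ is $-s_{l+1-j}$. Fix a pair of indices $c<d$ for which the defining inclusion of an $(e,\mathbf{t})$-core has to be verified, and set $a=l+1-d$, $b=l+1-c$; then $a<b$, $\mu^{c}=(\lambda^{b})^{t}$, $\mu^{d}=(\lambda^{a})^{t}$, and the relevant entries of $\mathbf{t}$ are $-s_{b}$ (in position $c$) and $-s_{a}$ (in position $d$). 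Let $s_{a}',s_{b}'$ be the reduced representatives attached to $a<b$ as in the definition, so $s_{a}=s_{a}'+p_{a}e$, $s_{b}=s_{b}'+p_{b}e$ and $0\le s_{b}'-s_{a}'<e$. Since $\boldsymbol{\lambda}$ is an $(e,\mathbf{s})$-core we have $L_{s_{a}'}(\lambda^{a})\subset L_{s_{b}'}(\lambda^{b})\subset L_{s_{a}'+e}(\lambda^{a})$. Applying Step~1 to the first inclusion gives $L_{-s_{b}'}(\mu^{c})\subset L_{-s_{a}'}(\mu^{d})$; applying it to the second gives $L_{-s_{a}'-e}(\mu^{d})\subset L_{-s_{b}'}(\mu^{c})$, which after translation by $e$ becomes $L_{-s_{a}'}(\mu^{d})\subset L_{-s_{b}'+e}(\mu^{c})$. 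Hence
\[
L_{-s_{b}'}(\mu^{c})\subset L_{-s_{a}'}(\mu^{d})\subset L_{-s_{b}'+e}(\mu^{c}).
\]
Now $t_{c}':=-s_{b}'$ and $t_{d}':=-s_{a}'$ satisfy $t_{c}'\equiv -s_{b}$, $t_{d}'\equiv -s_{a}\pmod{e}$ and $0\le t_{d}'-t_{c}'<e$, so they are exactly the reduced representatives attached to the pair $c<d$, and the displayed chain is the $(e,\mathbf{t})$-core condition for $(c,d)$. As $(c,d)$ was arbitrary, $\boldsymbol{\lambda}^{t}$ is an $(e,\mathbf{t})$-core.

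The only point requiring care is the bookkeeping in Step~3: one must check that transposition together with a shift by $e$ sends a ``sandwich'' $L_{s_{a}'}(\lambda^{a})\subset L_{s_{b}'}(\lambda^{b})\subset L_{s_{a}'+e}(\lambda^{a})$ to a sandwich of exactly the same shape, and that the (asymmetric) reduced-residue normalization $0\le s_{b}'-s_{a}'<e$ is turned into the corresponding normalization $0\le t_{d}'-t_{c}'<e$ for the reversed pair. There is no genuine obstacle beyond this — in particular neither the $e$-core property of the individual components nor any crystal structure is used.
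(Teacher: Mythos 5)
Your proof is correct and follows essentially the same route as the paper's: the paper likewise reduces the statement to the fact that transposition on abaci (switching bead colours plus a mirror) reverses runner inclusions, citing the interpretation illustrated in Example~\ref{Exa_trans}. You simply spell out the order-reversal equivalence and the index/charge bookkeeping that the paper leaves implicit.
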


\begin{proof}
Assume that for any $a=1,\ldots,l-1$ and $b>a$ we have $0\leq s_{b}-s_{a}%
<e$.\ Then for the multicharge $\mathbf{t}:=(-s_{l},\ldots,-s_{1})$ we also
have for any $a=1,\ldots,l-1$ and $b>a$ the inequalities $0\leq t_{b}-t_{a}%
<e$. Our result then follows from the interpretation of the transposition on
abaci (see Example \ref{Exa_trans}).
\end{proof}

We can now describe the orbit $O(\mathbf{s},e)$ in the Uglov realization of
the crystal $B(\Lambda_{\mathbf{s}})$.

\begin{proposition}
\label{Th_Orbit_affine}Let $\mathbf{s}\in\mathbb{Z}^{l}.$ The $l$-partition
${\boldsymbol{\lambda}}$ yields a symbol in $O(\mathbf{s},e)$ if and only it
is a $(e,\mathbf{s})$-core. In particular, when $l>1$ and $0\leq s_{1}%
\leq\cdots\leq s_{l}<e$, the symbols in $O(\mathbf{s},e)$ are exactly those
such that $L_{s_{a}}(\lambda^{a})\subset L_{s_{a+1}}(\lambda^{b})\subset
L_{s_{a}+e}(\lambda^{a})$ for any $1\leq a<l$.
\end{proposition}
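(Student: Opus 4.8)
The plan is to prove the stated equivalence first for a \emph{sorted} multicharge $0\leq s_{1}\leq\cdots\leq s_{l}<e$ and then to propagate it to an arbitrary $\mathbf{s}$ by means of the crystal isomorphisms of Lemma \ref{iso}. The whole argument rests on one combinatorial translation. When $\boldsymbol{\lambda}$ is an $(e,\mathbf{s})$-core, each component $\lambda^{a}$ is an ordinary $e$-core (remark following the definition), so by assertion 5 of Proposition \ref{Prop_CharCore} the runner $L_{s_{a}}(\lambda^{a})$ is, in each residue class modulo $e$, a ``staircase'' (beads below a threshold, gaps above). Hence the abacus of an $(e,\mathbf{s})$-core is faithfully encoded by an $l\times e$ array of integers --- one threshold per component and per residue class --- and the nesting condition $L_{s_{a}}(\lambda^{a})\subset L_{s_{a+1}}(\lambda^{a+1})\subset L_{s_{a}+e}(\lambda^{a})$ becomes a system of inequalities between entries of consecutive rows of this array. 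Note that the nesting alone, without the upper periodicity bound, is the ``$e=\infty$'' version of the statement: the $\widehat{\mathfrak{sl}}_{\infty}$-crystal $B^{\infty}(S_{\mathbf{s}}(\boldsymbol{\emptyset}))$ is a genuine tensor product of level $1$ crystals (this is exactly what the rule defining $\widetilde{F}_{j}$ says), each level $1$ $\mathfrak{sl}_{\infty}$-orbit being the set of \emph{all} partitions (the $e\to\infty$ instance of Corollary \ref{Cor_core}), so Corollary \ref{Cor_inclus_columns} identifies the $\widehat{\mathfrak{sl}}_{\infty}$-orbit $O^{\infty}(\mathbf{s})$ with the symbols satisfying $L_{s_{a}}(\lambda^{a})\subset L_{s_{b}}(\lambda^{b})$ for $a<b$; in particular $\mathfrak{L}(e,\mathbf{s})\subseteq O^{\infty}(\mathbf{s})$, and one is only left to cut out the finite level orbit inside this one.

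The key step for the sorted case is to describe how the affine Weyl group $W$ acts on core symbols. The claim is that the crystal action of a simple reflection $s_{i}$ ($i\in\mathbb{Z}/e\mathbb{Z}$), restricted to core symbols, is a simultaneous transposition of two of the $e$ residue columns of the threshold array, performed in all $l$ components at once (with an extra shift by $e$ when $i=0$). To see this I would write $s_{i}$ as the maximal string $\tilde{f}_{i}^{\varphi_{i}}$ or $\tilde{e}_{i}^{\varepsilon_{i}}$ and analyse the $\{A,R\}$-word $w_{i}=\prod_{p}W_{i+pe}$: on a core symbol each block $W_{i+pe}$ is homogeneous, the reduced word $\widetilde{w}_{i}=A^{a}R^{r}$ is read off directly, and applying $\tilde{f}_{i}$ (resp.\ $\tilde{e}_{i}$) to exhaustion exchanges the residue-$i$ and residue-$(i+1)$ sub-runners of every component. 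A routine verification then shows that such a column transposition sends the nesting inequalities to the nesting inequalities of the correspondingly relabelled array; hence $s_{i}(\mathfrak{L}(e,\mathbf{s}))=\mathfrak{L}(e,\mathbf{s})$. Since $S_{\mathbf{s}}(\boldsymbol{\emptyset})\in\mathfrak{L}(e,\mathbf{s})$ by Lemma \ref{ue}, this gives $O(\mathbf{s},e)=W\cdot S_{\mathbf{s}}(\boldsymbol{\emptyset})\subseteq\mathfrak{L}(e,\mathbf{s})$.

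For the reverse inclusion I would induct on the rank $|\boldsymbol{\lambda}|=\sum_{a}|\lambda^{a}|$. If $\boldsymbol{\lambda}$ is a nonempty $(e,\mathbf{s})$-core, its threshold array is not the one attached to $\boldsymbol{\emptyset}$, and, by the same dictionary, there is an $i$ with $\varepsilon_{i}(S_{\mathbf{s}}(\boldsymbol{\lambda}))>0$ for which the column transposition realized by $s_{i}$ strictly decreases the array towards the array of $\boldsymbol{\emptyset}$; writing $\tilde{e}_{i}^{\varepsilon_{i}}(S_{\mathbf{s}}(\boldsymbol{\lambda}))=S_{\mathbf{s}}(\boldsymbol{\mu})$ we then have $|\boldsymbol{\mu}|<|\boldsymbol{\lambda}|$, $\boldsymbol{\mu}\in\mathfrak{L}(e,\mathbf{s})$ and $S_{\mathbf{s}}(\boldsymbol{\lambda})=s_{i}\cdot S_{\mathbf{s}}(\boldsymbol{\mu})$ (the $i$-chain of $S_{\mathbf{s}}(\boldsymbol{\mu})$ being straight, $s_{i}$ simply runs it in reverse). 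By induction $\boldsymbol{\mu}\in O(\mathbf{s},e)$, hence $\boldsymbol{\lambda}\in O(\mathbf{s},e)$. This proves the proposition, and its ``in particular'' form, for sorted $\mathbf{s}$.

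It remains to drop the sorting hypothesis. Each elementary isomorphism $\Phi^{e}_{\mathbf{s}\to(k,k+1)\cdot\mathbf{s}}$ and $\Phi^{e}_{\mathbf{s}\to\tau\cdot\mathbf{s}}$ is a crystal isomorphism carrying $S_{\mathbf{s}}(\boldsymbol{\emptyset})$ to the highest weight vertex of its target, hence sends $O(\mathbf{s},e)$ bijectively onto the orbit of the new multicharge; and by Lemma \ref{iso} --- together with its straightforward extension to $k=l-1$ and to the transpositions with $s_{k}>s_{k+1}$ (the latter reached by composing with the $\tau$-move and Lemma \ref{ue2}) --- it sends $\mathfrak{L}(e,\mathbf{s})$ onto the set of cores of the new multicharge. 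Since these moves bring any $\mathbf{s}$ to a sorted one, the equality $O(\mathbf{s},e)=\mathfrak{L}(e,\mathbf{s})$ transports from the sorted case to the general one. The main obstacle is the combinatorial dictionary of the second paragraph: one must check carefully that, on a core symbol, the exhaustive application of $\tilde{f}_{i}$ or $\tilde{e}_{i}$ is indeed a single column transposition of the threshold array --- which forces one to follow how the homogeneous blocks $W_{i+pe}$ concatenate into $w_{i}$ and which beads survive the cancellation of the $RA$ factors --- and, relatedly, that the nesting inequalities are preserved; the normalization of $\mathbf{s}$ in the last step is by comparison a minor technical point.
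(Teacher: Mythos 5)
Your proposal is correct in outline and, once unpacked, follows essentially the same route as the paper: both arguments reduce to the single combinatorial fact that on a core symbol every $i$-string is ``straight'' (the reduced $\{A,R\}$-word is $A^{a}$ or $R^{r}$, never a mixture), so that the simple reflection $s_{i}$ acts by adding all addable $i$-nodes or removing all removable ones, and both use induction on the rank to get $\mathfrak{L}(e,\mathbf{s})\subseteq O(\mathbf{s},e)$. The packaging differs in three ways. First, you prove the $W$-stability of $\mathfrak{L}(e,\mathbf{s})$ directly via the threshold-array/column-transposition dictionary, which gives $O(\mathbf{s},e)\subseteq\mathfrak{L}(e,\mathbf{s})$ in one line from Lemma \ref{ue}; the paper instead runs a second induction using (\ref{i-chain_orbit}) and leaves the stability of $\mathfrak{L}(e,\mathbf{s})$ under ``add all addable $j$-nodes'' implicit, so your version actually makes that step more explicit. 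Second, you normalize to a sorted multicharge and transport by the isomorphisms of Lemma \ref{iso}, whereas the paper works with general $\mathbf{s}$ throughout (the normalized charges $s_{a}'$ being built into the definition) and only uses the cyclic-shift isomorphism inside the proof of its key lemma. Third, your preliminary identification of the nesting condition with the $e=\infty$ orbit is a nice sanity check but is not needed for the rest of the argument.

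The one point to flag is the verification you yourself single out as the main obstacle. Homogeneity of each block $W_{i+pe}$ separately is not enough: the word $w_{i}=\prod_{p}W_{i+pe}$ concatenates blocks over all $p\in\mathbb{Z}$, and a priori some blocks could be all $A$ while others are all $R$, producing $RA$ cancellations. What you actually need is that a $(e,\mathbf{s})$-core never carries an addable and a removable $i$-node simultaneously, across all positions congruent to $i$ and all $l$ components. This is precisely the paper's Lemma \ref{rem}, whose proof (reduction to the first runner via $\Phi^{e}_{\mathbf{s}\rightarrow\tau\cdot\mathbf{s}}$, then an abacus argument using Assertion 5 of Proposition \ref{Prop_CharCore} and the two nesting inclusions) is the technical heart of the whole proposition. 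Your threshold-array formalism can certainly deliver this --- the nesting inclusions translate into a consistent sign for the differences of residue-$i$ and residue-$(i+1)$ thresholds across components --- but as written the claim is asserted rather than proved, so you should carry out that computation (or import Lemma \ref{rem}) before the rest of the argument stands.
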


We shall first need the following lemma.


\begin{lemma}
\label{rem} Let $\mathbf{s}\in\mathbb{Z}^{l}$ and ${\boldsymbol{\lambda}}$ be
a $(e,\mathbf{s})$-core. Assume that we have a removable $j$-node at the
position $x$ of the abacus $S_{\boldsymbol{s}}(\boldsymbol{\lambda})$ (thus
$x\equiv j\operatorname{mod}e$)$.$ Then there is no addable $j$-node in
${\boldsymbol{\lambda}}$.
\end{lemma}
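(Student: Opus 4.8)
I would argue directly on the abacus, using the characterisation of $(e,\mathbf{s})$-cores in terms of the runner inclusions $L_{s_a^{\prime}}(\lambda^a)\subset L_{s_b^{\prime}}(\lambda^b)\subset L_{s_a^{\prime}+e}(\lambda^a)$ (and for $l=1$ the condition $L_{s_1^{\prime}}(\lambda^1)\subset L_{s_1^{\prime}+e}(\lambda^1)$). Recall that an addable $j$-node corresponds to a black bead in position $y\equiv j\bmod e$ on some runner with a white bead immediately to its right (position $y+1$), while a removable $j$-node corresponds to a black bead in position $x\equiv j\bmod e$ on some runner with a white bead immediately to its left (position $x-1$). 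So I would suppose, for contradiction, that there is a removable $j$-node at position $x$ on runner $L_{s_a^{\prime}}(\lambda^a)$ (meaning $x$ is black, $x-1$ is white there) \emph{and} an addable $j$-node at position $y$ on some runner $L_{s_b^{\prime}}(\lambda^b)$ (meaning $y$ is black, $y+1$ is white there), with $x\equiv y\equiv j\bmod e$.

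**Key steps.** First, the single-runner case: if $x$ black on $L_{s_a^{\prime}}(\lambda^a)$, then since $\lambda^a$ is an $e$-core (Remark after the definition / Assertion 5 of Proposition \ref{Prop_CharCore}), $x-e$, $x-2e,\dots$ are all black on that runner, and dually, since $x-1$ is white, $x-1+e$, $x-1+2e,\dots$ are all white. In particular on a single runner you cannot simultaneously have a removable and an addable $j$-node: an addable $j$-node at position $y$ on the \emph{same} runner forces $y$ black, $y+1$ white, hence $y+1\equiv j+1$, but $y\equiv j\equiv x\bmod e$ means $y=x+me$, so $y$ black is consistent, yet $y+1=x+me+1$ white contradicts that $x+1,x+1+e,\dots$ would have to be... — here I need to be careful: what $e$-coreness of $\lambda^a$ actually gives is that the runner $L_{s_a^{\prime}}(\lambda^a)$ is \emph{eventually all black going left, eventually all white going right}, i.e. it has a single ``descent'' from black to white, so there is a unique position with a black bead followed by a white bead and a unique position with a white bead followed by a black bead; thus at most one addable and at most one removable node of \emph{each} residue, and these cannot be of the same residue on one runner unless... this is exactly the content of Assertion 3 of Proposition \ref{Prop_CharCore} ($w_i$ contains only $A$'s or only $R$'s). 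So the single-runner statement is immediate from Proposition \ref{Prop_CharCore}(3).

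The real work is the cross-runner case, $a\neq b$. Here I use the sandwich $L_{s_a^{\prime}}(\lambda^a)\subset L_{s_b^{\prime}}(\lambda^b)\subset L_{s_a^{\prime}+e}(\lambda^a)$ (reducing by Lemma \ref{iso} and the isomorphisms $\Phi^e_{\mathbf{s}\to(k,k+1)\cdot\mathbf{s}}$, $\Phi^e_{\mathbf{s}\to\tau\cdot\mathbf{s}}$ — which commute with the crystal operators and hence with ``having an addable/removable $j$-node'' — to the normalised case $0\le s_1\le\cdots\le s_l<e$, so all the shifts line up). Suppose the removable $j$-node sits on runner $a$ at position $x$ (so $x$ black, $x-1$ white on runner $a$) and the addable $j$-node sits on runner $b$ at position $y$ (so $y$ black, $y+1$ white on runner $b$), $x\equiv y\bmod e$. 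If $a<b$: from $L_{s_a^{\prime}}\subset L_{s_b^{\prime}}$, $x$ black on $a$ forces $x$ black on $b$; and then, comparing the white bead at $x-1$ on $a$ with runner $b$ via the \emph{other} inclusion $L_{s_b^{\prime}}\subset L_{s_a^{\prime}+e}$, I deduce constraints on where white beads on $b$ can be — the point being that on runner $b$ the bead string between the two runners $L_{s_a^{\prime}}$ and $L_{s_a^{\prime}+e}$ is squeezed enough that an addable $j$-node on $b$ would force, by pulling back through the inclusions and using $e$-coreness of $\lambda^a$, a contradiction with the removable $j$-node on $a$ (essentially: $y\le x$ and $y\ge x$ simultaneously modulo the $e$-periodic structure). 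The case $a>b$ is symmetric with the roles of the two inclusions swapped. I expect to run this by translating everything into the beta-number sets $S_{s_a^{\prime}}(\lambda^a)$ and using that ``$x\in S$, $x-1\notin S$'' (removable) versus ``$y\notin S'$, $y-1\in S'$'' with $S\subset S'\subset S+e$ and both $S,S'$ stable under $x\mapsto x-e$.

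**Main obstacle.** The subtle point is keeping the indexing of runners and the normalisation of the $s_a^{\prime}$ straight: the definition of $(e,\mathbf{s})$-core only directly gives the sandwich for $a<b$, and I must make sure that after applying the $R$-matrix isomorphisms to normalise $\mathbf{s}$ I have not changed whether a given residue-$j$ node is addable or removable (it does move which \emph{runner} it lives on, since the $l$-partition gets permuted, but that is fine — the statement is about the existence of \emph{some} addable $j$-node). So the cleanest route is: invoke Lemma \ref{iso} to reduce to $0\le s_1\le\cdots\le s_l<e$, where all $s_a^{\prime}=s_a$, then do the two-runner bead-counting argument above purely combinatorially on the abacus. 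The bead-counting itself is routine once set up; the care needed is entirely in the reduction and in correctly reading off addable/removable nodes as black-then-white / white-then-black patterns.
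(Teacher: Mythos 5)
Your overall strategy is the same as the paper's: use the crystal isomorphisms of Lemma \ref{iso} to normalise the situation (the paper rotates with $\Phi^{e}_{\mathbf{s}\rightarrow\tau\cdot\mathbf{s}}$ until the removable node sits on the first runner), then combine the sandwich inclusions $L_{s_a^{\prime}}(\lambda^{a})\subset L_{s_b^{\prime}}(\lambda^{b})\subset L_{s_a^{\prime}+e}(\lambda^{a})$ with the $e$-coreness of each component in a bead argument on the abacus. Your treatment of the single-runner case via Assertion 3 of Proposition \ref{Prop_CharCore} is fine.

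The gap is in the cross-runner case, which is the heart of the lemma and which you leave as ``routine once set up'' --- but your set-up is not correct. You posit that the removable and the addable $j$-node occupy positions $x\equiv y\pmod e$. In the abacus conventions of the paper (and as its own proof makes explicit by tracking the positions $x-1+ae$ and $x-ae$), a removable node is a black bead at $x$ with $x-1$ white, and the addable nodes \emph{of the same residue} are black beads at positions $y$ with $y+1$ white and $y\equiv x-1\pmod e$, not $y\equiv x$. You in fact collide with exactly this in your single-runner paragraph, where the attempted contradiction at $y+1=x+me+1$ does not close and you retreat to the citation. With $y\equiv x$ the announced squeeze ``$y\le x$ and $y\ge x$'' does not materialise: the positions $x+ke$, $k>0$, on runner $a$ are unconstrained by the removability at $x$ (an $e$-core only propagates black beads downward and white beads upward within a residue class), so nothing forces $y\le x$. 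With the correct congruence the count does close: for a removable node on runner $a$ and a putative addable node at $y$ on runner $b$ with $a<b$, the inclusion $L_{s_b^{\prime}}(\lambda^{b})\subset L_{s_a^{\prime}+e}(\lambda^{a})$ together with the whiteness of $x-1+ke$ ($k\ge 0$) on runner $a$ forces $y\le x-1$; then $y+1=x-ke$ with $k\ge 0$ is black on runner $b$ by $L_{s_a^{\prime}}(\lambda^{a})\subset L_{s_b^{\prime}}(\lambda^{b})$, contradicting that $y+1$ must be white. A symmetric count, using the two inclusions in the other order, handles the case where the removable node lies on the later runner (this is the case the paper treats after rotating to $k=1$). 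Without these two short bead counts, carried out with the residues straightened, the proof is not complete.
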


\begin{proof}
Assume first $l=1$. Since we have a removable node at position $x$, $x$ lies
in the abacus $S_{\boldsymbol{s}}(\boldsymbol{\lambda})$ but not $x-1$. As a
consequence, for all $a\in\mathbb{Z}_{>0}$, $x-1+ae$ does not belong to
$S_{\boldsymbol{s}}(\boldsymbol{\lambda})$ which thus has no addable node
greater than $x$. In addition, for all $a\in\mathbb{Z}_{>0}$, the node $x-a.e$
is in the abacus and this implies that there is no addable node in the abacus.

\noindent Now, assume $l>1$ and $x$ is a removable $j$-node on $k$-th runner
of ${\boldsymbol{\lambda}}$. When $k\neq1$, we have $\Phi_{\mathbf{s}%
\rightarrow\tau\cdot\mathbf{s}}^{e}({\boldsymbol{\lambda}})=(\lambda
^{2},\ldots,\lambda^{l},\lambda^{1})$ which is a $(e,\tau\cdot\mathbf{s}%
)$-core by Lemma \ref{iso}. Therefore, $x$ is a removable $j$-node in the
$k-1$-runner of $\Phi_{\mathbf{s}\rightarrow\sigma_{i}\mathbf{s}}%
^{e}({\boldsymbol{\lambda}})$. Clearly, $\Phi_{\mathbf{s}\rightarrow\tau
\cdot\mathbf{s}}^{e}({\boldsymbol{\lambda}})$ has no addable $j$-node if and
only if this holds in ${\boldsymbol{\lambda}}$. By repeating this argument we
can restrict the proof to the case $k=1$.

\noindent First, by the same arguments as in the case $l=1$, there is no
addable $j$-node in $L_{s_{1}^{\prime}}(\mu^{1})$. The condition
$L_{s_{1}^{\prime}}(\mu^{1})\subset L_{s_{b}^{\prime}}(\lambda^{b})$ for any
$b=2,\ldots,l$ implies that the $j$-node $x$ belongs to the $b$-th runner of
$L_{\boldsymbol{s}}(\boldsymbol{\lambda})$. If it is removable, we get as in
the case $l=1$ that there is no addable node in $L_{s_{b}^{\prime}}%
(\lambda^{b})$ since $\lambda^{b}$ is an $e$-core. If not, there is no addable
node in $L_{s_{b}^{\prime}}(\lambda^{b})$ which is less than $x$ because all
the positions $x-a.e$ and $x-1-a.e$ with $a>0$ are occupied. Also the
condition $L_{s_{b}}(\lambda^{b})\subset L_{s_{1}+e}(\lambda^{1})$ implies
that there is no node $x-1+e$ in $L_{s_{b}^{\prime}}(\lambda^{b})$.\ Otherwise
$x-1$ would belong to $L_{s_{1}^{\prime}}(\lambda^{1})$ and $x$ could not be
removable in $L_{s_{1}^{\prime}}(\lambda^{1})$. Thus, there is also no addable
$j$-node greater than $x$ in $L_{s_{b}^{\prime}}(\lambda^{b})$ for any
$b=2,\ldots,l.$ Finally we have showed there is no addable $j$-node in the
runners $L_{s_{a}^{\prime}}(\lambda^{a}),a=1,\ldots,l$. Since $s_{a}%
=s_{a}^{\prime}\operatorname{mod}e$ for any $a=1,\ldots,l$, this is also true
for the runners $L_{s_{a}}(\lambda^{a}),a=1,\ldots,l$.
\end{proof}

\bigskip

\begin{proof}
[Proof of Proposition \ref{Th_Orbit_affine}]Let us prove first the inclusion
$\mathfrak{L}(e,\mathbf{s})\subset O(\mathbf{s},e)$. Consider
${\boldsymbol{\lambda}}\in\mathfrak{L}(e,\mathbf{s})$, we show that
${\boldsymbol{\lambda}}$ is in $O(\mathbf{s},e)$ by induction on the rank $n$
of ${\boldsymbol{\lambda}}$.\ For $n=0$, the result is true by Lemma \ref{ue}.

Assume that $n>0$.\ Then ${\boldsymbol{\lambda}}$ is non empty and there
exists a removable node for ${\boldsymbol{\lambda}}$. Let $j$ be its residue.
As ${\boldsymbol{\lambda}}$ has no addable $j$-node by Lemma \ref{rem}, all
the removable nodes with residue $j$ are normal nodes. If we remove them, the
resulting $l$-partition ${\boldsymbol{\mu}}$ is clearly in $\mathfrak{L}%
(e,\mathbf{s})$ and thus also in $O(\mathbf{s},e)$ by induction.\ Then, adding
to ${\boldsymbol{\mu}}$ all the normal addable $j$-nodes gives
${\boldsymbol{\lambda}}$ which is thus in $O(\mathbf{s},e)$.


To prove the inclusion $\mathfrak{L}(e,\mathbf{s})\supset O(\mathbf{s},e)$,
consider $\boldsymbol{\lambda}$ such that $S_{\boldsymbol{s}}%
(\boldsymbol{\lambda})\in O(\mathbf{s},e)$. By (\ref{i-chain_orbit}), there
exists at least an integer $j\in\{1,\ldots,e-1\}$ such that
$\boldsymbol{\lambda}$ contains only removable $j$-nodes.\ Then
$S_{\boldsymbol{s}}(\boldsymbol{\mu})=s_{j}\cdot S_{\boldsymbol{s}%
}(\boldsymbol{\lambda})\in O(\mathbf{s},e)$ and by the induction hypothesis,
$\boldsymbol{\mu\in}\mathfrak{L}(e,\mathbf{s})$. Then adding to
$\boldsymbol{\mu}$ all its addable $j$-nodes gives the $l$-partition
$\boldsymbol{\lambda}$ in $\mathfrak{L}(e,\mathbf{s})$.
\end{proof}

\begin{example}
Let us resume Example \ref{exaff}. Denote by $\{s_{0},s_{1},s_{2}\}$ the
simple reflections of the affine Weyl group $\widehat{W}_{3}$. We have:
\[
S_{(0,1)}(\emptyset,1)=s_{1}S_{(0,1)}(\emptyset,\emptyset),\ S_{(0,1)}%
(1,\emptyset)=s_{0}S_{(0,1)}(\emptyset,\emptyset),\ S_{(0,1)}(1.1,\emptyset
)=s_{2}.s_{0}S_{(0,1)}(\emptyset,\emptyset)
\]%
\[
S_{(0,1)}(\emptyset,2)=s_{2}s_{1}S_{(0,1)}(\emptyset,\emptyset),\ S_{(0,1)}%
(1,1.1)=s_{0}s_{1}S_{(0,1)}(\emptyset,\emptyset),\ S_{(0,1)}(2,1)=s_{1}%
s_{0}S_{(0,1)}(\emptyset,\emptyset).
\]

\end{example}

\subsection{More on $(e,\mathbf{s})$-cores}

We here point an interesting property of the set of $(e,\mathbf{s})$-cores.
Assume that $\mathbf{s}$ is an arbitrary multicharge. Following \cite{F}, for
any $l$-partition ${\boldsymbol{\lambda}}$ and for each pairs of integers
$(i,j)\in\{0,\ldots,e-1\}\times\{1,\ldots,l\}$ set%
\[
b_{i,j}^{\mathbf{s}}({\boldsymbol{\lambda}}):=\operatorname{max}(\beta\in
S_{s_{j}}(\lambda^{j})|\beta\equiv i\operatorname{mod}e).
\]
Now let ${\widetilde{\mathbf{s}}}=(\widetilde{s}_{1},\ldots,\widetilde{s}%
_{l})\in\{0,1,\ldots,e-1\}^{l}$ be such that $\mathbf{s}\equiv{\widetilde
{\mathbf{s}}\operatorname{mod}e}$.

\begin{proposition}
For any ${\boldsymbol{\lambda}}\in O(\mathbf{s},e)$ and any $i\in
\{0,1,\ldots,e-1\}$ there exists an integer $\gamma_{i}$ such that
$b_{i,j}^{\widetilde{\mathbf{s}}}({\boldsymbol{\lambda}})\in\{\gamma
_{i},\gamma_{i}+e\}$ for all $j\in\{1,\ldots,l\}$.
\end{proposition}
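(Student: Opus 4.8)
The plan is to translate the defining inclusions of an $(e,\mathbf{s})$-core into inequalities between the ``top beads'' $b_{i,j}^{\widetilde{\mathbf{s}}}(\boldsymbol{\lambda})$ sitting on a fixed runner, and then to observe that these inequalities confine all of them to an interval of length $e$.

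First I would record the two facts that make the runners behave well. By Proposition~\ref{Th_Orbit_affine}, the condition $\boldsymbol{\lambda}\in O(\mathbf{s},e)$ means exactly that $\boldsymbol{\lambda}$ is an $(e,\mathbf{s})$-core, and then each component $\lambda^{j}$ is an $e$-core (this is immediate from the defining inclusions together with Proposition~\ref{Prop_CharCore}; cf.\ the first Remark following the Definition of $(e,\mathbf{s})$-core). Hence, for any charge $s$, the $i$-runner of $S_{s}(\lambda^{j})$ is downward closed, so it is determined by its largest element; consequently, for $e$-cores $\mu,\nu$ and any charges $s,t$ one has $L_{s}(\mu)\subset L_{t}(\nu)$ if and only if $\max(\beta\in S_{s}(\mu)\mid\beta\equiv i\operatorname{mod}e)\leq\max(\beta\in S_{t}(\nu)\mid\beta\equiv i\operatorname{mod}e)$ for every $i\in\{0,\ldots,e-1\}$. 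I will also use that replacing a charge $s$ by $s+ke$ translates the whole symbol, hence each such maximum, by $ke$.

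Now fix $i\in\{0,\ldots,e-1\}$ and write $x_{j}:=b_{i,j}^{\widetilde{\mathbf{s}}}(\boldsymbol{\lambda})=\max(\beta\in S_{\widetilde{s}_{j}}(\lambda^{j})\mid\beta\equiv i\operatorname{mod}e)$, so that $x_{j}\equiv i\operatorname{mod}e$ for all $j$. The case $l=1$ being trivial, assume $l\geq2$ and fix $1\leq a<b\leq l$. By the definition of $(e,\mathbf{s})$-core (with the conventions of the proof of Lemma~\ref{iso}) there are integers $s_{a}'\equiv\widetilde{s}_{a}$ and $s_{b}'\equiv\widetilde{s}_{b}\operatorname{mod}e$ with $0\leq s_{b}'-s_{a}'<e$ and $L_{s_{a}'}(\lambda^{a})\subset L_{s_{b}'}(\lambda^{b})\subset L_{s_{a}'+e}(\lambda^{a})$. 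Writing $s_{a}'=\widetilde{s}_{a}+r_{a}e$ and $s_{b}'=\widetilde{s}_{b}+r_{b}e$ and applying the runner comparison above on the $i$-runner, this chain of inclusions becomes
\[
x_{a}+r_{a}e\ \leq\ x_{b}+r_{b}e\ \leq\ x_{a}+(r_{a}+1)e .
\]
Since $\widetilde{s}_{a},\widetilde{s}_{b}\in\{0,\ldots,e-1\}$ we have $\widetilde{s}_{b}-\widetilde{s}_{a}\in(-e,e)$, so the identity $(r_{b}-r_{a})e=(s_{b}'-s_{a}')-(\widetilde{s}_{b}-\widetilde{s}_{a})$ combined with $0\leq s_{b}'-s_{a}'<e$ forces $r_{b}-r_{a}\in\{0,1\}$. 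In either case the displayed inequalities give $x_{b}-x_{a}\in\{(r_{a}-r_{b})e,\,(r_{a}-r_{b}+1)e\}\subset\{-e,0,e\}$, and therefore $|x_{a}-x_{b}|\leq e$.

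So any two of the integers $x_{1},\ldots,x_{l}$ differ by at most $e$, whence $\max_{j}x_{j}-\min_{j}x_{j}\leq e$; since all $x_{j}$ are congruent to $i$ modulo $e$ this difference equals $0$ or $e$, and hence $\{x_{1},\ldots,x_{l}\}\subseteq\{\gamma_{i},\gamma_{i}+e\}$ with $\gamma_{i}:=\min_{j}x_{j}$, which is the assertion. The only point that is not a formal manipulation is the bookkeeping of the preceding paragraph: one must carefully pass from the pair-dependent charges $s_{a}',s_{b}'$ occurring in the definition of an $(e,\mathbf{s})$-core to the single reduced multicharge $\widetilde{\mathbf{s}}$, and check that the resulting shifts satisfy $r_{b}-r_{a}\in\{0,1\}$; granting this, the whole statement is a direct reading of the defining inclusions.
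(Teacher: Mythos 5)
Your proof is correct and follows essentially the same route as the paper's: both reduce to showing that for each pair of components the defining abacus inclusions force the top beads of the $i$-runner to satisfy $x_a\leq x_b\leq x_a+e$ (up to swapping), and then conclude via the congruence modulo $e$. The only difference is presentational — the paper invokes the inclusions directly with the reduced charges $\widetilde{s}_j,\widetilde{s}_k$, whereas you re-derive them from the pair-dependent representatives $s_a',s_b'$ by tracking the shifts $r_a,r_b$; this extra bookkeeping is sound.
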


\begin{proof}
Fix $i\in\{0,1,\ldots,e-1\}$. To prove the proposition, it suffices to show
that for all $(j,k)\in\{1,\ldots,l\}^{2}$, we have that $|b_{i,j}
^{\widetilde{\mathbf{s}}}({\boldsymbol{\lambda}})-b_{i,k}^{\widetilde
{\mathbf{s}}}({\boldsymbol{\lambda}})|\in\{0,e\}$. Assume first that
$\widetilde{s}_{j}\leq\widetilde{s}_{k}$. In this case, since
${\boldsymbol{\lambda}}\in O(\mathbf{s},e)$, we have $L_{\widetilde{s}_{j}%
}(\lambda^{j})\subset L_{\widetilde{s}_{k}}(\lambda^{k})\subset L_{\widetilde
{s}_{j}+e}(\lambda^{j})$ and this implies that $b_{i,k}^{\widetilde
{\mathbf{s}} }({\boldsymbol{\lambda}})-b_{i,j}^{\widetilde{\mathbf{s}}}
({\boldsymbol{\lambda}})\in\{0,e\}$. If we have now $\widetilde{s}_{j}
\geq\widetilde{s}_{k}$, we thus have $\widetilde{s}_{j}\leq\widetilde{s}
_{k}+e$ and we get $L_{\widetilde{s}_{j}}(\lambda^{j})\subset L_{\widetilde
{s}_{k}+e}(\lambda^{k})\subset L_{\widetilde{s}_{j}+e}(\lambda^{j})$. This
implies now that $b_{i,j}^{\widetilde{\mathbf{s}}}({\boldsymbol{\lambda}%
})-b_{i,k}^{\widetilde{\mathbf{s}}}({\boldsymbol{\lambda}})\in\{0,e\}$ as desired.
\end{proof}

\bigskip

It follows from the previous proposition and results in \cite[Th. 3.1]{F} that
the $(e,\mathbf{s})$-cores parametrize distinguished elements of certain
remarkable blocks for Ariki-Koike algebras which may be seen as analogues of
simple blocks for Iwahori-Hecke algebras. These elements give in fact  analogues of the $e$-cores 
 in the context of Ariki-Koike algebras, this will be developed in  \cite{JLw}.

\subsection{Strong Bruhat order on Keys}

Consider $S_{\boldsymbol{s}}(\boldsymbol{\lambda})\in O(\mathbf{s},e)$ and
$S_{\boldsymbol{s}}(\boldsymbol{\mu})\in O(\mathbf{s},e)$. Let $u$ and $v$ be
the elements in $W^{\Lambda_{\boldsymbol{s}}}$ such that $S_{\boldsymbol{s}%
}(\boldsymbol{\lambda})=u\cdot S_{\boldsymbol{s}}(\boldsymbol{\emptyset})$ and
$S_{\boldsymbol{s}}(\boldsymbol{\mu})=v\cdot S_{\boldsymbol{s}}%
(\boldsymbol{\emptyset})$, respectively. Recall we have written $\subseteq$
for the inclusion order on partitions. Since the higher level Uglov
$\widehat{\mathfrak{sl}}_{e}$-crystal structure on the set of symbols is not a
tensor product of level $1$ $\widehat{\mathfrak{sl}}_{e}$-crystals, we cannot
directly use the results of Section \ref{Sec_Th_fund}.\ Nevertheless, we can
get the following description of the strong Bruhat order on $O(\mathbf{s},e)$.

\begin{proposition}
\label{Prop_SBO_lpartitions}With the previous notation, we have
$u\trianglelefteq v$ if and only of $\lambda^{k}\subseteq\mu^{k}$ for any
$k=1,\ldots,l$.
\end{proposition}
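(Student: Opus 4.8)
The plan is to reduce the statement to the level-one situation (Corollary \ref{Cor_core}), where we already know that the strong Bruhat order on $W^{\omega_s}$ is the inclusion order on $e$-cores, and then to propagate this through the elementary isomorphisms $\Phi^e_{\mathbf{s}\rightarrow(k,k+1)\cdot\mathbf{s}}$ and $\Phi^e_{\mathbf{s}\rightarrow\tau\cdot\mathbf{s}}$ of \S\ref{SubsecOrbHwv}. First I would reduce to the normalized case $0\le s_1\le\cdots\le s_l<e$: by Lemma \ref{iso}, each of the two elementary isomorphisms acts on $(e,\mathbf{s})$-cores simply by permuting the components $\lambda^k$ (for the transposition) or cyclically shifting them (for $\tau$), and it is a crystal isomorphism, hence commutes with the $W$-action and therefore preserves the strong Bruhat order on the orbit (this is the key point: an isomorphism of crystals sends $b_{w\lambda}$ to $b_{w\mu}$, so it induces the identity on $W^{\Lambda_{\mathbf{s}}}$ under the parametrization). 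Since these isomorphisms also permute the partitions $\lambda^k$ in the obvious way, both sides of the claimed equivalence transform compatibly, and it suffices to prove the proposition when $\mathbf{s}$ is non-decreasing with entries in $\{0,\dots,e-1\}$.

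In the normalized case I would argue by induction on $l$ using the tensor product structure, but being careful that the Uglov $\widehat{\mathfrak{sl}}_e$-crystal on symbols is \emph{not} a tensor product of level-one $\widehat{\mathfrak{sl}}_e$-crystals. The trick is to pass instead to the $\widehat{\mathfrak{sl}}_\infty$-crystal structure $B^\infty(S_{\boldsymbol{s}}(\boldsymbol{\emptyset}))$, which \emph{is} (by construction via the words $W_j$, read successively through the runners) the tensor product $B^\infty_{s_l}(\emptyset)\otimes\cdots\otimes B^\infty_{s_1}(\emptyset)$ of level-one $\mathfrak{sl}_\infty$-crystals, each isomorphic to $B(\omega^\infty_{s_k})$. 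The compatibility noted at the end of \S\ref{SubsecOrbHwv} — every $i$-arrow of $B(S_{\boldsymbol{s}}(\boldsymbol{\emptyset}))$ is a $j$-arrow of $B^\infty(S_{\boldsymbol{s}}(\boldsymbol{\emptyset}))$ with $j\equiv i\bmod e$ — shows that the orbit $O(\mathbf{s},e)$ of the affine Weyl group is contained in the orbit of the $\mathfrak{sl}_\infty$-Weyl group, and on that larger orbit the strong Bruhat order (for the $\mathfrak{sl}_\infty$ Coxeter structure) is, by Corollary \ref{Cor_inclus_columns} applied to $\mathcal{S}=(\omega^\infty_{s_l},\dots,\omega^\infty_{s_1})$, exactly the componentwise inclusion order on the half-infinite columns $S_{s_k}(\lambda^k)$, i.e. on the partitions $\lambda^k$. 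Then I would invoke the analogue of \eqref{proj}: since $W^{\text{aff}}\subset W^{\infty}$ compatibly on this orbit, $u\trianglelefteq v$ in the affine Weyl group forces the corresponding $\mathfrak{sl}_\infty$-elements to be comparable, giving $\lambda^k\subseteq\mu^k$ for all $k$; and conversely, if $\lambda^k\subseteq\mu^k$ for all $k$, I build an explicit reduced word realizing $u$ as a subword of $v$ \emph{within the affine orbit} by the standard argument of \S\ref{subsub_reduc_strongBO} — choosing a saturated chain in $O(\mathbf{s},e)$ from $\boldsymbol{\emptyset}$ up to $\boldsymbol{\mu}$ that passes through $\boldsymbol{\lambda}$ (possible because $O(\mathbf{s},e)=\mathfrak{L}(e,\mathbf{s})$ by Proposition \ref{Th_Orbit_affine} is a graded poset under inclusion and the arrows $b_{w\Lambda}\dashrightarrow b_{s_iw\Lambda}$ realize the covers), reading off the labels as a reduced expression for $v$, and extracting the subword up to $\boldsymbol{\lambda}$.

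The main obstacle will be the converse direction: verifying that componentwise inclusion $\lambda^k\subseteq\mu^k$ really does give a strong-Bruhat relation \emph{for the affine Weyl group} and not merely for $\mathfrak{sl}_\infty$. The delicate step is to check that the Hasse diagram of $(\mathfrak{L}(e,\mathbf{s}),\subseteq)$ coincides with the cover relation $b_{w\Lambda}\dashrightarrow b_{s_i w\Lambda}$, $\ell(s_iw)=\ell(w)+1$, coming from the affine crystal — equivalently, that adding a single box to an $(e,\mathbf{s})$-core that stays an $(e,\mathbf{s})$-core corresponds to multiplying $w$ by one simple affine reflection on the left, with the chain condition (each covering step increases rank by $1$) matching up. This amounts to analyzing, on the abacus $L_{\boldsymbol{s}}(\boldsymbol{\lambda})$ with $\boldsymbol{\lambda}\in\mathfrak{L}(e,\mathbf{s})$, the effect of the reflection $s_j$ which, by the proof of Proposition \ref{Th_Orbit_affine}, simultaneously adds all normal addable $j$-nodes; one must show this adds exactly one box at a time when we restrict to covers, or rather decompose it correctly. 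I expect this to go through by the same bead-sliding bookkeeping used in Lemma \ref{rem} and the proof of Proposition \ref{Th_Orbit_affine}, combined with the fact (Lemma \ref{Lem_SBOCosets}, \eqref{proj}) that projections to parabolic quotients preserve $\trianglelefteq$, but it is where the real work lies.
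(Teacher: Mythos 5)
Your overall strategy---normalizing $\mathbf{s}$ via Lemma \ref{iso} and transferring the question to the type $A_{\infty}$ (equivalently, large finite type $A$) picture, where Corollary \ref{Cor_inclus_columns} identifies the strong Bruhat order with componentwise inclusion of the runners---is the same reduction the paper makes. But your converse direction contains a genuine error, not merely an unfinished step. You propose to choose a saturated chain in $O(\mathbf{s},e)$ from $\boldsymbol{\emptyset}$ to $\boldsymbol{\mu}$ passing through $\boldsymbol{\lambda}$, whose labels give a reduced word for $v$ admitting a reduced word for $u$ as an initial segment. Such a chain, if it existed, would establish the \emph{weak} order relation $u\leq v$, which is strictly stronger than $u\trianglelefteq v$, and it does not exist in general: the arrows $b_{w\Lambda}\dashrightarrow b_{s_{i}w\Lambda}$ do not realize the covers of the inclusion order on $(e,\mathbf{s})$-cores. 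Already in level $1$ with $e=3$ and $s=0$, take $\lambda=(1,1)$ and $\mu=(3,1)$, two $3$-cores with $\lambda\subseteq\mu$; the corresponding elements are $u=s_{2}s_{0}$ and $v=s_{2}s_{1}s_{0}$ (up to the orientation convention), and since no commutation or braid move applies to $s_{2}s_{1}s_{0}$ in $\widehat{W}_{3}$, this is the unique reduced decomposition of $v$. Hence $u\trianglelefteq v$ (take the subword in positions $1$ and $3$) but $u$ is neither a prefix nor a suffix, so $u\not\leq v$; correspondingly the only forward arrow leaving $(1,1)$ in the orbit graph goes to $(2,1,1)$, so no directed path from $\boldsymbol{\emptyset}$ to $(3,1)$ passes through $(1,1)$. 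Your chain construction therefore cannot produce the required subword.

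The forward direction as you state it is also unsupported: (\ref{proj}) and Lemma \ref{Lem_SBOCosets} concern parabolic projections inside a single Coxeter group, whereas you need the Bruhat order to be preserved under the non-parabolic embedding sending the affine simple reflection $s_{i}$ to the product of commuting transpositions $\sigma_{i}=\prod_{k}(i+ke,\,i+1+ke)$ acting on symbols of bounded rank. Isolating and proving exactly this bi-implication is the content of the paper's proof: on $l$-partitions of rank at most $N$ the $\widehat{W}_{e}$-action factors through a finite symmetric group $S_{[-m,m]}$ via $s_{i}\mapsto\sigma_{i}$, reduced decompositions are sent to reduced decompositions, and consequently $u\trianglelefteq v$ in $\widehat{W}_{e}$ if and only if $\widetilde{u}\trianglelefteq\widetilde{v}$ in $S_{[-m,m]}$. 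Both implications of the proposition then follow at once from the finite type $A$ statement (Lemma \ref{Lem_SBOCosets} together with Corollary \ref{Cor_inclus_columns}), with no chain construction in the affine orbit. You should replace your converse argument by this transfer principle, or otherwise prove directly the equivalence of the two Bruhat orders under the embedding, since that is where the actual content of the proposition lies.
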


\begin{proof}
Recall that each symbol $S_{\boldsymbol{s}}(\boldsymbol{\nu})$ can be regarded
as a vertex of the type $A_{e-1}^{(1)}$ and $A_{\infty}$ crystals
$B(S_{\boldsymbol{s}}(\boldsymbol{\emptyset}))$ and $B^{\infty}%
(S_{\boldsymbol{s}}(\boldsymbol{\emptyset}))$. Now consider the finite set
$S_{\boldsymbol{s}}^{N}(\boldsymbol{\nu})$ of symbols of $l$-partitions
$\boldsymbol{\nu}$ with rank less or equal to a fixed integer $N$,. Then,
there exists an integer $m$ such the action of any simple reflection $s_{i}%
\in\widehat{W}_{e}$ on $S_{\boldsymbol{s}}^{N}(\boldsymbol{\nu})$ coincide
with that of the permutation $\sigma_{i}:=\prod_{-m\leq k\leq m}%
(i+ke,i+1+ke)\in S_{[-m,m]}\subset W_{\infty}$ where $S_{[-m,m]}$ is the
symmetric group on the integers between $-m$ and $m$.\ More generally, the
action of $w\in\widehat{W}_{e}$ with minimal decomposition $w=s_{i_{1}}\cdots
s_{i_{a}},$ on $S_{\boldsymbol{s}}^{N}(\boldsymbol{\nu})$ will coincide with
that of $\widetilde{w}=\sigma_{i_{1}}\cdots\sigma_{i_{a}}\in S_{[-m,m]}$ and
$\sigma_{i_{1}}\cdots\sigma_{i_{a}}$ is also a minimal decomposition of
$\widetilde{w}$.\ By definition of the strong Bruhat order we have
$u\trianglelefteq v$ in $\widehat{W}_{e}$ if and only if $\widetilde
{u}\trianglelefteq\widetilde{v}$ in $S_{[-m,m]}$.\ Therefore, we are reduced
to the finite type $A$ for which the strong Bruhat order of level $l$ is the
product of $l$ strong Bruhat orders of level $1$ as observed in \S \ref{ftyA}.
\end{proof}

\subsection{Kleshchev realization and computation of the Keys}

\label{SubsecKles}As we have explained in Section \ref{Sec_Th_fund}, the
computation of the Keys for an element in $B(\Lambda_{\mathbf{s}})$ can be
reduced to the computation of the Keys for the crystals $B(\omega_{s_{j}})$
associated to the fundamental highest weights once the orbit of the highest
weight vertex in $B(\Lambda_{\mathbf{s}})$ and the combinatorial $R$-matrices
between fundamental crystals can be described. However, to do this the crystal
$B(\Lambda_{\mathbf{s}})$ should be realized as a connected component in a
tensor product of level $1$ crystals. Unfortunately, this is not the case for
the Uglov realization when $e$ is finite. The realization relevant in order to
use Corollary \ref{Cor_fund} is the Kleshchev one (see for example
\cite[\S 6.2.16]{GJ}). The vertices of the associated crystal are called the
Kleshchev multipartitions. They have, in principle, a non trivial inductive
definition but an elementary characterization of them has been recently given
in \cite{Jn}.

Fortunately, Uglov and Kleshchev realizations can be easily connected. In
particular, one can deduce from the above results that the characterization of
the multipartitions in the orbit of the empty multipartition are the same in
Kleshchev and Uglov realizations.

\begin{proposition}
\label{Th_Orbit_affineK}Let $\mathbf{s}\in\mathbb{Z}^{l}.$ In the Kleshchev
realization of $B(\Lambda_{\mathbf{s}})$, a $l$-partition
${\boldsymbol{\lambda}}$ yields a symbol in the orbit of the empty
$l$-partition if and only it is a $(e,\mathbf{s})$-core. In particular, when
$l>1$ and $0\leq s_{1}\leq\cdots\leq s_{l}<e$, the symbols in this orbit are
exactly those such that $L_{s_{a}}(\lambda^{a})\subset L_{s_{a+1}}(\lambda
^{b})\subset L_{s_{a}+e}(\lambda^{a})$ for any $1\leq a<l$%
.\footnote{Therefore, ${\boldsymbol{\lambda}}$ is a $(e,\mathbf{s})$-core if
and only if for any $1\leq a\leq l$, $\lambda^{a}$ is a $e$-core and for any
$1\leq a<l-1$, $L_{s_{a}}(\lambda^{a})\subset L_{s_{a+1}}(\lambda^{b})\subset
L_{s_{a}+e}(\lambda^{a})$.}
\end{proposition}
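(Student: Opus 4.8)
The plan is to transfer the description of the orbit from the Uglov realization (Proposition~\ref{Th_Orbit_affine}) to the Kleshchev one via the crystal isomorphism between the two. Both the Uglov crystal and the Kleshchev crystal are realizations of the abstract crystal $B(\Lambda_{\mathbf{s}})$, so there is a unique crystal isomorphism $\Psi$ between them sending the empty $l$-partition to the empty $l$-partition. Since the $W$-action on a crystal is defined purely from the crystal structure (reflection along the $i$-chains), $\Psi$ commutes with $W$; hence it restricts to a bijection from the orbit $O(\mathbf{s},e)$ of $\boldsymbol{\emptyset}$ in the Uglov crystal onto the orbit of $\boldsymbol{\emptyset}$ in the Kleshchev crystal, with $\Psi(b_{w\Lambda_{\mathbf{s}}})=b_{w\Lambda_{\mathbf{s}}}$ for all $w$. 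It therefore suffices to show that this bijection is the identity on $l$-partitions, that is, that the $l$-partition labelling $b_{w\Lambda_{\mathbf{s}}}$ is the same in both realizations; Proposition~\ref{Th_Orbit_affineK} then follows from Proposition~\ref{Th_Orbit_affine}, and its last assertion from the Remark following the definition of $(e,\mathbf{s})$-cores.

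I would prove this by induction on $\ell(w)$, $w\in W^{\Lambda_{\mathbf{s}}}$, the case $w=\mathrm{id}$ being clear. For $\ell(w)>0$, peeling a simple reflection off a reduced expression of $w$ and projecting back to $W^{\Lambda_{\mathbf{s}}}$, we may by (\ref{i-chain_orbit}) write $b_{w\Lambda_{\mathbf{s}}}=\tilde{f}_{i}^{\,\varphi_{i}(b')}(b')$ for some $i\in I$ and some orbit element $b'$ with $\varepsilon_{i}(b')=0$ and $b'=b_{u\Lambda_{\mathbf{s}}}$, $u\in W^{\Lambda_{\mathbf{s}}}$, $\ell(u)<\ell(w)$. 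By the induction hypothesis $b'$ is the same $l$-partition $\boldsymbol{\mu}$ in both realizations, and $\boldsymbol{\mu}$ is an $(e,\mathbf{s})$-core by Proposition~\ref{Th_Orbit_affine}. Since $\varepsilon_{i}(\boldsymbol{\mu})=0$, Lemma~\ref{rem} shows $\boldsymbol{\mu}$ has no removable $i$-node: otherwise it would have no addable $i$-node, its $\{A,R\}$-word would be a power $R^{r}$ with $r>0$, forcing $\varepsilon_{i}(\boldsymbol{\mu})=r>0$. The key point is then a realization-independent description of $\tilde{f}_{i}^{\,\varphi_{i}}$ on such a $\boldsymbol{\mu}$: in both realizations $\tilde{f}_{i}$ adds exactly one $i$-node; $\varphi_{i}(\boldsymbol{\mu})$ equals the number of addable $i$-nodes of $\boldsymbol{\mu}$ (the $\{A,R\}$-word being a power of $A$); and adding an $i$-node to a Young diagram neither creates a new addable $i$-node nor affects the addability of any other $i$-node (an $i$-node added at the end of a row only changes the status of nodes of residues $i\pm 1$ in that component). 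Consequently $\tilde{f}_{i}^{\,\varphi_{i}}(\boldsymbol{\mu})$ is obtained from $\boldsymbol{\mu}$ by adding \emph{all} of its addable $i$-nodes, independently of whether one works in the Uglov or in the Kleshchev crystal; this identifies $b_{w\Lambda_{\mathbf{s}}}$ as the same $l$-partition in both, closing the induction.

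I expect the main obstacle to be exactly this realization-independence step. The Uglov and Kleshchev crystal operators $\tilde{f}_{i}$ differ in general --- they read the $\{A,R\}$-word in different orders and hence may select different \textquotedblleft normal\textquotedblright\ nodes --- so the argument genuinely relies on the fact that on an $(e,\mathbf{s})$-core no residue simultaneously carries an addable and a removable node (Lemma~\ref{rem}), which renders the relevant $\{A,R\}$-words unmixed and makes the power $\tilde{f}_{i}^{\,\varphi_{i}}$ (and likewise $\tilde{e}_{i}^{\,\varepsilon_{i}}$) forced, hence the same in both models. The auxiliary combinatorial facts about adding $i$-nodes to Young diagrams should be recorded but are routine, and the last assertion of the proposition, under the hypothesis $0\le s_{1}\le\cdots\le s_{l}<e$, is immediate from the corresponding statement in Proposition~\ref{Th_Orbit_affine}. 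One could alternatively try to exploit that the Kleshchev crystal \emph{is} a tensor product of level $1$ affine crystals and apply Corollary~\ref{Cor_inclus_columns} with $\mathcal{S}=(\omega_{s_{1}},\dots,\omega_{s_{l}})$ (indices mod $e$) together with Corollary~\ref{Cor_core}; but pinning down which $l$-tuples of $e$-cores occur in the principal component seems to require the same analysis, so I would favour the transfer argument above.
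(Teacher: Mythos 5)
Your proof is correct, but it takes a genuinely different route from the paper's. The paper's proof is a two-line reduction: it invokes the fact (from \cite{GJ}, \S 6.2.16) that for multipartitions of rank at most $n$ the Kleshchev realization for $\mathbf{s}$ coincides, as a labelled crystal, with the Uglov realization for a sufficiently spread-out multicharge $\mathbf{t}$ with $\mathbf{t}\equiv\mathbf{s}\operatorname{mod}e$; since the notion of $(e,\mathbf{t})$-core only depends on the reduced residues ($\mathbf{t}'=\mathbf{s}'$), Proposition \ref{Th_Orbit_affine} applied to $\mathbf{t}$ immediately gives the statement. You instead compare the Uglov and Kleshchev realizations for the \emph{same} multicharge $\mathbf{s}$ via the abstract crystal isomorphism, and prove by induction on $\ell(w)$ that the orbit vertex $b_{w\Lambda_{\mathbf{s}}}$ carries the same $l$-partition label in both models, the key point being that on an $(e,\mathbf{s})$-core no residue carries both addable and removable nodes (Lemma \ref{rem}), so the $\{A,R\}$-word is unmixed, $\varphi_i$ counts all addable $i$-nodes, and $\tilde f_i^{\varphi_i}$ adds all of them irrespective of the reading order that distinguishes the two signature rules. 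This is sound (and your auxiliary observation that adding an $i$-node changes the status of no other $i$-node is the right thing to record), and it is essentially the same mechanism the paper uses inside the proof of Proposition \ref{Th_Orbit_affine} itself, now run in the Kleshchev model. What your argument buys is self-containedness: it avoids the external comparison theorem from \cite{GJ} and shows directly that the Uglov-to-Kleshchev isomorphism is the identity on the orbit labels. What the paper's argument buys is brevity and the bonus that the two realizations agree not just on the orbit but on the whole bounded-rank subcrystal, which it reuses elsewhere (e.g.\ for the $R$-matrix computation in \S \ref{SubsecKles}). Your dismissal of the alternative via Corollary \ref{Cor_inclus_columns} is also reasonable, since identifying which $l$-tuples of $e$-cores occur in the principal component would indeed require the same analysis.
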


\begin{proof}
Let $n\in\mathbb{Z}_{>0}$. Take $\mathbf{t}=(t_{1},\ldots,t_{l})\in
\mathbb{Z}^{l}$ such that $t_{j}\equiv s_{j}(\text{mod }e)$ for all
$j=1,\ldots,l$ and such that $t_{j}-t_{1}\geq n+e$ for all $j=2,\ldots,l$. By
\cite[\S 6.2.16]{GJ}, the subcrystals containing the multipartitions of rank
less than $n$ in the Kleshchev realization for the multicharge $\mathbf{s}$
and in the Uglov realization for the multicharge $\mathbf{t}$ coincide. Thus
we can conclude \ by using the fact that $\mathbf{t}^{\prime}=\mathbf{s}%
^{\prime}$.
\end{proof}


Now the second crucial ingredient in our procedure for computing the key by
reduction to the fundamental weights as prescribed by \ref{Cor_fund} is the
combinatorial $R$-matrix associated to a pair of fundamental weights. It
corresponds to a transposition $(k,k+1)$ in the multicharge $\mathbf{s}$ for
the Kleshchev realization of crystals (the rank of the multipartitions being
fixed). Since only the components $k$ and $k+1$ are affected by this
$R$-matrix, we are reduced to the the case where $\mathbf{s}=(s_{1},s_{2})$
and $k=1$.

\noindent Let $\mathbf{v}=(v_{1},v_{2})\in\mathbb{Z}^{2}$ be such that $0\leq
v_{1}\leq v_{2}<e$ and $v_{j}\equiv s_{j}(\text{mod }e)$ for $j=1,2$. Then the
subcrystal containing the multipartitions of rank less than $n$ in the
Kleshchev realization for $\mathbf{s}=(s_{1},s_{2})$ coincides with that in
the Uglov realization for the multicharge $\mathbf{v}^{>}:=(v_{1},v_{2}+ke)$
where $k\in\mathbb{N}$ is such that $k.e>n+e$. The desired $R$-matrix thus
corresponds to a crystal isomorphism between the crystal associated to the
multicharge $\mathbf{v}^{>}$ and the crystal associated to the multicharge
$(v_{2},v_{1}+ke)$. This isomorphism can be computed on the bipartition
$(\lambda^{1},\lambda^{2})$ by composing the crystal isomorphisms described in
\S \ref{SubsecOrbHwv} as follows.

\begin{enumerate}
\item First apply the crystal isomorphism $\Phi_{(v_{1},v_{2}+ke)\rightarrow
(v_{2}+ke,v_{1}+e)}^{e}$ which exchanges the two components of the
bipartition, that is exchanges the two rows in the symbols and next translates
the bottom one by $e$.

\item Apply the crystal isomorphism $\Phi_{(v_{2}+ke,v_{1}+e)\rightarrow
(v_{1}+e,v_{2}+k.e)}^{e}$ which reduces to a \textquotedblleft Jeu de
taquin\textquotedblright\ switching the lengths of the two rows in the symbols.

\item Repeat the two previous steps $2k$ times to get the image of
$(\lambda^{1},\lambda^{2})$ in the crystal with multicharge $(v_{1}%
+2e,v_{2}+k.e)$.\ 

\item Finally, apply one more isomorphism $\Phi_{(v_{1}+2ke,v_{2}%
+ke)\rightarrow(v_{2}+ke,v_{1}+2ke+e)}^{e}$ and use the fact that the
isomorphism between the crystals with multicharge $(v_{2}+ke,v_{1}+2ke+e)$ in
the Uglov realization and $(s_{2},s_{1})$ in the Kleshchev realization is trivial.
\end{enumerate}

\begin{remark}
\label{Rem_keyUglov} \ The crystal isomorphism between the Uglov and Kleshchev
realizations of $B(\Lambda_{\mathbf{s}})$ can also be obtained from the
results in \cite{JL} although it is not easy to make explicit. By 2 of Remark
\ref{Rq_impo}, we so obtain a characterization of the Demazure crystals in the
Uglov realization. Nevertheless, we can just use the Kleshchev realization in
which is the orbit of the highest weight and the relevant combinatorial
$R$-matrices are easy to describe.
\end{remark}

\subsection{Generalization of the Young Lattice}

When $e=\infty,$ $l=1$ and $\mathbf{s}=(0),$ the orbit $O(\mathbf{s},e)$
coincides with the whole crystal $B(S_{\boldsymbol{s}}(\boldsymbol{\emptyset
}))$. By forgetting the labels $i$ of the arrows in $B(S_{\boldsymbol{s}%
}(\boldsymbol{\emptyset}))$, one then recovers the Young lattice $\mathcal{Y}$
of partitions which is strongly connected with the combinatorics of Schur
functions. This lattice admits an interesting generalization $\mathcal{Y}%
_{e-1}$ where the ordinary partitions are replaced by the $e$-cores (or by the
$k$ bounded partitions with $k=e-1$) connected this times with the
combinatorics of $k$-Schur functions (see \cite{LLMSSZ}).\ The graph
$\mathcal{Y}_{e-1}$ corresponds to the Hasse diagram of the orbit
$O(\mathbf{s},e)$ when $l=1$ and $\mathbf{s}=(0)$ and we have an arrow
$\lambda\rightarrow\mu$ between the two $e$-cores $\lambda$ and $\mu$ when
$\mu$ is obtained by adding all the possible addable $i$-nodes in $\lambda$
corresponding to a fixed $i\in\{0,\ldots,e-1\}$. When $l>1$, the notion of
$(e,\mathbf{s})$-core yields generalizations of the graph $\mathcal{Y}_{e-1}$
whose structure is obtained similarly from the orbit $O(\mathbf{s},e)$.\ It is
a natural question to ask whether its combinatorial properties (for $e$ finite
or not) can also be encoded in the combinatorics of a distinguished basis in a
polynomial algebra analogous to $k$-Schur functions in level $1$.

\section{Demazure crystals in $B(\infty)$}

\subsection{Link with the Demazure crystals in $B(\lambda)$}

Consider $\mathfrak{g}$ a Kac-Moody algebra and $\lambda$ a dominant weight
for $\mathfrak{g}$.\ We now explain how it is possible to characterize the
elements of a Demazure crystal $B(\infty)_{w}$ from the computation of keys in
the crystals $B(\lambda)_{w},\lambda\in P_{+}$.\ First recall that there
exists a unique embedding%
\[
\pi_{\lambda}:\left\{
\begin{array}
[c]{c}%
B(\lambda)\hookrightarrow B(\infty)\\
b\longmapsto\pi_{\lambda}(b)
\end{array}
\right.
\]
such that for any path $b=\tilde{f}_{i_{1}}\cdots\tilde{f}_{i_{l}}b_{\lambda}$
we have $\pi_{\lambda}(b)=\tilde{f}_{i_{1}}\cdots\tilde{f}_{i_{l}}b_{0}$ where
$b_{0}$ is the highest weight vertex of $B(\infty)$.\ Also the crystal
$B(\infty)$ is endowed with the Kashiwara involution $\ast$ and the crystal
operators have starred versions $\tilde{f}_{i}^{\ast}=\ast\circ\tilde{f}%
_{i}\circ\ast$ and $\tilde{e}_{i}^{\ast}=\ast\circ\tilde{e}_{i}\circ\ast
$.\ Thanks to the operators $\tilde{e}_{i}^{\ast}$, we get a simple
characterization of the image of $\pi_{\lambda}$. Namely, we have%
\[
\operatorname{Im}\pi_{\lambda}=\{u\in B(\infty)\mid\varepsilon^{\ast
}(u)\preceq\lambda\}
\]
where $\varepsilon^{\ast}(u)=\sum_{i\in\in I}\varepsilon_{i}^{\ast}%
(u)\omega_{i}$ and $\varepsilon^{\ast}(u)\preceq\lambda$ means that
$\lambda-\varepsilon^{\ast}(u)$ is a dominant weight.

Given any $w$ in the Weyl group $W$, we also have by Theorem \ref{Th_KL}%
\[
\pi_{\lambda}(B(\lambda)_{w})=B(\infty)_{w}\cap\operatorname{Im}\pi_{\lambda}.
\]
From the previous considerations, for deciding if a vertex $u$ belongs to
$B(\infty)_{w}$, it suffices to have a realization of $B(\lambda)$ and
$B(\infty)$ satisfying the properties below.

\begin{itemize}
\item The embedding $\pi_{\lambda}$ is easy to describe.

\item The actions of both the ordinary and $\ast$-crystal operators are explicit.

\item For any $u\in\operatorname{Im}\pi_{\lambda}$, one can compute the unique
vertex $b\in B(\lambda)$ such that $\pi_{\lambda}(b)=u$.

\item One can decide if a vertex $b$ in $B(\lambda)$ belongs to $B(\lambda
)_{w}$.
\end{itemize}

For deciding wether $u\in B(\infty)_{w}$ it then suffices to proceed as follows.

\begin{enumerate}
\item Compute $\lambda=\varepsilon^{\ast}(u)$, we get that $u\in
\operatorname{Im}\pi_{\lambda}$.

\item Determine $b\in B(\lambda)$ such that $\pi_{\lambda}(b)=u$.

\item Then, $u\in B(\infty)_{w}$ if and only if $b\in B(\lambda)_{w}$.
\end{enumerate}

\subsection{Finite, infinite and affine type $A$}

In type $A$ (finite, infinite and affine), vertices of $B(\infty)$ are
parametrized by multisegments that we now define.

\begin{definition}
A {segment} is a sequence of consecutive integers $[a,a+1,...,b]$. We denote
it by $[a;b]$. A collection (or a formal sum) of segments is called a
{multisegment}. The empty multisegment is denoted by $\boldsymbol{\emptyset}$
and we write $\mathfrak{M}$ for the set of all multisegments.
\end{definition}

For $e\in\mathbb{Z}_{\geq2}$, let us define $\mathfrak{M}_{e}$ as the subset
of $\mathfrak{M}$ of the multisegments $\mathfrak{m}$ in which each segment
$[a,b]$ is such that $1\leq a<b\leq e-1$. Also define $\mathfrak{M}%
_{e}^{\mathrm{aff}}$ as the subset of aperiodic multisegments of
$\mathfrak{M}$, that is the subset of multisegments $\mathfrak{m}%
\in\mathfrak{M}$ such that for each length $l$ there exists at least an
integer in $\{0,\ldots,e-1\}$ for which $\mathfrak{m}$ does not contain a
segment $[b-l+1,b]$ of length $l$ with $b=i\operatorname{mod}e$. It is then
known that in types $A_{e-1},A_{\infty}$ and $A_{e-1}^{(1)}$, the crystal
$B(\infty)$ has a simple realization with $\boldsymbol{\emptyset}$ as highest
weight vertex and in which the vertices are parametrized by the multisegments
in $\mathfrak{M}_{e},\mathfrak{M}$ and $\mathfrak{M}_{e}^{\mathrm{aff}}$,
respectively. Also we determined in \cite{JL2} the corresponding embedding%
\[
\Pi_{\Lambda_{\mathbf{s}}}:B(S_{\boldsymbol{s}}(\boldsymbol{\emptyset
}))\hookrightarrow B(\infty)
\]
compatible with the Uglov realization of crystals for a multicharge
$\mathbf{s}\in\mathbb{Z}^{l}$ such that $0\leq s_{1}\leq\cdots\leq s_{l}<e$
(with $s_{1}\geq1$ in type $A_{e-1}$ and $e=\infty$ in type $A_{\infty}%
$).\ The embedding $\Pi_{\Lambda_{\mathbf{s}}}$ can be described as follows.
Take ${\boldsymbol{\lambda}}$ an $l$-partition regarded as a sequence of $l$
Young diagrams. Then associate to each row $\lambda_{i}^{k}$ of
${\boldsymbol{\lambda}}$ the segment $[a;b]$ where $a=1-i+s_{k}$ and
$b=\lambda_{i}^{k}-i+s_{k}$ are the contents of the leftmost and rightmost
boxes in $\lambda_{i}^{k}$ translated by $s_{k}$, respectively. The
multisegment $\Pi_{\Lambda_{\mathbf{s}}}({\boldsymbol{\lambda}})$ is then the
formal sum of the segments associated to each non empty row of
${\boldsymbol{\lambda}}$.

\begin{example}
Consider the multicharge $\mathbf{s}:=(4,5)$ and the bipartition
$(3.2.2,3.1)$. Write the associated Young diagrams and fill each box in
$\lambda^{(k)},k\in\{1,2\}$ with its content translated by $s_{k}$:
\begin{Young}
		$4$ & $5$ & $6$ \cr
		$3$ & $4$ \cr
		$2$ & $3$ \cr
	\end{Young}\begin{Young}
		$5$ & $6$ & $7$ \cr
		$4$  \cr
	\end{Young}

Then, we have
\[
\Pi_{\Lambda_{\mathbf{s}}}=[4;6]+[3;4]+[2;3]+[5;7]+[4].
\]

\end{example}

In \cite{JL2}, we also got the action of the $\ast$-crystal operators and a
procedure to compute the minimal symbol associated to a multisegment (i.e. the
associated symbol with multicharge corresponding to $\varepsilon^{\ast
}(\mathfrak{m})$). Thus, by the previous arguments, we can use the results of
Section \ref{Sect_affineA} to decide wether a multisegment $\mathfrak{m}$
belongs to $B(\infty).$ This is direct for types $A_{e-1}$ and $A_{\infty}$
but in type $A_{e-1}^{(1)}$, one needs the characterization of the Demazure
crystals in the Uglov realization (see Remark \ref{Rem_keyUglov}) in order to
use the embedding $\Pi_{\Lambda_{\mathbf{s}}}$.

\subsection{Multisegments associated to a $(e,\boldsymbol{s})$-core.}

Given a segment $\mathfrak{m}\in\mathfrak{M}_{e}^{\mathrm{aff}}$, we now give
a direct procedure deciding whether $\mathfrak{m}\in\Pi_{\Lambda_{\mathbf{s}}%
}(O(\mathbf{s},e))$ or not\footnote{It also hold in type $A_{e-1}$ and
$A_{\infty}$ by using the relevant set of multisegments.}, that is
characterizing the image of the Key map for the Demazure crystals
$B(\infty)_{e}$. To do this, It will be convenient to write our aperiodic
multisegments by gathering segments with the same right end as follows:
\[
\mathfrak{m}=\sum_{1\leq j\leq m}\sum_{1\leq i\leq r_{j}}[a_{j}^{i},b_{j}]
\]
where $m\in\mathbb{N}$ and where, for each $1\leq j\leq m$, we have $r_{j}%
\in\mathbb{N}$. We can also assume that $b_{1}\leq\cdots\leq b_{m}$ and that
for each $1\leq j\leq m$, we have $a_{j}^{1}\leq\cdots\leq a_{j}^{r_{j}}$. Our
algorithm (illustrated by the example below) decides if $\mathfrak{m}\in
\Pi_{\Lambda_{\mathbf{s}}}(O(\mathbf{s},\infty))$ and then construct
recursively $m+1$ sequences of segments $(L_{1}^{j},\ldots,L_{l}%
^{j}),j=0,\ldots,m$ starting from $(L_{1}^{0},\ldots,L_{l}^{0})=(\emptyset
,\ldots,\emptyset)$.

\begin{itemize}
\item If $r_{m}>l$ then the algorithm stops.\ Otherwise set
\[
L_{l}^{1}=([a_{m}^{r_{m}},b_{m}]),\ldots,L_{l-r_{m}+1}^{1}=([a_{m}^{1}
,b_{m}]),L_{l-r_{m}}^{1}=\emptyset,\ldots,L_{1}^{1}=\emptyset.
\]

\item More generally, assume we have the sequence $(L_{1}^{m-j},\ldots,$
$L_{l}^{m-j})$ and consider the segments $[a_{j}^{i},b_{j}]$ for
$i=1,\ldots,r_{j}$. When $r_{j}>l$ the algorithm stops. Otherwise, set
$L_{l+r_{j}-k}^{m-j+1}=L_{l+r_{j}-k}^{m-j}$ for $r_{j}<k\leq l$ and for each
$1\leq k\leq r_{j}$, $L_{l+r_{j}-k}^{m-j+1}$ is obtained by adding the segment
$[a_{j}^{k},b_{j}]$ at the beginning of the sequence $L_{l+r_{j}-k}^{m-j}$ if
this sequence is empty or its first segment $[a,b]$ is such that $a=a_{j}
^{k}+1$ and $b>b_{j}$. If not, the algorithm stops.
\end{itemize}

At the end of the procedure either the algorithm stops before all the segments
of $\mathfrak{m}$ have been considered and we then conclude $\mathfrak{m}%
\notin\Pi_{\Lambda_{\mathbf{s}}}(O(\mathbf{s},e)$ or we get a sequence
$(L_{1}^{m},\ldots,L_{l}^{m})$ of segments :
\[
L_{j}^{m}:=([\alpha_{1},\beta_{1}],\ldots,[\alpha_{p_{j}},\beta_{p_{j}}])
\]
Then we consider the symbol
\[
S_{j}=%
\begin{tabular}
[c]{p{0.5cm}p{0.5cm}p{0.5cm}p{0.5cm}p{0.5cm}p{0.5cm}p{0.5cm}}%
$\alpha_{1}$ & $\alpha_{2}$ & $\ldots$ & $\alpha_{p}$ &  &  &
\end{tabular}
\
\]
We have $\mathfrak{m}\in\Pi_{\Lambda_{\mathbf{s}}}(O(\mathbf{s},e))$ if and
only if for all $(i,j)\in\{1,\ldots,l\}^{2}$ we have $s_{i}-s_{j}=p_{i}-p_{j}%
$. Moreover, it is easy to see that the symbol we have constructed is nothing
but the symbol associated to $\Pi_{\Lambda_{\mathbf{s}}}^{-1}(\mathfrak{m})$.

More generally this algorithm shows when there exists $\mathbf{s}\in
\mathbb{Z}^{l}$ such that $\mathfrak{m}\in\Pi_{\Lambda_{\mathbf{s}}%
}(O(\mathbf{s},e))$. The proof of the the rightness of the algorithm is
straightforward. The algorithm simply construct if possible the symbol of a
multipartition which satisfies all the properties of being in $\Pi
_{\Lambda_{\mathbf{s}}}(O(\mathbf{s},e))$.

\begin{example}
Assume $e=\infty$ and consider the multisegment%
\[
\lbrack2]+[3]+[2,3]+[2,3]+[4]+[3,4]+[5,6]+[6,7]+[4,7]+[7,9]+[5,9]+[3,9]
\]
We take $\mathbf{s}=(0,2,4)$.

\begin{itemize}
\item We start with the segments $[7,9]$, $[5,9]$ and $[3,9]$ and we get
$L_{1}^{1}=([3,9])$, $L_{2}^{1}=([5,9])$ and $L_{3}^{1}=([7,9])$.

\item We then take the segments $[6,7]$ and $[4,7]$ and we get $L_{1}
^{2}=([3,9])$, $L_{2}^{2}=([4,7],[5,9])$ and $L_{3}^{2}=([6,7],[7,9])$.

\item We then take the segments $[5,6]$ and we get $L_{1}^{3}=([3,9])$,
$L_{2}^{3}=([4,7],[5,9])$ and $L_{3}^{3}=([5,6],[6,7],[7,9])$.

\item We then take the segments $[4]$ and $[3,4]$ and we get $L_{1}
^{4}=([3,9])$, $L_{2}^{4}=([3,4],[4,7],[5,9])$ and $L_{3}^{4}
=([4],[5,6],[6,7],[7,9])$.

\item We then take the segments $[3]$, $[2,3]$ and $[2,3]$ and we get
$L_{1}^{5}=([2,3],[3,9])$, $L_{2}^{5}=([2,3],[3,4],[4,7],[5,9])$ and
$L_{3}^{5}=([3],[4],[5,6],[6,7],[7,9])$.

\item We finally take the segment $[2]$ and we get $L_{1}^{6}=([2,3],[3,9])$,
$L_{2}^{6}=([2,3],[3,4],[4,7],[5,9])$ and $L_{3}^{6}
=([2],[3],[4],[5,6],[6,7],[7,9])$.
\end{itemize}

We see that all the properties are satisfied and thus that $\mathfrak{m}
\in\Pi_{\Lambda_{\mathbf{s}}} ( O(\mathbf{s},\infty))$, the associated
$3$-partition is
\[
(7.2,5.4.2.2,3.2.2.1.1.1).
\]

\end{example}


\begin{thebibliography}{99}                                                                                               %


\bibitem {A}S. Ariki, V. Kreiman, S. Tsuchoika, On the product of two basic
representations of $U_{v}(\widehat{\mathfrak{sl}}_{e})$, Adv. in Math. 218,
28-86, 2008.

\bibitem {BB}A. Bjorner and F. Brenti, \emph{Combinatorics of Coxeter groups},
Graduate Text in Math. Springer, 2005.

\bibitem {Bour}N. Bourbaki. \emph{Groupes et alg\`{e}bres de Lie}, Chapitres
4,5 et 6, Hermann, 1968.

\bibitem {B3G}B. Brubaker, V. Bucimas, D. Bump and H. Gustafsson, Colored
five-vertex models and Demazure atoms, arXiv:1902.01795, 2019.

\bibitem {BS}D. Bump and A. Schilling, \emph{Crystal bases representations and
combinatorics}, World Scientific, 2017.

\bibitem {Dem}M. Demazure, Une nouvelle formule des caract\`{e}res, Bull. Sc.
Math. 98, 163-172, 1974.

\bibitem {Deo}V.\ Deodhar, Some characterization of the Bruhat order on a
Coxeter group, Inven.\ Math.\ 39, 187-198, 1977.

\bibitem {F}{M. Fayers}, \newblock Core blocks of Ariki-Koike algebras Journal
of Algebraic Combinatorics 26, 47-81, 2007.

\bibitem {FH}{W.\ Fulton and J. Harris.} \textit{Representation theory}.
Graduate Texts in Mathematics, Springer-Verlag, 1996.

\bibitem {GJ}M. Geck and N. Jacon, \emph{Representations of Hecke algebras at
roots of unity}. Algebra and Applications, 15. Springer-Verlag London, Ltd.,
London, 2011.

\bibitem {GW}{G. Goodman and N. R Wallach}. \emph{Representation theory and
invariants of the classical groups}. Cambridge University Press, 2003.

\bibitem {hakccc}A. Henriques and J. Kamnitzer. \newblock Crystals and
coboundary categories. \newblock {\em Duke Math. J.} 132, 191-216, 2006.

\bibitem {HK}J. Hong and S.-J. Kang. \emph{Introduction to quantum groups and
crystal bases}. Graduate Studies in Mathematics, {42}, Amer. Math. Soc., 2000.

\bibitem {Jb}N. Jacon. Crystal graphs of irreducible highest weight
$U_{q}(\widehat{\mathfrak{sl}}_{e})$-modules of level two and Uglov
bipartitions, Journal of Algebraic Combinatorics 27, 143-162, 2008.

\bibitem {Jn}N. Jacon. Kleshchev multipartitions and extended Young diagrams,
Advances in Mathematics 339, 367-403, 2018.

\bibitem {JL2}N. Jacon and C. Lecouvey, Kashiwara and Zelevinsky involution in
affine type A. Pacific J. Math. 243 , 287-311, 2009.

\bibitem {JL}N. Jacon and C. Lecouvey, Crystal isomorphisms for irreducible
highest weight $\mathcal{U}_{v}(\widehat{\mathfrak{sl}}_{e})$-modules of
higher level, Algebras and Representation theory 13, 467-489, 2010.

\bibitem {JLw}N. Jacon and C. Lecouvey, Cores of Ariki-Koike algebras (in preparation).

\bibitem {KacB}V. G.\ Kac, \emph{Infinite dimensional Lie algebras}, Cambridge
University Press, third edition (1989).

\bibitem {KT}J.\ Kamnitzer and P.\ Tingley, A definition of the crystal
commutator using Kashiwara's involution, Journal of Algebraic Combinatorics
29, 261-268, 2010.

\bibitem {Kash0}M. Kashiwara, Crystal base and Littelmann's refined Demazure
character formula, Duke Math. 71, 839-858, 1993.

\bibitem {kash}M. Kashiwara, \emph{On crystal bases}, Representations of
groups (Banff, AB, 1994), 155-197, CMS Conf. Proc., 16, Amer. Math. Soc.,
Providence, RI, 1995.

\bibitem {kash2}M. Kashiwara, \emph{Bases cristallines des groupes
quantiques}, Cours sp\'{e}cialis\'{e}s de la Soc.\ Math.\ de France vol 9, 2002.

\bibitem {KN}M. Kashiwara and T. Nakashima, Crystals for the representations
of the $q$-analogues of classical Lie algebras, Journal of Algebra 165,
295-345, 1994.

\bibitem {Las1}A. Lascoux and M-P Sch\"{u}tzenberger, and Keys and standard
bases. Invariant theory and tableaux 125-144, IMA Vol. Math. Appl. 19,
Springer, New York, 1998.

\bibitem {Las2}{A.~Lascoux, Ordering the symmetric group, Algebraic
combinatorics and applications, Springer 219-231, 2001.}

\bibitem {LLMSSZ}T. Lam, L. Lapointe, J. Morse, A. Schilling, M. Shimozono and
M. Zabrocki, \emph{k-Schur functions and affine Schubert calculus}, Fields
Institute Monographs, Springer, 2014.

\bibitem {Lec2}C. Lecouvey, Schensted-type correspondences and plactic monoids
for types\textit{ }$B_{n}$ and $D_{n},$ Journal of Algebraic Combinatorics 18,
99-133, 2003.

\bibitem {LecSurv}C. Lecouvey, Combinatorics of crystal graph for the root
systems of type $A_{n},B_{n},C_{n},D_{n}$ and $G_{2}$, Combinatorial aspect of
integrable systems, 11-41, MSJ Mem., 17, Math. Soc. Japan, Tokyo, 2007.

\bibitem {Len1}C. Lenart, On the combinatorics of crystal graphs, I. Lusztig's
involution, Adv. in Math. 211, 204-243, 2007.

\bibitem {LenLub}C. Lenart and A. Lubovky, A generalization of the alcove
model and its applications, Journal of Algebraic Combinatorics 41, 751-83, 2015.

\bibitem {Lit1}P. Littelmann, A Littlewood-Richardson type rule for
symmetrizable Kac-Moody algebras, Inventiones Mathematicae 116, 329-346, 1994.

\bibitem {Lit2}P. Littelmann, Crystal graphs and Young tableaux, Journal of
Algebra 175, 65-87, 1995.

\bibitem {Luca}G. Lusztig, Canonical bases arising from quantized enveloping
algebras. J. Amer. Math. Soc. 3, 447-498, 1990.

\bibitem {Mas}S. Masson, An Explicit Construction of Type A Demazure Atoms,
Journal of Algebraic Combinatorics 29, 295-313, 2009.

\bibitem {Proct}R. Proctor and M.J. Willis, Semistandard tableaux for Demazure
characters (key polynomials) and their atoms, Eur. J. Comb. 45, 172-184, 2015.

\bibitem {San}J.M. Santos, Symplectic keys and Demazure atoms in type C, in preparation.
\end{thebibliography}
\end{document}